\documentclass[leqno]{amsart}
\usepackage{amssymb}
\usepackage{mathrsfs}
\usepackage{amsmath, amsfonts, vmargin, enumerate}
\usepackage{graphics}
\usepackage{color}
\usepackage{verbatim}

\usepackage{amsthm}
\usepackage{latexsym, bm}
\usepackage{euscript}
\usepackage{dsfont}

\input xypic
\xyoption {all}

 \makeatletter

\newtheorem{thm}{Theorem}[section]
\newtheorem{prop}[thm]{Proposition}
\newtheorem{cor}[thm]{Corollary}
\newtheorem{lem}[thm]{Lemma}
\newtheorem{defn}[thm]{Definition}
\newtheorem{rem}[thm]{Remark}
\newtheorem{example}[thm]{Example}
\newtheorem{pb}[thm]{Problem}
\newtheorem{conj}[thm]{Conjecture}

\newenvironment{rmk}{\begin{rem}\rm}{\end{rem}}

\numberwithin{equation}{section}

\newcommand{\supp}{{\rm supp\,}}
\newcommand{\bmo}{{\rm bmo}}
\newcommand{\BMO}{{\rm BMO}}
\newcommand{\h}{{\rm h}}
\newcommand{\M}{\mathcal M}
\newcommand{\N}{\mathcal N}
\newcommand{\F}{\mathcal F}

\newcommand{\R}{\mathbb{R}^d}

\newcommand{\e}{\varepsilon}
\newcommand{\T}{\mathbb{T}^d}
\newcommand{\I}{\mathbb{I}^d}
\newcommand{\Z}{\mathbb{Z}^d}


\begin{document}

\title{Mapping properties of operator-valued pseudo-differential
operators}

\author{Runlian  XIA}

\address{Laboratoire de Math{\'e}matiques, Universit{\'e} de Franche-Comt{\'e},
25030 Besan\c{c}on Cedex, France, and Instituto de Ciencias Matem{\'a}ticas, 28049 Madrid, Spain}
\email{runlian91@gmail.com}

\thanks{{\it 2000 Mathematics Subject Classification:} Primary: 46L52, 42B30. Secondary: 46L07, 47L65}

\thanks{{\it Key words:} Noncommutative $L_p$-spaces, operator-valued Hardy spaces, operator-valued Triebel-Lizorkin spaces, pseudo-differential operators, Quantum tori}

\author{Xiao XIONG}

\address{Department of Mathematics and Statistics, University of Saskatchewan, Saskatoon, Saskatchewan, S7N 5E6, Canada}
\email{ufcxx56@gmail.com}

\maketitle

\markboth{R. Xia and X. Xiong}%
{Pseudo-differential operators}

\begin{abstract}
In this paper, we investigate the mapping properties of pseudo-differential operators with operator-valued symbols. Thanks to the smooth atomic decomposition of the operator-valued Triebel-Lizorkin spaces $F_1^{\alpha,c}(\R,\M)$ obtained in our previous paper, we establish the $F_1^{\alpha,c}$-regularity of regular symbols for every $\alpha\in \mathbb{R}$, and the $F_1^{\alpha,c}$-regularity of forbidden symbols for $\alpha>0$. As applications, we obtain the same results on the usual and quantum tori.
\end{abstract}

\tableofcontents

 \setcounter{section}{-1}

\section{Introduction}

Pseudo-differential operators were first explicitly defined by Kohn-Nirenberg  \cite{Kohn-Nirenberg} and H\"{o}rmander \cite{Hormander-65} to connect singular integrals and differential operators. They can be viewed as generalizations of Fourier multipliers, i.e., those operators acting on functions on $s$, formally determined by 
$$T(e^{2\pi {\rm i} s\cdot \xi} ) =   \sigma(\xi) e^{2\pi {\rm i} s\cdot \xi}, \quad \forall\, \xi \in \R. $$
In this sense, $\sigma(\xi)$ is called the symbol of the operator $T$. If $T$ is one of those more general operators, it is characterized by its symbol $\sigma(s,\xi)$, which is now a function of $s$ as well as $\xi$, i.e., 
$$
T(e^{2\pi {\rm i} s\cdot \xi} ) =   \sigma(s, \xi) e^{2\pi {\rm i} s\cdot \xi}  . $$
 Using the inverse Fourier transform, this characterization looks like
\begin{equation}\label{charact-iF}
Tf(s)=\int_{\mathbb{R}^{d}}\sigma(s,\xi)\widehat{f}(\xi)e^{2\pi {\rm i}s\cdot \xi}d\xi.
\end{equation}  
To emphasize the role of the symbol $\sigma$, we often write $T$ as $T_\sigma$. And we call this $T_\sigma$ a pseudo-differential operator.

Here are some examples of pseudo-differential operators. If $\sigma$ is independent of the variable $s$, then we go back to the Fourier multiplier mentioned above. On the other hand, if $\sigma$ is independent of the variable $\xi$, then by \eqref{charact-iF}, we get $Tf(s) = \sigma(s) \cdot f(s)$, the pointwise multiplication. To give an example of pseudo-differential operator whose symbol is a function of both $s$ and $\xi$, we consider the partial differential operator $L= \sum_{|m|_1\leq k  }  a_m(s) D^m _s $, where $m\in \mathbb{N}_0^d$ and $|m|_1 = m_1+\cdots + m_d$. This time, by \eqref{charact-iF} again, we know that the symbol of $L$ is 
$$\sigma(s,\xi)= \sum_{|m|_1\leq k  }  a_m(s) (2\pi {\rm i} \xi)^m .$$
For a general symbol $\sigma$, $T_\sigma$ may be thought as a limit of linear combinations of operators composed by pointwise multipliers and Fourier multipliers.

The study of pseudo-differential operators connects that of partial differential operators with harmonic analysis. More precisely, the regularity of solutions of a PDE corresponds to the boundedness of the related pseudo-differential operator on some function spaces. This amounts to one of the most important problems in pseudo-differential operator theory: the mapping properties of these operators on various  function spaces. Given $n\in \mathbb{R}$ and $0\leq \delta, \rho \leq 1$, denote by $S_{\rho,\delta}^n$ the H\"{o}rmander class of symbols, consisting of all infinitely differentiable functions $\sigma: \R\times \R \rightarrow \mathbb{C}$ such that 
\begin{equation}\label{def-scaler-Hormander}
| D_{s}^{\gamma}D_{\xi}^{\beta}\sigma(s,\xi)|    \leq C_{\gamma,\beta}(1+\vert\xi\vert)^{n+\delta |\gamma |_1-\rho |\beta |_1}
\end{equation}
for all $s, \xi \in \R$. One may ask which kind of symbol classes give pseudo-differential operators that are bounded on $L_p$-spaces, Sobolev, Besov, Hardy or Triebel-Lizorkin spaces. In general, it is known that pseudo-differential operators are not necessarily bounded on the classical Hardy space $\mathcal{H}_1(\mathbb{R}^d)$, or homoegeous Besov and Triebel-Lizorkin spaces. As a result, when studying the mapping properties of pseudo-differential operators, one usually focuses on inhomogeneous function spaces, such as the local Hardy spaces $\h_p(\R)$ defined by Goldberg \cite{Goldberg1979}, or inhomogeneous Besov and Triebel-Lizorkin spaces (see Triebel \cite{Tri} and \cite{Tri1992} for the definitions). For details on these results in the classical setting, we refer to \cite{Cal-Val-1972, CM1978, Runst-1985, Bourdaud-1988, Torres-1990, Tri1992, Stein1993}.

\medskip

In the noncommutative setting, this line of research started with Connes' work \cite{Connes1980} on pseudo-differential calculus for $C^*$-dynamical systems, which intended to extend the Atiyah-Singer index theorem \cite{Atiyah-Singer} for Lie group actions on $C^*$-algebras. At that time, due to the fact that very little had been done about the analytic aspect, the work of Connes and his collaborators did not include $L_p$-estimates for parametrices and error terms. Recently, inspired by the development on noncommutative harmonic analysis, a lot of progress has been made on Fourier multiplier theory and Calder\'on-Zygmund theory on noncommutative $L_p$ spaces, thanks the efforts of many researchers \cite{Mei2007, Parcet, N-R, CXY2013, HLMP2014, JMP2014, XXX17, XXY17}.
But so far, the mapping properties of pseudo-differential operators are rarely studied.

In this paper, we consider the boundedness of noncommutative pseudo-differential operators.
Our definition of symbol classes is modelled on the classical definition by H\"{o}rmander; the idea is to consider those operator-valued functions $\sigma: \R\times \R \rightarrow \M$ satisfying \eqref{def-scaler-Hormander} with operator norms in place of absolute values of the derivatives of $\sigma$. Here $\M$ is a von Neumann algebra. If $f :\R \rightarrow L_1(\M)+\M$ is a good enough function, we can consider the action of a pseudo-differential operator with symbol $\sigma$ on this $f$. Because of the noncommutativity, we have two different actions:
\begin{equation}\label{intro-pdo-c}
T_\sigma ^cf(s)=\int_{\mathbb{R}^{d}}\sigma(s,\xi)\widehat{f}(\xi)e^{2\pi {\rm i}s\cdot \xi}d\xi
\end{equation}
and 
\begin{equation*}
T_\sigma^r f(s)=\int_{\mathbb{R}^{d}}\widehat{f}(\xi)\sigma(s,\xi) e^{2\pi {\rm i}s\cdot \xi}d\xi.
\end{equation*}
We will mainly work on the column operators $T_\sigma ^c$ and establish their mapping properties on local Hardy spaces $\h_p^c(\R, \M)$ (introduced in \cite{XX17}) and inhomogeneous Triebel-Lizorkin spaces $F_p^{\alpha,c} (\R, \M)$ (introduced in \cite{XX18}). The main part of the proof concerns the case $p = 1$ for both kinds of spaces. Compared to the standard proof of the boundedness on Hardy spaces of a usual Calder\'on-Zygmund operator with a commutative or noncommutative convolution kernel, the present proof is much subtler and more technical. We need a careful analysis of a pseudo-differential operator on smooth (sub)atoms given in \cite{XX18}.

Now we state the main results of this paper. We concentrate on the pseudo-differential operators with operator-valued symbols in $S_{1,\delta}^n$, the class of infinitely differentiable functions $\sigma: \R\times \R \rightarrow \M$ such that 
\begin{equation*}
\| D_{s}^{\gamma}D_{\xi}^{\beta}\sigma(s,\xi) \|_\M  \leq C_{\gamma,\beta}(1+\vert\xi\vert)^{n+\delta |\gamma |_1- |\beta |_1}, \quad  \forall \, s, \xi \in \R.
\end{equation*}
If $0\leq \delta < 1$ and $\sigma \in S_{1,\delta}^0$, we will prove that the operator $T_\sigma^c$ in \eqref{intro-pdo-c} is bounded on $F_1^{\alpha,c} (\R,\M) $ for every $\alpha\in \mathbb{R}$, in particular on $\h_1^c(\R,\M)= F_1^{0,c} (\R,\M) $. Moreover, since the adjoint of $\sigma \in S_{1,\delta}^0$ still belongs to $S_{1,\delta}^0$ when $\delta<1$, we will deduce the boundedness on $F_p^{\alpha,c} (\R,\M) $  for $1<p \leq \infty$ from duality and interpolation. And for $\sigma\in S^n_{1,\delta}$, we can use the Bessel potential of order $n$ to connect $T_\sigma^c$ with $T_{\sigma'}^c $ for $\sigma'\in S_{1,\sigma}^0$, i.e. $T_\sigma^c = T_{\sigma'}^c \circ J^n$, so as to get the boundedness of $T_\sigma^c$ from $F_p^{\alpha,c} (\R,\M) $ to $F_p^{\alpha-n,c} (\R,\M) $. For the case $\delta=1$, we will also show that $T_\sigma^c$ is bounded on $F_1^{\alpha,c} (\R,\M) $ for every $\alpha>0$. But to get the boundedness on $F_p^{\alpha,c} (\R,\M) $ for $p>1$, we need more assumption on the symbol.

We then apply the outcome to the usual and quantum tori, and obtain parallel results in both cases. 

\medskip

Let us mention that independently  and at the same time, Gonz\'alez-P\'erez, Junge and Parcet developed in  \cite{P-J-P}  the pseudo-differential theory in quantum Euclidean spaces that are the non compact analogues of quantum tori. Although the two papers overlap in some ways, they are very different in nature in regard to both results and arguments. Their results concern the boundedness of a pseudo-differential  operator on the $L_p$-spaces with $1<p<\infty$, while the ours deal with this boundedness on a column Triebel-Lizorkin spaces $F_p^{\alpha,c}(\R,\M)$ with $\alpha\in \mathbb{R}$ and $1\leq p\leq \infty$. Note that the mixture Triebel-Lizorkin space $F_p^{\alpha}(\R,\M)$ coincides with $L_p(\N)$ when $\alpha=0$ and $1<p<\infty$. On the other hand, the arguments of \cite{P-J-P} are based on a careful analysis of the $L_2$ and $\BMO$ cases, while our proof in the case $p=1$ (the main case) relies entirely on the atomic decomposition of $F_1^{\alpha,c}(\R,\M)$ obtained in \cite{XX18}. 
 
\medskip

The paper is organized as follows. In the next section, we introduce some elementary notation and knowledge on noncommutative $L_p$-spaces, and the definitions of local Hardy spaces in \cite{XX17} and inhomogeneous Triebel-Lizorkin spaces in \cite{XX18}.  Then we present the smooth atomic decompositions of these spaces obtained in \cite{XX18}. In section \ref{section-pdo-def}, we give the concrete definitions and some easily deduced useful facts on operator-valued pseudo-differential operators. Section \ref{section-pdo-atom} is devoted to the study of the local mapping properties of pseudo-differential operators, i.e. their action on atoms. In sections \ref{section-regular} and \ref{section-forbidden}, we prove the mapping properties of pseudo-differential operators with regular and forbidden symbols respectively. The last section presents applications to the usual and quantum tori.

\medskip

We close this introduction section by the following convention on notation of inequalities. 
Throughout, we will use the notation $A\lesssim B$,
which is an inequality up to a constant: $A\leq cB$ for some constant
$c>0$. The relevant constants in all such inequalities may depend
on the dimension $d$, the test function $\Phi$ or $p$, etc, but
never on the function $f$ in consideration. The equivalence $A\approx B$
will mean $A\lesssim B$ and $B\lesssim A$ simultaneously.

 \section{Preliminaries on noncommutative analysis}\label{prelimi}

We begin with an introduction of notation and basic knowledge on vector-valued Fourier analysis, i.e., Fourier analysis on functions with values in a Banach spaces $X$.
Let $\mathcal{S}(\R; X)$ be the space of  $X$-valued rapidly decreasing  functions on $\R$ with the standard Fr\'{e}chet topology. In particular,  $\mathcal{S}(\mathbb R^d; \mathbb C)$ is simply denoted as $\mathcal{S}(\mathbb R^d)$.  Let $\mathcal{S}'(\R; X)$  be the space of continuous linear maps from $\mathcal{S}(\R)$ to $X$; the elements of $\mathcal{S}'(\R; X)$ are the so-called $X$-valued tempered distributions. All operations on $\mathcal{S}(\R)$ such as derivation, convolution and Fourier transform transfer to $\mathcal{S}'(\R; X)$ in the usual way. On the other hand, $L_p(\R; X)$ naturally embeds into $\mathcal{S}'(\R; X)$ for $1\leq p \leq \infty$, where $L_p(\R; X)$ stands for the space of strongly $p$-integrable functions from $\R$ to $X$. By this definition, Fourier transform and Fourier multipliers on $\R$ extend to vector-valued tempered distributions in a natural way.

We give some typical Fourier multipliers that will be frequently used in the sequel. For a real number $\alpha$, the Bessel potential is the operator $J^\alpha =(1-(2\pi)^{-2}\Delta  )^{\frac \alpha  2}$ defined on $\mathcal{S}'(\R; X)$, where $\Delta$ denotes the Laplacian on $\R$. If $\alpha=1$, we will abbreviate $J^1$ as $J$. We denote also $J_\alpha (\xi)=(1+|\xi|^2 )^{\frac \alpha  2}$ on $\mathbb{R}^d$. It is the symbol of the Fourier multiplier $J^\alpha $. Recall also the symbols of Littlewood-Paley decomposition on $\R$, which are used to define the Triebel-Lizorkin spaces.
Fix a Schwartz function $\varphi$ on $\mathbb{R}^d$ satisfying:
\begin{equation}\label{condition-phi}
\begin{cases}
\supp \varphi \subset \lbrace \xi:\frac{1}{2}\leq |  \xi |  \leq 2\rbrace.\\
\varphi >0 \mbox{ on } \lbrace \xi:\frac{1}{2}< |  \xi |  < 2\rbrace,\\
\sum_{k\in \mathbb{Z}}\varphi (2^{-k}\xi)=1, \forall \, \xi \neq 0.
\end{cases}
\end{equation}
For each $k \in \mathbb{N}$ let $\varphi_k$ be the function whose Fourier transform is equal to $\varphi(2^{-k}\cdot)$, and let $\varphi_0$ be the function whose Fourier transform is equal to $1-\sum_{k>0}\varphi (2^{-k}\cdot)$. Then $\{\varphi_k\}_{k\geq 0}$ gives a Littlewood-Paley decomposition on $\mathbb{R}^d$ such that
\begin{equation}
{\supp} \widehat\varphi_k\subset\{\xi\in\mathbb{R}^{d}:2^{k-1}\leq  |\xi |\leq2^{k+1}\},\quad \forall \, k\in\mathbb{N},\;\;\;\supp \widehat\varphi_0\subset\{\xi\in\mathbb{R}^{d}:  |\xi |\leq2\}\label{eq: supp varphi_j}
\end{equation}
and that
\begin{equation}
\sum_{k=0}^{\infty}\widehat\varphi_k(\xi)=1,\quad \forall \,\xi\in\mathbb{R}^{d}.\label{eq:resolution of unity}
\end{equation}

Other than the above Littlewood-Paley decomposition, we will need another kind of resolution of the unit on $\R$. Let $\mathcal{X}_0$ be a nonnegative infinitely differentiable function on $\R$ such that $\supp \mathcal{X}_0\subset 2Q_{0,0}$, and $\sum_{k\in\mathbb{Z}^d} \mathcal{X}_0(s-k)=1$ for every $s\in \R$. Here $Q_{0,0}$ is the unit cube centered at the origin, and $2Q_{0,0}$ is the cube with the same center, but twice the side length; see the end of this section for notation of general cubes. Set $\mathcal{X}_k = \mathcal{X}_0(\cdot-k)$. Then each $\mathcal{X}_k$ is supported in the cube $2Q_{0,k}=k+2Q_{0,0}$, and all $\mathcal{X}_k$'s form a smooth resolution of the unit:
\begin{equation}\label{eq: unit resolution}
1=\sum_{k\in\mathbb{Z}^d}\mathcal{X}_k(s), \quad \forall \, s\in \mathbb{R}^d. 
\end{equation}
This smooth resolution of the unit will often be used to divide functions or distributions into small pieces, which have the same smoothness as before, but have compact supports additionally.

\subsection{Noncommutative $L_{p}$-spaces}
Let us turn to the setting of operator-valued analysis, where the above involved Banach spaces $X$ are required to have some operator space structure now. In this paper, all function spaces in consideration are based on the noncommutative $L_p$-spaces associated to $(\M,\tau)$, where $\M$ is a von Neumann algebra $\tau$ is a normal semifinite faithful trace, and $1\leq p\leq\infty$.
The norm of  $L_p(\M)$ will be often denoted simply by $\|\cdot \|_p$. But if different  $L_p$-spaces  appear in a same context, we will sometimes precise the respective $L_p$-norms in order to avoid possible ambiguity. The reader is referred to \cite{PX2003} and  \cite{Xu2007} for more information on noncommutative $L_p$-spaces. We will also need Hilbert space-valued noncommutative $L_p$-spaces (see \cite{JLX2006} for more details).
Let $H$ be a Hilbert space and  $v \in H$ with $\|v\|=1$. Let  $p_v$ be the orthogonal projection onto
the one-dimensional subspace generated by $v$. Define
 $$L_p(\M; H^{r})=(p_v\otimes 1_{\M}) L_p(B(H)\overline\otimes \M)\;\textrm{ and }\;
 L_p(\M; H^{c})= L_p(B(H)\overline\otimes \M)(p_v\otimes 1_{\M}).$$
These are the row and column noncommutative $L_p$-spaces. They are 1-complemented subspaces of $L_p(B(H)\overline\otimes \M)$.

In most part of this paper, we are interested in operator-valued functions. The involved von Neumann algebra is the semi-commutative algebra $L_\infty(\R) \overline\otimes \M$ with tensor trace, denoted by $\N$ in the sequel.
We will frequently use the following Cauchy-Schwarz type inequality,
\begin{equation}\label{op-CS}
  \big|\int_{\mathbb{R}^d}\phi (s)f(s)ds \big|^{2}\leq\int_{\mathbb{R}^d}  |\phi(s) |^{2}ds\int_{\mathbb{R}^d}  |f(s) |^{2}ds,
\end{equation}
where $\phi:\mathbb{R}^d\rightarrow\mathbb{C}$ and $f:\mathbb{R}^d\rightarrow L_{1}(\mathcal{M} )+\mathcal{M}$
are functions such that all integrations of the above inequality make sense.
We will also require the operator-valued version of the Plancherel formula. For sufficiently nice functions $f: \mathbb{R}^{d}\rightarrow L_{1}  (\mathcal{M} )+\M$,
for example, for $f \in L_{2}  (\mathbb{R}^{d} )\otimes L_{2}  (\mathcal{M} )$,
we have
\begin{equation}\label{op-Plancherel}
\int_{\mathbb{R}^d} |f  (s )|^2 ds=\int_{\mathbb{R}^d}  |  \widehat{f}  (\xi )|^2  d\xi.
\end{equation}

\medskip

\subsection{Inhomogeneous Triebel-Lizorkin spaces}

We follow the presentation in \cite{XX18}, to give the definition of inhomogeneous Triebel-Lizorkin spaces.
Let $1\leq p<\infty$ and $\alpha \in \mathbb{R}$, and $\varphi$ be the Schwartz function determined by \eqref{condition-phi}.
 The column Triebel-Lizorkin space $F_p^{\alpha,c}(\R,\M)$ is defined by
$$
F_p^{\alpha,c}(\R,\M)=\lbrace f\in \mathcal{S}'(\R; L_1(\M)+\M): \|  f\| _{F_p^{\alpha,c}}<\infty\rbrace,
$$
where
$$
\|  f\| _{F_p^{\alpha,c}}= \big\|  (\sum_{j\geq 0}2^{2j\alpha}|  {\varphi}_j*f | ^2)^\frac{1}{2}\big\| _{L_p(\N)}.
$$
The row space $F_p^{\alpha,r}(\R,\M)$ consists of all $f$ such that $f^*\in F_p^{\alpha,c}(\R,\M)$, equipped with the norm $\|  f\| _{F_p^{\alpha,r}}=\|  f^*\| _{F_p^{\alpha,c}}$.
The mixture space $F_p^{\alpha}(\R,\M)$ is defined to be
\begin{equation*}
F_p^{\alpha}(\R,\M)=
\begin{cases}
F_p^{\alpha,c}(\R,\M)+F_p^{\alpha,r}(\R,\M) \quad \mbox{if}\quad  1\leq p\leq 2\\
F_p^{\alpha,c}(\R,\M)\cap F_p^{\alpha,r}(\R,\M) \quad \mbox{if}\quad  2<p<\infty,
\end{cases}
\end{equation*}
equipped with
\begin{equation*}
\|  f\| _{F_p^{\alpha}}=
\begin{cases}
\inf \lbrace \|  g\| _{F_p^{\alpha,c}}+\|  h\| _{F_p^{\alpha,r}}: f=g+h \rbrace  \quad \mbox{if}\quad  1\leq p\leq 2\\
\max \lbrace \|  f\| _{F_p^{\alpha,c}},\|  f\| _{F_p^{\alpha,r}} \rbrace  \quad \mbox{if}\quad  2< p<\infty.\\
\end{cases}
\end{equation*}
If $p=\infty$, define $F^{\alpha,c}_\infty(\R,\M)$ as the space of all $f\in \mathcal{S}'(\R; \M)$  such that 
$$
\|  f\|  _{F^{\alpha,c}_\infty}=\| \varphi_0*f\|_\N+\sup_{|Q|<1} \Big\|  \frac{1}{|  Q| }\int_Q\sum_{j\geq -\log_2(l(Q))}2^{2j\alpha}|  \varphi_j*f(s) | ^2ds \Big\| _\M^\frac{1}{2}<\infty,
$$
where $Q$ denotes a cube in $\R$, and $l(Q)$ its side length.
Let $1\leq p <\infty$, $\alpha\in \mathbb{R}$ and $q$ be the conjugate index of $p$. Then the dual space of $F_p^{\alpha,c}(\R,\M)$ coincides isomorphically with $F_q^{-\alpha,c}(\R,\M)$.

The Triebel-Lizorkin spaces form an interpolation scale with respect to the complex interpolation method: For $\alpha_0,\alpha_1\in \mathbb{R}$ and $1<p<\infty$, we have
$$
\big( F_\infty^{\alpha_0,c}(\R,\M), F_1^{\alpha_1,c}(\R,\M)\big)_{\frac{1}{p}}=F_p^{\alpha,c}(\R,\M), \quad \alpha=(1-\frac{1}{p})\alpha_0+\frac{\alpha_1}{p}.
$$
See \cite{XX18} for the proof of this interpolation.

When $\alpha=0$ and $1\leq  p<\infty$, it is proved in \cite{XX18} that  $ F_p^{0,c}(\R,\M) =\h_p^c(\R,\M)$ with equivalent norms, where $ \h_p^c(\R,\M)$ are the local Hardy spaces studied in \cite{XX17}. The lifting property of Triebel-Lizorkin spaces states that, for any $\beta\in \mathbb{R}$, $J^\beta$ is an isomorphism between $F_p^{\alpha,c}(\R, \M)$ and $F_p^{\alpha-\beta,c}(\R, \M)$. In particular, $J^\alpha$ is an isomorphism between $F_p^{\alpha,c}(\R, \M)$ and $\h_p^{c}(\R, \M)$. In this sense, these Triebel-Lizorkin spaces can be viewed as an extension of local Hardy spaces.
Moreover, when $1<p<\infty$, we have, with equivalent norms,
\begin{equation}\label{equi-Lp-hp}
L_p(\N) = \h_p(\R,\M) = F_p^{0} (\R,\M).
\end{equation}

\subsection{Atomic decompositions} We begin with the case $\alpha=0$, i.e., the atomic decomposition of local Hardy space $\h_1^c(\R,\M)$. Much as in the classical case, the atomic decomposition of $\h_1^c(\R,\M)$ can be deduced from the $\h_1$-$\bmo$ duality. The following definition of atoms is given in \cite{XX17}.

\begin{defn}\label{def: atom h1}
Let $Q$ be a cube in $\R$ with $| Q|\leq 1$. If $| Q|=1$, an $\h _1^c$-atom associated with $Q$ is a function $a\in L_{1}  (\mathcal{M};L_{2}^{c}  (\mathbb{R}^{d} ) )$ such that
\begin{itemize}%
\item $\supp a\subset Q$;
\item $\tau\big(\int_{Q}  |a  (s ) |^{2}ds\big)^{\frac{1}{2}}\leq  |Q |^{-\frac{1}{2}}$.
\end{itemize}
If $|Q|<1$, we assume additionally: 
\begin{itemize}
\item $\int_{Q}a(s)ds=0.$
\end{itemize}
\end{defn}

Let $\h_{1,{\rm at}}^{c}  (\mathbb{R}^{d},\mathcal{M} )$ be the
space of all $f$ admitting a representation of the form
\[
f=\sum_{j=1}^\infty \lambda_ja_j,
\]
where the $a_{j}$'s are $\h_1^{c}$-atoms and $\lambda_{j}\in\mathbb{C}$ such that $\sum_{j=1}^\infty  |\lambda_{j} |<\infty$. The above
series  converges in the sense of distribution.
We equip $\h_{1,{\rm at}}^{c} (\mathbb{R}^{d},\mathcal{M} )$ with
the following norm:
\[
  \|  f \|  _{\h_{1,{\rm at}}^{c}}=\inf  \{ \sum_{j=1}^\infty  |\lambda_{j} |: f=\sum_{j=1}^\infty\lambda_{j}a_{j};\,\mbox{\ensuremath{a_{j}}'s are \ensuremath{\h_1^{c}} -atoms, }\mbox{\ensuremath{\lambda}}_{j}\in\mathbb{C} \} .
\]
It is proved in \cite{XX17} that
\begin{equation}\label{thm:atomic h1}
\h_{1,{\rm at}}^{c} (\mathbb{R}^{d},\mathcal{M} )=\h_{1}^{c}  (\mathbb{R}^{d},\mathcal{M} )
\end{equation}
with equivalent norms. It is also evident in the proof of \eqref{thm:atomic h1} given in \cite{XX17} that, in Definition \ref{def: atom h1}, we can replace the support $Q$ of atoms  by any bounded multiple of $Q$.

\medskip

Let us introduce the smooth atomic decomposition of $F_1^{\alpha ,c }(\R,\M)$, which will be a key ingredient to obtain the boundedness of pseudo-differential operators on $F_1^{\alpha ,c }(\R,\M)$. This decomposition is an extension as well as an improvement of the atomic
decomposition of $\h^c_1(\R, \M)$ in \eqref{thm:atomic h1}. Compared to \eqref{thm:atomic h1},  smoothness of the atoms is improved and subatoms enter in the game.

For every $l=  (l_{1},\cdots,l_{d} )\in\mathbb{Z}^{d}$, $\mu \in \mathbb{N}_0$, we define  $Q_{\mu, l}$ in $\R$ to be the cubes centered at $2^{-\mu}l$, and with side length $2^{-\mu}$. For instance, $Q_{0,0} = [-\frac 1 2 , \frac 1 2 )^d$ is the unit cube centered at the origin. Let $\mathbb{D}_d$ be the collection of all the cubes $Q_{\mu, l}$ defined above. We write $(\mu , l)\leq (\mu ', l')$ if
 \[
 \mu \geq \mu ' \quad \mbox{and}\quad Q_{\mu, l}\subset 2Q_{\mu' ,l'}.
 \]
 For $a \in \mathbb{R}$, let $a_+=\max\{a,0\}$ and $[a]$ the largest integer less than or equal to $a$. Denote $|\gamma|_1 =\gamma_1+\cdots+\gamma_d$ and $D^\gamma = \partial_1^{\gamma_1}\cdots \partial_d^{\gamma_d}$ for $\gamma\in \mathbb{N}_0^d$, and $s^{\beta}=s_1^{\beta_1}\cdots s_d^{\beta_d}$ for $s\in \R$, $\beta \in \mathbb{N}_0^d$. Recall that $J^\alpha$ is the Bessel potential of order $\alpha$.

 \begin{defn}\label{def:smooth T atom}
Let $\alpha\in\mathbb{R}$, and let $K$ and $L$ be two integers such that
\[
K\geq ([\alpha]+1)_+ \quad \mbox{and} \quad L\geq \max{\{[-\alpha],-1\}}.
\]
\begin{enumerate}[\rm(1)]
\item A function $b\in L_1(\mathcal{M};L_2^c(\R))$  is called an $(\alpha, 1)$-atom if
\begin{itemize}
\item $\supp b\subset 2 Q_{0,k}$;
\item  $\tau  (\int_{\R}  |D^{\gamma}b(s) |^{2}ds )^{\frac{1}{2}}\leq1$, $\,\,\forall\,\gamma \in  \mathbb{N}_0^d\,, \,\, |\gamma |_1\leq K$.
\end{itemize}

\item Let $Q=Q_{\mu, l}\in \mathbb{D}_d$, a function $a\in L_1(\mathcal{M};L_2^c(\R))$  is called an $(\alpha, Q)$-subatom if
\begin{itemize}
\item $\supp a\subset 2Q$;
\item $\tau  (\int_{\R}  |D^{\gamma}a(s) |^{2}ds )^{\frac{1}{2}}\leq  |Q |^{\frac{\alpha}{d}-\frac{  |\gamma |_1}{d}}$,
$\,\,\forall\,\gamma \in  \mathbb{N}_0^d\,, \,\, |\gamma |_1\leq K$;
\item $\int_{\mathbb{R}^{d}}s^{\beta}a  (s )ds=0$, $\,\,\forall\,\beta \in  \mathbb{N}_0^d\,, \,\, |\beta |_1\leq L$.
\end{itemize}

\item A function $g\in L_1(\mathcal{M};L_2^c(\R))$ is called an $(\alpha, Q_{k,m})$-atom if
\begin{equation}\label{eq: Q atom-sc}
\tau  (\int_{\R}  | J^\alpha g(s) |^{2}ds )^{\frac{1}{2}}\leq  |Q_{k,m} |^{-\frac 1 2 }\quad \text{and} \quad g=\sum_{  (\mu,l )\leq (k,m )}d_{\mu, l}a_{\mu, l},
\end{equation}
for some $k\in\mathbb{N}_0$ and $m\in\mathbb{Z}^{d}$, where the $a_{\mu ,l}$'s
are $(\alpha,Q_{\mu ,l})$-subatoms and the $d_{\mu ,l}$'s are complex numbers
such that $$  (\sum_{  (\mu,l )\leq  (k,m )}  |d_{\mu ,l} |^{2} )^{\frac{1}{2}}\leq  |Q_{k,m} |^{-\frac{1}{2}}.$$
\end{enumerate}
\end{defn}

We have obtained in \cite{XX18} the following smooth atomic decomposition:

\begin{thm}
\label{thm: atomic decop T_L}
Let $\alpha\in\mathbb{R}$ and $K$, $L$ be two integers fixed as in Definition \ref{def:smooth T atom}. Then any
$f\in F_{1}^{\alpha, c}  (\mathbb{R}^{d},\mathcal{M} )$ can be represented as
\begin{equation}\label{eq:decomp T-L}
f=\sum_{j=1}^{\infty}\big(\mu_j b_j+\lambda_{j}g_{j}\big),
\end{equation}
where the $b_j$'s are $(\alpha,1)$-atoms, the $g_{j}$'s are $(\alpha,Q)$-atoms, and $\mu_j$, $\lambda_{j}$ are complex numbers
with
\begin{equation}\label{eq:mu+lambda}
\sum_{j=1}^{\infty}  (  |\mu_j |+  |\lambda_{j} | )<\infty.
\end{equation}
 Moreover, the infimum of \eqref{eq:mu+lambda} with respect to all admissible representations
yields an equivalent norm in $F_{1}^{\alpha,c}   (\mathbb{R}^{d},\mathcal{M} )$.
\end{thm}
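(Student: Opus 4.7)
The plan is to adapt the classical Frazier--Jawerth smooth atomic decomposition for Triebel--Lizorkin spaces to the operator-valued setting, combining the Littlewood--Paley decomposition in \eqref{eq:resolution of unity}, the smooth resolution of unity $\{\mathcal{X}_m\}_{m\in\Z}$ in \eqref{eq: unit resolution}, and a noncommutative Calder\'on--Zygmund-type stopping time argument tailored to the square function defining $\|\cdot\|_{F_1^{\alpha,c}}$. Starting from $f = \varphi_0 * f + \sum_{k\geq 1} \varphi_k * f$, the low-frequency term $\varphi_0 * f$ will produce the $(\alpha,1)$-atoms, while the sum of the high-frequency pieces must be reassembled into $(\alpha,Q_{k,m})$-atoms indexed by dyadic cubes.

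For the low-frequency part, I would localize $\varphi_0 * f$ via $(\varphi_0 * f)(s) = \sum_{m\in\Z} \mathcal{X}_m(s)(\varphi_0 * f)(s)$, so that each summand is supported in $2Q_{0,m}$. Smoothness up to order $K$ follows from the Schwartz decay of $\varphi_0$ and the fact that $D^{\gamma}\bigl(\mathcal{X}_m\cdot(\varphi_0*f)\bigr)$ is controlled by the convolution of $f$ with a Schwartz function; one normalizes to get bounds as required in Definition \ref{def:smooth T atom}(1), and the coefficient sum is controlled by the $L_1(\M;L_2^c)$-norm of $\varphi_0*f$, which in turn is dominated by $\|f\|_{F_1^{\alpha,c}}$. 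For the high-frequency part, the vanishing moments up to order $L$ come for free from the fact that $\widehat{\varphi_k}$ vanishes in a neighborhood of the origin for $k\geq 1$, so $\int s^\beta (\varphi_k*f)(s)\,ds = 0$ formally, which one makes rigorous after localization using the polynomial-annihilation trick standard in the Triebel--Lizorkin theory.

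The grouping into $(\alpha,Q_{k,m})$-atoms will be produced by a stopping time applied to the square function $g_\alpha(f) = \bigl(\sum_{k\geq 1} 2^{2k\alpha}|\varphi_k * f|^2\bigr)^{1/2}$. At each level $n\in\Z$, one selects a noncommutative projection corresponding to $\{g_\alpha(f) > 2^n\}$ (using the Cuculescu construction or the $\h_1$-$\bmo$ duality machinery developed in \cite{XX17, XX18}), and from the difference of consecutive projections one extracts cubes $Q_{k,m}$ on which the mass of $g_\alpha(f)$ is concentrated. Inside each such cube, the further localization of the individual pieces $\varphi_k * f$ by dilates of $\mathcal{X}$ at scale $2^{-\mu}$, with $(\mu, l) \leq (k,m)$, gives the $(\alpha,Q_{\mu,l})$-subatoms; the smoothness bound $\||D^\gamma a_{\mu,l}|^2\|^{1/2} \lesssim |Q_{\mu,l}|^{\alpha/d - |\gamma|_1/d}$ reflects precisely the scaling $\|D^\gamma \varphi_\mu\|_{L_2} \approx 2^{\mu(|\gamma|_1 - d/2)}$ together with the factor $2^{\mu\alpha}$ absorbed in the coefficient.

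The main obstacle I expect is twofold: first, executing a noncommutative analogue of the Calder\'on--Zygmund stopping argument so that the resulting coefficients $\{\mu_j,\lambda_j\}$ satisfy \eqref{eq:mu+lambda} with sum comparable to $\|f\|_{F_1^{\alpha,c}}$ — pointwise stopping must be replaced by a projection-valued one, and the $\ell^2$-sum controlling the inner subatom weights $(\sum |d_{\mu,l}|^2)^{1/2}\leq |Q_{k,m}|^{-1/2}$ in \eqref{eq: Q atom-sc} has to be verified using the operator-valued Plancherel relation \eqref{op-Plancherel}; second, proving the nontrivial direction of the norm equivalence, namely that any admissible decomposition \eqref{eq:decomp T-L} satisfies $\|f\|_{F_1^{\alpha,c}} \lesssim \sum(|\mu_j|+|\lambda_j|)$, which requires verifying that $\|b\|_{F_1^{\alpha,c}} \lesssim 1$ and $\|g\|_{F_1^{\alpha,c}} \lesssim 1$ uniformly over atoms. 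For the latter, one separately estimates the contributions of $\varphi_j * a_{\mu,l}$ to the square function in the three regimes $j < \mu$, $j \approx \mu$, $j > \mu$, exploiting in the extreme ranges the vanishing moments (order $L$) and the smoothness (order $K$) respectively, which is precisely why the thresholds on $K$ and $L$ are chosen as in Definition \ref{def:smooth T atom}.
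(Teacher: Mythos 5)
Your high-level outline (Littlewood--Paley plus smooth resolution of unity plus a Calder\'on--Zygmund--type selection, then a separate verification that atoms have uniformly bounded $F_1^{\alpha,c}$-norm) points in a reasonable direction, but there are two places where the plan, as stated, would not go through.

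First, the claim that vanishing moments ``come for free'' from $\widehat{\varphi}_k$ vanishing near the origin and can be fixed after localization ``using the polynomial-annihilation trick'' understates the difficulty. The moment of $\varphi_k * f$ vanishes only as a global integral; once you multiply by a compactly supported cutoff $\mathcal{X}_{\mu,l}$, the localized piece $\mathcal{X}_{\mu,l}(\varphi_k*f)$ has no reason to annihilate $s^\beta$ for $|\beta|_1 \le L$. Restoring these moments requires subtracting a polynomial correction from each localized piece and then re-distributing that polynomial to neighbouring pieces (or re-expanding it against dual functions) in a way compatible with the $\ell^2$-constraint $\sum |d_{\mu,l}|^2 \le |Q_{k,m}|^{-1}$ in \eqref{eq: Q atom-sc}; this is a delicate bookkeeping step, not a formality, and you should spell it out. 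Second, you propose a direct noncommutative stopping time (Cuculescu-type) on the operator square function $g_\alpha(f)$. This is precisely the construction that the underlying $\h_1^c$-atomic decomposition in \cite{XX17} was designed to \emph{avoid}: as the present paper remarks, \eqref{thm:atomic h1} is obtained from $\h_1$--$\bmo$ duality, not from a projection-valued weak-type Calder\'on--Zygmund decomposition, because the latter does not carry over cleanly to the operator-valued setting. The structure of Definition~\ref{def:smooth T atom}(3) is the give-away: the condition $\tau(\int |J^\alpha g|^2)^{1/2} \le |Q_{k,m}|^{-1/2}$ says exactly that $J^\alpha g$ has the size of an $\h_1^c$-atom in the sense of Definition~\ref{def: atom h1}. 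The route consistent with this definition is therefore to lift $f \in F_1^{\alpha,c}$ to $J^\alpha f \in \h_1^c$, invoke the already-established $\h_1^c$-atomic decomposition \eqref{thm:atomic h1}, and then, for each $\h_1^c$-atom $a$, decompose $J^{-\alpha}a$ into the smooth subatoms $a_{\mu,l}$ using a Littlewood--Paley cut combined with the resolution of unity at scale $2^{-\mu}$ (the moment cancellation of the original $\h_1^c$-atom feeds the moment conditions of the subatoms, and the freedom in the choice of resolution explains why $K$ and $L$ may be taken arbitrarily large, as the paper notes after Theorem~\ref{thm: atomic decop T_L}). Your converse-direction analysis by splitting into $j<\mu$, $j\approx\mu$, $j>\mu$ and using vanishing moments vs.\ smoothness in the two extreme ranges is the standard and correct strategy.
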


It is worthwhile to point out that the above $K$ and $L$ can be arbitrarily large, depending on the resolution of the unit used in the proof of Theorem \ref{thm: atomic decop T_L} given in \cite{XX18}. In other words, the orders of the smoothness and moment cancellation of the atoms are at our disposal, so that we can require good enough conditions on the atoms. This will be a very important technique in the proofs of our main results.

\section{Pseudo-differential operators: definitions and basic properties}\label{section-pdo-def}

We introduce the definitions and some basic properties of pseudo-differential operators in this section. The symbols of pseudo-differential operators considered here are $B(X)$-valued, where $X$ is a Banach space and $B(X)$ denotes the space of all bounded linear operators on $X$. However, in the later sections, we will only consider those symbols with values in $\M$.

 Let $n\in\mathbb{R}$ and $0\leq \delta, \rho \leq1$.
Then $S_{\rho,\delta}^{n}$ denotes the collection of all  infinitely differentiable
functions $\sigma$ defined on $\R\times \R$  and with values in $B(X)$, such that for each pair of multi-indices of nonnegative integers
$\gamma$, $\beta$, the inequality
\begin{equation*} 
\Vert D_{s}^{\gamma}D_{\xi}^{\beta}\sigma(s,\xi)\Vert _{B(X)}\leq C_{\gamma,\beta}(1+\vert\xi\vert)^{n+\delta |\gamma |_1-\rho |\beta |_1}
\end{equation*}
holds for some constant $C_{\gamma, \beta}$ depending on $\gamma,\beta$ and $\sigma$. Here again $\gamma=  (\gamma_{1},\cdots,\gamma_{d} )\in\mathbb{N}_{0}^{d}$,  $|\gamma|_1=\gamma_1+\cdots +\gamma_d $ and $D_s^{\gamma}=\frac{\partial^{\gamma_{1}}}{\partial s_{1}^{\gamma_{1}}}\cdots\frac{\partial^{\gamma_{d}}}{\partial s_{d}^{\gamma_{d}}}$.

\begin{defn}

Let $\sigma\in S_{\rho,\delta}^{n}$. For any function
$f\in \mathcal{S}(\R; X)$,
 the pseudo-differential operator $T_\sigma$ is a mapping $f\mapsto T_\sigma f$
given by
\begin{equation}\label{eq: def of pd}
T_\sigma f(s)=\int_{\mathbb{R}^{d}}\sigma(s,\xi)\widehat{f}(\xi)e^{2\pi {\rm i}s\cdot \xi}d\xi.
\end{equation}
We call $\sigma$ the symbol of $T_\sigma $. 
 \end{defn}

 \begin{prop}
Let $0\leq \delta, \rho \leq 1$ and $n\in \mathbb{R}$. For any $\sigma\in S_{\rho,\delta}^n$, $T_\sigma $ is continuous on $\mathcal{S}(\R; X)$. 
 \end{prop}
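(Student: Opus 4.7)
The plan is to verify continuity of $T_\sigma:\mathcal{S}(\R;X)\to\mathcal{S}(\R;X)$ by bounding each defining seminorm
$$p_{N,M}(g)=\sup_{s\in\R}(1+|s|)^{N}\sum_{|\gamma|_1\leq M}\|D^\gamma g(s)\|_X$$
of $T_\sigma f$ by a single Schwartz seminorm of $f$. Since $\widehat{f}\in\mathcal{S}(\R;X)$ decays faster than any polynomial and $\|\sigma(s,\xi)\|_{B(X)}\lesssim (1+|\xi|)^{n}$, the integral \eqref{eq: def of pd} converges absolutely and uniformly in $s$ on compact sets, so $T_\sigma f$ is well defined. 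The same domination justifies differentiation under the integral sign, and the Leibniz rule yields
$$D^\gamma_s(T_\sigma f)(s)=\sum_{\gamma'+\gamma''=\gamma}\binom{\gamma}{\gamma'}\int_{\R}D_s^{\gamma'}\sigma(s,\xi)\,(2\pi{\rm i}\xi)^{\gamma''}\widehat{f}(\xi)\,e^{2\pi{\rm i}s\cdot\xi}\,d\xi.$$

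To extract polynomial decay in $s$, I would integrate by parts in the variable $\xi$. Using the identity $(1+|s|^{2})^{N}e^{2\pi{\rm i}s\cdot\xi}=(1-(2\pi)^{-2}\Delta_\xi)^{N}e^{2\pi{\rm i}s\cdot\xi}$ together with the rapid decay of $\widehat f$ that annihilates all boundary terms, I would transfer the differential operator $(1-(2\pi)^{-2}\Delta_\xi)^{N}$ onto the product $D_s^{\gamma'}\sigma(s,\xi)\,\xi^{\gamma''}\widehat{f}(\xi)$. Expanding via the Leibniz rule in $\xi$ produces a finite sum of integrands of the form
$$D_s^{\gamma'}D_\xi^{\beta'}\sigma(s,\xi)\cdot D_\xi^{\beta''}\bigl(\xi^{\gamma''}\widehat{f}(\xi)\bigr),\qquad |\beta'|_1+|\beta''|_1\leq 2N.$$
The symbol estimate gives $\|D_s^{\gamma'}D_\xi^{\beta'}\sigma(s,\xi)\|_{B(X)}\lesssim(1+|\xi|)^{n+\delta|\gamma|_1}$, while $D_\xi^{\beta''}(\xi^{\gamma''}\widehat f(\xi))$ is again Schwartz, so its $X$-norm is dominated by $(1+|\xi|)^{|\gamma|_1-M'}$ times a seminorm of $\widehat f$ involving $\xi$-derivatives up to order $2N$, for any $M'\in\mathbb{N}_0$.

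Choosing $M'$ large enough (depending on $N$, $M$, $n$, $\delta$ and $d$) makes the integrand integrable and produces
$$(1+|s|)^{2N}\,\|D^\gamma_s(T_\sigma f)(s)\|_X\;\lesssim\;p_{M',\,2N}(\widehat{f}).$$
Summing over $|\gamma|_1\leq M$ and taking the supremum in $s$ bounds $p_{2N,M}(T_\sigma f)$ by a single Schwartz seminorm of $\widehat f$. Since the Fourier transform is continuous on $\mathcal{S}(\R;X)$, this translates into a bound by a seminorm of $f$ itself, which proves the proposition.

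The main obstacle is purely bookkeeping: carefully tracking how many $\xi$-derivatives the integration by parts forces onto $\widehat f$ (equivalently, how many $s$-multiplications on $f$) against the $\xi$-growth coming both from the symbol estimate and from the polynomial factor $\xi^{\gamma''}$. No restriction on the parameters $\delta$ and $\rho$ is needed, since $|\gamma|_1$ is bounded by the fixed $M$ and we are free to take $M'$ as large as necessary.
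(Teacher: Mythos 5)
Your proposal is correct and takes essentially the same approach as the paper: integration by parts in $\xi$ to convert polynomial weight in $s$ into derivatives on the integrand, followed by the symbol bound and the rapid decay of $\widehat{f}$. The only cosmetic difference is that you organize the integration by parts through $(1-(2\pi)^{-2}\Delta_\xi)^N$ and spell out the Leibniz bookkeeping in full, whereas the paper uses the monomial identity $(2\pi{\rm i}s)^\gamma e^{2\pi{\rm i}s\cdot\xi}=D_\xi^\gamma e^{2\pi{\rm i}s\cdot\xi}$ and leaves the seminorm estimate implicit.
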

 \begin{proof}
 By integration by parts, for any $s\in \R$ and $\gamma \in \mathbb{N}_0^d$, we have
 \begin{equation*}
\begin{split}
\| (2\pi {\rm i}s)^{\gamma}T_{\sigma} f\|_{X} & =\Big\|
(2\pi {\rm i}s)^{\gamma}\int_{\R}  \sigma(s,\xi)\widehat{f}(\xi)e^{2\pi {\rm i} s\cdot \xi} d\xi \Big\|_{X}\\
& = \Big\| \int _{\R} \sigma(s,\xi)\widehat{f}(\xi)D_\xi^{\gamma}(e^{2\pi {\rm i} s \cdot \xi} )d\xi \Big\|_{X}\\
&= \Big\| \int_{\R} D_\xi^{\gamma}[\sigma(s,\xi)\widehat{f}(\xi) ]e^{2\pi {\rm i} s \cdot \xi} d\xi \Big\|_{X}<\infty.
\end{split}
\end{equation*} 
Thus, $T _{\sigma} f $ is rapidly decreasing. A similar argument works for the partial derivatives of $T _{\sigma}f$, then we easily check that $T _{\sigma} f$ maps $\mathcal{S}(\R; X)$ continuously to itself. 
\end{proof}

Another way to write \eqref{eq: def of pd} is as a double integral:
\begin{equation}\label{eq: repeated integral}
T_{\sigma}f (s)=\int_{\R}\int_{\R} \sigma(s,\xi)f(t)e^{2\pi {\rm i} (s-t)\cdot \xi}dt d\xi.
\end{equation}
However, the above $\xi$-integral does not necessarily converge absolutely, even for $f\in \mathcal{S}(\R; X)$.
To overcome this difficulty, we will  approximate $\sigma$ by symbols with compact support. To this end, let us fix a compactly supported infinitely differentiable function $\eta$ defined on $\R\times \R$ such that $\eta = 1$ near the origin. Set
\begin{equation}\label{eq: compact symbol}
\sigma_j(s,\xi)=\sigma(s,\xi)\eta(2^{-j}s,2^{-j}\xi)\quad\text{ with }j\in \mathbb{N}.
\end{equation}
 Note that $\sigma_j$ converges pointwise to $\sigma$ and $\sigma_j\in S_{\rho,\delta}^{n}$ uniformly in $j$. Thus, for any $f\in \mathcal{S}(\R; X)$,  $T_{\sigma_j}f$ converges to $T_\sigma f$ in $\mathcal{S}(\R; X)$ as $j\rightarrow \infty$. Since the $\sigma_j$'s have compact supports, formula \eqref{eq: repeated integral} works for $T _{\sigma_j}f(s)$. 
 Then we can define the integral \eqref{eq: repeated integral} as follows:
\begin{equation}\label{eq: converge symbol}
T_\sigma  f(s)=\lim_{j\rightarrow \infty} \int_{\R}\int_{\R} {\sigma_j}(s,\xi)f(t)e^{2\pi {\rm i} (s-t)\cdot \xi}dt d\xi. 
\end{equation}

\begin{prop}\label{prop: cont of adjoint}
Let  $0\leq \delta<1, 0\leq \rho \leq 1$ and $n\in \mathbb{R}$. For any $\sigma\in S_{\rho,\delta}^n$, the adjoint of $T_\sigma^c$ is continuous on $ \mathcal{S}(\R; X^*)$. 
\end{prop}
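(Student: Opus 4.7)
The plan is to realize the adjoint as an oscillatory integral via duality, paralleling the definition \eqref{eq: converge symbol} for $T_\sigma^c$, and to estimate its Schwartz seminorms through a double integration by parts. Pairing $T_\sigma^c f$ against $g\in\mathcal{S}(\R;X^*)$ and using Fubini together with Fourier inversion formally identifies
\begin{equation*}
(T_\sigma^c)^*g(t)=\iint_{\R\times\R}\sigma(s,\xi)^{*}g(s)\,e^{2\pi{\rm i}(s-t)\cdot\xi}\,ds\,d\xi.
\end{equation*}
As in \eqref{eq: repeated integral}, this expression is only conditionally convergent, so I would replace $\sigma$ by the compactly supported symbols $\sigma_j$ from \eqref{eq: compact symbol}, for which the double integral is absolutely convergent, and take the limit as $j\to\infty$ after showing that $\{(T_{\sigma_j}^c)^*g\}$ is Cauchy in $\mathcal{S}(\R;X^*)$.

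The core estimate comes from two simultaneous integrations by parts in the phase. The identity $(1-(2\pi)^{-2}\Delta_\xi)^M e^{2\pi{\rm i}(s-t)\cdot\xi}=(1+|s-t|^2)^M e^{2\pi{\rm i}(s-t)\cdot\xi}$ lets me transfer $(1-(2\pi)^{-2}\Delta_\xi)^M$ onto $\sigma_j^{*}$ and pull out the damping factor $(1+|s-t|^2)^{-M}$; the analogous identity with $s$ and $\xi$ swapped transfers $(1-(2\pi)^{-2}\Delta_s)^N$ onto $\sigma_j^{*}g$ and produces the factor $(1+|\xi|^2)^{-N}$. After these manipulations, the integrand is dominated, via the symbol estimates, by
\begin{equation*}
(1+|s-t|^2)^{-M}\,(1+|\xi|)^{n+2\delta N-2\rho M-2N}\,\sum_{|\gamma|_1\leq 2N}\|D_s^{\gamma}g(s)\|_{X^*}.
\end{equation*}
Choosing $N$ large, the hypothesis $\delta<1$ makes the exponent $n+2(\delta-1)N-2\rho M$ as negative as desired regardless of the value of $\rho$, yielding an absolutely convergent $\xi$-integral; the factor $(1+|s-t|^2)^{-M}$ together with the Schwartz decay of $g$ then controls the $s$-integral.

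The weighted seminorms $\sup_{t}(1+|t|)^{|\alpha|_1}\|D_t^{\gamma}(T_{\sigma_j}^c)^*g(t)\|_{X^*}$ are handled by the same scheme: each $D_t$-differentiation under the integral produces a factor $-2\pi{\rm i}\xi$, equivalent to raising the symbol order by one and reabsorbed by a few more integration-by-parts steps in $s$. The prefactor $t^\alpha$ is split via the binomial expansion $t=s-(s-t)$; each factor $(s-t)^{\beta'}$ is converted to $D_\xi^{\beta'}$ on $\sigma_j^{*}$ by one more integration by parts in $\xi$, and the remaining powers $s^{\alpha-\beta'}$ are absorbed by the decay of $g$. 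The resulting bounds are independent of $j$, and applying them to the truncation errors $\sigma_j-\sigma_k\in S^n_{\rho,\delta}$ (which converge pointwise to $0$ while remaining uniformly bounded in their symbol seminorms) delivers the Cauchy property and hence continuity of $g\mapsto(T_\sigma^c)^*g$ on $\mathcal{S}(\R;X^*)$.

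The principal obstacle is the conditional convergence of the defining integral: gaining damping in both $\xi$ and $s-t$ simultaneously is delicate, and the hypothesis $\delta<1$ is indispensable precisely because it is what makes each $s$-integration by parts \emph{productive}, i.e.\ capable of generating a strictly negative net power of $(1+|\xi|)$. Without it, the case $\rho=0$ would offer no compensating $\xi$-decay whatsoever and the whole scheme would collapse.
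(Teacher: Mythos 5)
Your argument is correct and matches the paper's approach: the paper identifies the adjoint with the same iterated oscillatory integral \eqref{eq: adjoint operator} via the truncations $\sigma_j$ from \eqref{eq: compact symbol} and then dismisses the continuity claim with the phrase ``by integration by parts,'' which is exactly the double integration-by-parts scheme (damping in $s-t$ via $\Delta_\xi$, damping in $\xi$ via $\Delta_s$) that you spell out, and you correctly identify $\delta<1$ as the hypothesis that makes the $s$-differentiations gain a net negative power of $(1+|\xi|)$. One harmless bookkeeping slip: $(1-(2\pi)^{-2}\Delta_\xi)^M\sigma_j^*$ is dominated by $(1+|\xi|)^n$, not $(1+|\xi|)^{n-2\rho M}$, since the undifferentiated term in the binomial expansion is the worst one; your conclusion is unaffected because you rely only on the $2(\delta-1)N$ gain.
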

\begin{proof}
For any $f\in \mathcal{S}(\R; X)$ and $g\in \mathcal{S}(\R; X^*)$, by the duality relation 
$$
\langle T_\sigma  f,g\rangle =\langle f,(T_\sigma )^* g\rangle, 
$$ 
we check that 
\begin{equation}\label{eq: adjoint operator}
(T_{\sigma})^* g(s)=\lim_{j\rightarrow \infty} \int_{\R}\int_{\R} \sigma_j^*(t,\xi)g(t)e^{2\pi {\rm i} (s-t)\cdot \xi}dt d\xi.
\end{equation}
By integration by parts,  it is clear that $(T _{\sigma})^*$ is continuous on $ \mathcal{S}(\R; X^*)$.
\end{proof}

Since $\mathcal{S}' (\R; X^{**})=(\mathcal{S}(\R; X^{*}))^*$ (see \cite[Section~51]{Trves} for more details of this duality), in the usual way, we extend $T _{\sigma}$  to an operator on $\mathcal{S}' (\R; X^{**})$.

\begin{defn}
Let $f\in \mathcal{S}' (\R; X^{**})$. We define $T_{\sigma}f$ by the formula 
$$
\langle T_\sigma  f,g\rangle=\langle f,(T_\sigma )^*g\rangle ,  \quad \forall\, g\in  \mathcal{S}(\R; X^*).
$$
\end{defn}

By Proposition \ref{prop: cont of adjoint}, $(T_\sigma)^* g \in \mathcal{S}(\R; X^{*})$ whenever $g\in  \mathcal{S}(\R; X^*)$. So the bracket on the right hand side of the above definition is well defined. Therefore, $T_\sigma f$ is well defined, and takes value in $\mathcal{S}' (\R; X^{**})$ as well.

\begin{prop}
Let  $0\leq \delta<1, 0\leq  \rho \leq 1$  and $n\in \mathbb{R}$.  For any $\sigma\in S_{\rho,\delta}^n$,  $T_\sigma$ is continuous on $ \mathcal{S}' (\R; X^{**})$. 
\end{prop}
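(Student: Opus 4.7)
The plan is to derive this continuity directly from the duality definition of $T_\sigma$ on $\mathcal{S}'(\R; X^{**})$ together with the continuity of the adjoint $(T_\sigma)^*$ on $\mathcal{S}(\R; X^*)$ already established in Proposition \ref{prop: cont of adjoint}. The key point is that $\mathcal{S}'(\R; X^{**})$ is equipped with the weak-$*$ topology coming from the duality with $\mathcal{S}(\R; X^*)$, so continuity on $\mathcal{S}'(\R; X^{**})$ amounts to testing against Schwartz functions with values in the dual space.

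Concretely, I would first recall that a net $(f_\lambda)\subset \mathcal{S}'(\R; X^{**})$ converges to $f$ precisely when $\langle f_\lambda, g\rangle \to \langle f, g\rangle$ for every test function $g\in \mathcal{S}(\R; X^*)$. Then, using the definition
\[
\langle T_\sigma f_\lambda, g\rangle = \langle f_\lambda, (T_\sigma)^* g\rangle,
\]
I would observe that by Proposition \ref{prop: cont of adjoint} the element $(T_\sigma)^* g$ belongs to $\mathcal{S}(\R; X^*)$, so it is itself a legitimate test function. Hence $\langle f_\lambda, (T_\sigma)^* g\rangle \to \langle f, (T_\sigma)^* g\rangle = \langle T_\sigma f, g\rangle$, and the continuity of $T_\sigma$ on $\mathcal{S}'(\R; X^{**})$ follows. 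One may equivalently phrase this as the statement that $T_\sigma$, viewed as the transpose of the continuous operator $(T_\sigma)^* : \mathcal{S}(\R; X^*)\to \mathcal{S}(\R; X^*)$, is automatically weak-$*$ continuous.

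There is no real obstacle here: the assumption $\delta<1$ enters only through Proposition \ref{prop: cont of adjoint} (which produces a symbol of the adjoint lying again in a H\"ormander class so that integration by parts in \eqref{eq: adjoint operator} yields the needed seminorm bounds), and once the adjoint is continuous on $\mathcal{S}(\R; X^*)$ the present proposition is a formal consequence. The only care required is to make sure that the brackets on both sides are well-defined, which was already spelled out in the definition preceding the statement. Thus the proof will be short, essentially a one-line transposition argument.
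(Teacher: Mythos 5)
Your proposal is correct and takes essentially the same route as the paper: $T_\sigma$ is continuous on $\mathcal{S}'(\R;X^{**})$ because it is the transpose of the continuous map $(T_\sigma)^*$ on $\mathcal{S}(\R;X^*)$, and the identity $\langle T_\sigma f, g\rangle = \langle f, (T_\sigma)^* g\rangle$ immediately transfers convergence of $\langle f_\lambda, \cdot\rangle$ into convergence of $\langle T_\sigma f_\lambda, \cdot\rangle$. The only cosmetic difference is that you use nets where the paper writes a sequence; your phrasing is the more careful one since the weak-$*$ topology on $\mathcal{S}'$ need not be first-countable, but the argument is the same.
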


\begin{proof}
For any $f\in \mathcal{S}' (\R; X^{**})$, we take a sequence $(f_j)$  such that $f_j\rightarrow f$ in $\mathcal{S}' (\R; X^{**})$. Then we have
$$
\langle T_\sigma  f_j ,g\rangle=\langle f_j, (T_\sigma )^* g\rangle \longrightarrow \langle f,(T_\sigma )^* g \rangle =\langle T_\sigma f , g\rangle \quad \forall \, g\in  \mathcal{S}(\R; X^*).
$$
Thus, $T_\sigma  f_j $ converges to $T_\sigma   f $ in $ \mathcal{S}' (\R; X^{**})$. So $T_\sigma $ is continuous on $ \mathcal{S}' (\R; X^{**})$. 
\end{proof}

The pseudo-differential operator defined above has a parallel description
in terms of a distribution kernel:
$$
T_\sigma f(s)=\int_{\R}K(s,s-t)f(t)dt,
$$
where $K $ is the inverse Fourier transform of $\sigma $
with respect to the variable $\xi$, i.e.  
\begin{equation}\label{kernel-symbol}
K(s,t)=\int_{\R} \sigma (s,\xi)e^{2\pi {\rm i}t \cdot \xi} d\xi.
\end{equation}

In the sequel, we will focus on the symbols in the class $S_{1,\delta}^n$ with $0\leq\delta\leq 1$ and $n\in \mathbb{R}$. Similarly to the classical case (see \cite{CM1978}, \cite{Hormander1967}, \cite{Stein1993}
and \cite{Torres1991}), we prove that for any operator-valued symbol $\sigma\in S_{1,\delta}^n$, the corresponding kernel $K$ satisfies the following estimates:

\begin{lem}\label{Lem T K}
Let  $\sigma\in S_{1,\delta}^{n}$ and $0\leq\delta\leq1$. Then the kernel $K(s,t)$ in \eqref{kernel-symbol} satisfies
\begin{equation}
\|  D_{s}^{\gamma}D_{t}^{\beta}K(s,t)\| _{B(X)}  \leq  C_{\gamma,\beta}|  t|  ^{-  |\gamma |_1 -  |\beta |_1-d-n},\quad \forall\, t\in\mathbb{R}^{d}\setminus  \{ 0 \},\label{eq:kernel}
\end{equation}
\begin{equation}
\|  D_{s}^{\gamma}D_{t}^{\beta}K(s,t)\| _{B(X)} \leq  C_{\gamma,\beta,N}|  t| ^{-N},\quad \forall\,  N>0\,\, \text{ if }   |t |>1.\label{eq:kernel'}
\end{equation}
\end{lem}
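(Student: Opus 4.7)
The plan is to adapt the classical dyadic decomposition and integration-by-parts argument to the present $B(X)$-valued setting. First, I differentiate under the integral in \eqref{kernel-symbol} to obtain
$$
D_s^\gamma D_t^\beta K(s,t) = \int_{\R} D_s^\gamma \sigma(s,\xi)\, (2\pi {\rm i}\xi)^\beta\, e^{2\pi {\rm i} t\cdot \xi}\, d\xi;
$$
the interchange is justified by first working with the compactly supported approximations $\sigma_j$ from \eqref{eq: compact symbol} (where the integral is absolutely convergent) and then passing to the limit using the uniform symbol estimates on the $\sigma_j$'s.

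Next I split this oscillatory integral via the Littlewood-Paley cutoffs $\widehat{\varphi}_k$ from \eqref{eq: supp varphi_j},
$$
D_s^\gamma D_t^\beta K(s,t) = \sum_{k\geq 0} K^{(k)}(s,t), \qquad K^{(k)}(s,t) := \int_{\R} D_s^\gamma \sigma(s,\xi)\,\widehat{\varphi}_k(\xi)\,(2\pi {\rm i}\xi)^\beta\,e^{2\pi {\rm i} t\cdot \xi}\, d\xi,
$$
and derive two bounds for each $K^{(k)}$. On the support of $\widehat{\varphi}_k$ one has $|\xi|\lesssim 2^k$, so the trivial size estimate gives $\|K^{(k)}(s,t)\|_{B(X)}\lesssim 2^{k(n+\delta|\gamma|_1+|\beta|_1+d)}$. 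To produce $t$-decay I integrate by parts $2M$ times in $\xi$ using the identity $\Delta_\xi^M e^{2\pi {\rm i} t\cdot \xi}=(-4\pi^2|t|^2)^M e^{2\pi {\rm i} t\cdot\xi}$; combining the Leibniz rule with the $S^n_{1,\delta}$-estimate on $D_s^\gamma D_\xi^{\beta'}\sigma$, together with the fact that each $\xi$-derivative of $\widehat{\varphi}_k$ costs $2^{-k}$ and $|\xi^\beta|\lesssim 2^{k|\beta|_1}$ on the relevant annulus, I obtain, for every $M\in\mathbb{N}$,
$$
\|K^{(k)}(s,t)\|_{B(X)}\lesssim |t|^{-2M}\,2^{k(n+\delta|\gamma|_1+|\beta|_1+d-2M)}.
$$

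For $|t|>1$, I fix an arbitrary $N$, pick $M$ large enough that both $2M\geq N$ and $n+\delta|\gamma|_1+|\beta|_1+d-2M<0$, and sum the second bound over $k\geq 0$; the geometric series converges and yields $|t|^{-2M}\lesssim |t|^{-N}$, proving \eqref{eq:kernel'}. For $0<|t|\leq 1$, let $j\in\mathbb{N}_0$ satisfy $2^{-j-1}<|t|\leq 2^{-j}$ and split the series at $k=j$: apply the trivial bound for $k\leq j$ and the integration-by-parts bound, with $M$ chosen large enough, for $k>j$. Both partial sums are geometric and dominated by $|t|^{-(n+\delta|\gamma|_1+|\beta|_1+d)}$; since $\delta\leq 1$ and $|t|\leq 1$, this is in turn bounded by $|t|^{-(n+|\gamma|_1+|\beta|_1+d)}$, yielding \eqref{eq:kernel}. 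The remaining range $|t|\geq 1$ of \eqref{eq:kernel} follows immediately from \eqref{eq:kernel'} by choosing $N$ suitably large.

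The main technical obstacle is the bookkeeping in the integration-by-parts step: one must track how the $2M$ $\xi$-derivatives produced by $\Delta_\xi^M$ distribute via Leibniz among the three factors $\sigma$, $\widehat{\varphi}_k$ and $\xi^\beta$, so that the contributed powers of $2^k$ add up correctly to the claimed exponent $n+\delta|\gamma|_1+|\beta|_1+d-2M$ once one accounts for the volume $2^{kd}$ of the frequency annulus. Nothing in the argument is sensitive to the $B(X)$-valued nature of $\sigma$, since only the operator norm enters the estimates and the $\xi$-integration is a standard Bochner integral with respect to Lebesgue measure.
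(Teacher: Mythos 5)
Your proof is correct in substance but takes a genuinely different route from the paper's. The paper observes that for unit vectors $x\in X$, $x^*\in X^*$, the scalar function $\langle x^*,\sigma(s,\xi)x\rangle$ lies in the scalar H\"ormander class $S^n_{1,\delta}$ with constants uniform in $x,x^*$, cites the classical scalar-valued kernel estimates (e.g.\ Torres, Lemma~5.1.6) for $\langle x^*,K(s,t)x\rangle$, and then takes a supremum over $x,x^*$ to recover the $B(X)$-norm bound. You instead reprove the scalar result from scratch in the $B(X)$-valued setting: Littlewood--Paley decomposition of the symbol, the trivial size bound on each dyadic piece, integration by parts in $\xi$ via $\Delta_\xi^M$ for $|t|$-decay, and a split of the dyadic sum at $2^{-j}\approx|t|$. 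The bookkeeping in the Leibniz step (each $\xi$-derivative costing $2^{-k}$ from $\sigma$, $\widehat\varphi_k$ and $\xi^\beta$ alike on the $k$-th annulus) is carried out correctly, and your intermediate bound $\|K^{(k)}\|_{B(X)}\lesssim |t|^{-2M}2^{k(n+\delta|\gamma|_1+|\beta|_1+d-2M)}$ is exactly the content of the paper's Lemma~\ref{lem: dyadic kernel estimate}. The trade-off is clear: the paper's reduction is shorter and outsources the hard work to the scalar literature, whereas your argument is self-contained and makes the dyadic mechanism visible (which is then reused elsewhere). One small caveat worth acknowledging: in the split $\sum_{k\le j}+\sum_{k>j}$, the low-frequency geometric sum is controlled by $2^{j(n+\delta|\gamma|_1+|\beta|_1+d)}$ only when the exponent $n+\delta|\gamma|_1+|\beta|_1+d$ is strictly positive; in the borderline case it produces a logarithm in $|t|$. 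This degenerate case (e.g.\ $n=-d$, $\gamma=\beta=0$, cf.\ the Bessel kernel $G_d$) is one where the stated power bound itself is delicate, and is irrelevant to the paper's applications (only $n=0$ is used, where all exponents are $\ge d>0$), but a careful write-up should either restrict the exponent range or note that the constant is allowed to depend on $n,\gamma,\beta$ lying in the non-degenerate regime.
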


\begin{proof}
This lemma can be deduced easily from the corresponding scalar-valued results, which can be found in many classical works on pseudo-differential operators, for instance, \cite[Lemma 5.1.6]{Torres-1990}. Given $x\in X$ and $x^*\in X^*$ with norms equal to one, it is clear that $\langle x^*, \sigma(s,t)x \rangle$ is a scalar-valued symbol in $S_{1,\delta}^n$, with distribution kernel  $\langle x^*, K(s,t)x \rangle$. Thus, we have 
$$
\langle  x^*, D_{s}^{\gamma}D_{t}^{\beta}K(s,t)x\rangle = D_{s}^{\gamma}D_{t}^{\beta}[\langle x^*, K(s,t)x \rangle ] \leq  C_{\gamma,\beta}|  t|  ^{-  |\gamma |_1 -  |\beta |_1-d-n},\quad \forall\, t\in\mathbb{R}^{d}\setminus  \{ 0 \}
$$
and
$$
\langle  x^*, D_{s}^{\gamma}D_{t}^{\beta}K(s,t)x\rangle = D_{s}^{\gamma}D_{t}^{\beta}[\langle x^*, K(s,t)x \rangle ] \leq  C_{\gamma,\beta,N}|  t| ^{-N},\quad \forall\, N>0 \text{ if }   |t |>1.
$$
Then, taking the supremum over $x$ and $x^*$ in the above two inequalities, we get the desired assertion. 
\end{proof}

In the classical case, the proof the above lemma makes use of the decomposition of the symbol $\sigma$ into dyadic pieces. Let $(\widehat{\varphi}_k)_{k\geq 0}$ be the resolution of the unit satisfying \eqref{eq:resolution of unity}. Set 
\begin{equation}\label{eq: sigma_k}
\sigma_k(s,\xi)=\sigma(s,\xi)\widehat{\varphi}_k(\xi), \quad \forall\, (s,\xi)\in \R\times \R. 
\end{equation}
By a similar argument as in the above proof, we also have the following estimates of the corresponding kernels of these pieces $\sigma_k$'s. 
 
 \begin{lem}\label{lem: dyadic kernel estimate}
Let $\sigma\in S_{1,\delta}^{n}$ and $ \sigma_k$ be as in \eqref{eq: sigma_k} and $K_k(s,t)=\int_{\R} \sigma _k(s,\xi)e^{2\pi {\rm i}t\cdot \xi} d\xi$. Then 
$$
\|D_{s}^{\gamma}D_{t}^{\beta}K_k (s,t )\| _{B(X)}\lesssim  |t |^{-2M} 2^{k(|\beta |_1+|\gamma |_1+d-2M+n)}, \quad \forall\, M\in \mathbb{N}_0.
$$
 \end{lem}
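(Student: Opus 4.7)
The plan is to follow the strategy of Lemma \ref{Lem T K} but to track the dyadic localization of $\widehat{\varphi}_k$ carefully so as to produce the stated power of $2^k$. As in that lemma, I first reduce to scalar symbols by pairing with unit vectors $x\in X$ and $x^*\in X^*$: it suffices to establish the claimed bound with $B(X)$ replaced by $\mathbb C$ for the scalar symbol $\langle x^*,\sigma(\cdot,\cdot)x\rangle \in S^n_{1,\delta}$, and then take the supremum over $x,x^*$ at the end.

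For the scalar symbol, differentiating under the integral gives
$$
D_s^\gamma D_t^\beta K_k(s,t) = (2\pi i)^{|\beta|_1}\int_{\R} D_s^\gamma \sigma(s,\xi)\,\widehat{\varphi}_k(\xi)\,\xi^\beta\, e^{2\pi i t\cdot \xi}\, d\xi.
$$
To extract the decay $|t|^{-2M}$, I would exploit the identity $e^{2\pi i t\cdot \xi}=(2\pi |t|)^{-2M}(-\Delta_\xi)^M e^{2\pi i t\cdot \xi}$ and integrate by parts $2M$ times in $\xi$; the compact support of $\widehat{\varphi}_k$ kills every boundary term. Next I would expand $(-\Delta_\xi)^M$ via Leibniz as a fixed linear combination of terms of the form $D_\xi^{\alpha_1}D_s^\gamma\sigma \cdot D_\xi^{\alpha_2}\widehat{\varphi}_k \cdot D_\xi^{\alpha_3}\xi^\beta$ with $|\alpha_1|_1+|\alpha_2|_1+|\alpha_3|_1=2M$.

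For each such term, three elementary pointwise bounds are in order: the symbol condition gives $|D_\xi^{\alpha_1}D_s^\gamma\sigma(s,\xi)|\lesssim (1+|\xi|)^{n+\delta|\gamma|_1-|\alpha_1|_1}$; the scaling $\widehat{\varphi}_k(\xi)=\varphi(2^{-k}\xi)$ for $k\geq 1$ gives $|D_\xi^{\alpha_2}\widehat{\varphi}_k(\xi)|\lesssim 2^{-k|\alpha_2|_1}$; and differentiation of the monomial gives $|D_\xi^{\alpha_3}\xi^\beta|\lesssim |\xi|^{|\beta|_1-|\alpha_3|_1}$ (with the term vanishing unless $\alpha_3\leq \beta$ componentwise). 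On $\supp \widehat{\varphi}_k$ one has $|\xi|\sim 2^k$ and the support has measure $\sim 2^{kd}$, so multiplying the three bounds, integrating in $\xi$, and using $\delta\leq 1$, yields
$$
|D_s^\gamma D_t^\beta K_k(s,t)| \lesssim |t|^{-2M}\,2^{k(n+|\gamma|_1+|\beta|_1+d-2M)}.
$$
Taking the supremum over unit vectors $x,x^*$ recovers the $B(X)$-bound.

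The only real bookkeeping point is the case $k=0$, where $\widehat{\varphi}_0$ is supported in $\{|\xi|\leq 2\}$ rather than on an annulus; but there $(1+|\xi|)\sim 1\sim 2^0$, the derivatives of $\widehat{\varphi}_0$ are uniformly bounded, and the support has measure $O(1)=O(2^{0\cdot d})$, so all estimates carry through uniformly in $k\geq 0$. I do not anticipate any deeper obstacle: the argument is a standard stationary-phase-type integration by parts, complicated only slightly by the need to distribute the Laplacian across three factors via Leibniz and to verify that the three scaling contributions in $k$ combine correctly.
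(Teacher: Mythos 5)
Your proposal is correct and follows the same outline the paper has in mind: the paper proves this by invoking ``a similar argument as in the above proof'' of Lemma~\ref{Lem T K}, i.e.\ reduce to the scalar case by pairing with unit vectors $x\in X$, $x^*\in X^*$, and then appeal to the classical kernel estimate for dyadic pieces of a scalar symbol. You take exactly that route, and in addition carry out the classical scalar computation (integration by parts via $(-\Delta_\xi)^M$, Leibniz over the three factors, and the $|\xi|\sim 2^k$, $|\supp\widehat{\varphi}_k|\sim 2^{kd}$ bookkeeping, with the harmless $k=0$ case checked separately); the exponents combine exactly as claimed after using $\delta\le 1$, so the argument is a valid and self-contained filling-in of the detail the paper leaves implicit.
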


Now we study the composition of pseudo-differential operators.The following proposition gives a rule of the composition of two pseudo-differential operators. In particular,  it shows that the symbol class $S_{1,\delta}^0$ is closed under product. Different from the proof of Lemma \ref{Lem T K}, we can not  reduce that of the following proposition to the scalar-valued case. So we need to perform an argument which is similar to the classical case. We refer the reader to \cite[Theorem 2.5.1]{R-T} or \cite[p. 237]{Stein1993}, where the case $\delta=0$ is dealt with in the classical setting. However, for the case $0<\delta<1$, the remainder of the Taylor expansion of $\sigma_1$ is much harder to handle, which requires a subtler expansion of $\sigma_1$.

\begin{prop}\label{prop: composition}
Let $0\leq \delta<1$ and $\sigma_1$, $\sigma_2$ be two symbols in $S_{1,\delta}^{n_1}$ and $S_{1,\delta}^{n_2}$ respectively. There exists a symbol $\sigma_3$ in $S_{1,\delta}^{n_1+n_2}$ such that 
$$
T_{\sigma_3} =T_{\sigma_1}   T_{\sigma_2} .
$$
Moreover, 
\begin{equation}\label{eq: asymptotic formula}
\sigma_3-\sum_{|\gamma|_1<N_0} \frac{(2\pi {\rm i})^{-|\gamma|_1}}{\gamma !}D_\xi^\gamma \sigma_1 D_s^\gamma \sigma_2 \in S_{1,\delta}^{n_1+n_2-(1-\delta)N_0}, \quad \forall\, N_0\geq 0.
\end{equation}
\end{prop}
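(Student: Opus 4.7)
The plan is to derive an explicit integral formula for $\sigma_3$ from the composition, then extract the asymptotic expansion via Taylor's theorem in the frequency variable, and finally estimate the remainder carefully. I would start with $f\in \mathcal{S}(\R;X)$ and use the compactly supported approximants $\sigma_{1,j},\sigma_{2,j}$ from \eqref{eq: compact symbol} to legitimize all integral manipulations before passing to the limit. Writing $\widetilde{\sigma}_2(\tau,\xi)=\int_{\R}e^{-2\pi{\rm i}y\cdot\tau}\sigma_2(y,\xi)\,dy$ for the Fourier transform of $\sigma_2$ in its first slot, a direct computation gives $\widehat{T_{\sigma_2}f}(\eta)=\int_{\R}\widetilde{\sigma}_2(\eta-\xi,\xi)\widehat{f}(\xi)\,d\xi$. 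Substituting into $T_{\sigma_1}(T_{\sigma_2}f)(s)$ and changing variables $\tau=\eta-\xi$ identifies the composed symbol as
\[
\sigma_3(s,\xi)=\int_{\R}\int_{\R}e^{2\pi{\rm i}(s-y)\cdot\tau}\sigma_1(s,\xi+\tau)\sigma_2(y,\xi)\,dy\,d\tau,
\]
understood as an oscillatory integral.

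To produce \eqref{eq: asymptotic formula}, I would apply Taylor's formula to $\sigma_1(s,\xi+\tau)$ in the variable $\tau$ at $\tau=0$, obtaining
\[
\sigma_1(s,\xi+\tau)=\sum_{|\gamma|_1<N_0}\frac{\tau^\gamma}{\gamma!}D_\xi^\gamma\sigma_1(s,\xi)+R_{N_0}(s,\xi,\tau),
\]
with the usual integral form of the remainder. For each Taylor monomial, integration by parts in $y$ transfers the factor $\tau^\gamma$ into $(2\pi{\rm i})^{-|\gamma|_1}D_y^\gamma\sigma_2(y,\xi)$ under the integral, and Fourier inversion in $\tau$ then collapses the double integral to
\[
\frac{(2\pi{\rm i})^{-|\gamma|_1}}{\gamma!}\,D_\xi^\gamma\sigma_1(s,\xi)\,D_s^\gamma\sigma_2(s,\xi),
\]
which are exactly the leading terms in the statement.

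The main obstacle is estimating the remainder
\[
R(s,\xi)=N_0\!\sum_{|\gamma|_1=N_0}\frac{1}{\gamma!}\int_0^1(1-t)^{N_0-1}\!\!\int_{\R}\!\!\int_{\R}e^{2\pi{\rm i}(s-y)\cdot\tau}\tau^\gamma D_\xi^\gamma\sigma_1(s,\xi+t\tau)\sigma_2(y,\xi)\,dy\,d\tau\,dt
\]
together with all its $(s,\xi)$-derivatives, and showing that $R\in S_{1,\delta}^{n_1+n_2-(1-\delta)N_0}$. The key device is to split the $\tau$-integration into the regions $|\tau|\leq\tfrac12(1+|\xi|)$, where $(1+|\xi+t\tau|)\approx(1+|\xi|)$ so that the symbol bounds for $D_\xi^{\gamma+\beta}\sigma_1$ apply directly, and $|\tau|\geq\tfrac12(1+|\xi|)$, where many integrations by parts in $y$ convert arbitrary negative powers of $|\tau|$ from the oscillatory factor into additional $D_y$-derivatives falling on $\sigma_2(y,\xi)$. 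Each such extra derivative on $\sigma_2$ costs a factor $(1+|\xi|)^\delta$, so the effective gain per Taylor step is only $(1+|\xi|)^{-1}\cdot(1+|\xi|)^{\delta}=(1+|\xi|)^{-(1-\delta)}$; this is precisely where the hypothesis $\delta<1$ is indispensable, and it explains the exponent $(1-\delta)N_0$ in \eqref{eq: asymptotic formula}. The $s$- and $\xi$-derivatives of $\sigma_3$ are controlled by differentiating under the integral and repeating the region-splitting argument, which yields the full $S_{1,\delta}$ bounds for $\sigma_3$ and for each finite-order remainder.
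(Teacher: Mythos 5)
Your reduction to the oscillatory-integral formula for $\sigma_3$ and the plan to Taylor-expand $\sigma_1(s,\xi+\tau)$ at $\tau=0$ are the same as the paper's, and the heuristic you state (each Taylor step buys $(1+|\xi|)^{-1}$ but each compensating $D_y$-derivative on $\sigma_2$ costs $(1+|\xi|)^{\delta}$, for a net $(1+|\xi|)^{-(1-\delta)}$ per step) is correct. However, there is a genuine gap in the remainder estimate. You stop the Taylor expansion at exactly order $N_0$, so the remainder contains $\tau^\gamma$ with $|\gamma|_1=N_0$. To make the $\tau$-integral absolutely convergent you must integrate by parts in $y$ at least $N_0+d+1$ times (to beat both the $|\tau|^{N_0}$ factor and the $d$-dimensional volume). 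Each of those integrations by parts costs $(1+|\xi|)^{\delta}$, so the best you can get this way is
\[
(1+|\xi|)^{\,n_1-N_0}\cdot(1+|\xi|)^{\,n_2+\delta(N_0+d+1)}=(1+|\xi|)^{\,n_1+n_2-(1-\delta)N_0+\delta(d+1)},
\]
which is worse than the target $S_{1,\delta}^{n_1+n_2-(1-\delta)N_0}$ by a fixed positive power $\delta(d+1)$. Splitting at $|\tau|\approx\tfrac12(1+|\xi|)$ does not rescue this: on the inner region you are still left with the extra volume factor, and on the outer region the same $(1+|\xi|)^{\delta\cdot(\text{IBP order})}$ penalty appears. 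The paper's fix, which your proposal omits, is to Taylor-expand to a strictly higher order $N$, tie $N$ to the integration-by-parts order $\widetilde N$ by $N=\widetilde\delta\,\widetilde N$ with $\delta<\widetilde\delta<1$, and then observe that the extra middle-range Taylor terms $D_\xi^\gamma\sigma_1\,D_s^\gamma\sigma_2$ with $N_0\le|\gamma|_1<N$ already lie in $S_{1,\delta}^{n_1+n_2-(1-\delta)N_0}$ and can be swept into the error; only the top-order remainder needs the split-and-IBP estimate, and the extra power of $|\tau|$ it carries is exactly what absorbs the volume penalty. Without this decoupling of the two parameters your estimate does not close.

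A second, more technical omission: your bound on $\widehat{\sigma}_2(\tau,\xi)$ via integration by parts carries the factor $|\Omega|$, the Lebesgue measure of the (temporarily assumed compact) $y$-support of $\sigma_2$, and this factor diverges when you try to pass from the truncated symbols $\sigma_{2,j}$ to a general $\sigma_2$. The paper handles this by decomposing $\sigma_2=\sum_k\mathcal X_k\sigma_2$ with the unit-scale resolution of the identity, proving the asymptotic expansion with $|\Omega|$-independent constants for each piece, and then exploiting the finite-overlap (disjoint-support within each residue class of $(\mathbb Z/2\mathbb Z)^d$) to sum the pieces without loss. Your proposal as written simply ``passes to the limit'' without addressing this, which leaves the general case unproven.
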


\begin{proof}
Firstly, we assume that $\sigma_1$ and $\sigma_2$ have compact supports, so we can use \eqref{eq: repeated integral} as an alternate definition of $T_{\sigma_1} $ and $T_{\sigma_2} $. 
In this way,  $T_{\sigma_1}   T_{\sigma_2} $ can be written as follows:
$$
T _{\sigma_1}(T  _{\sigma_2}f)(s)=\int_{\R}\int_{\R}\sigma_3(s,\xi)f(r)e^{2\pi {\rm i} (s-r)\cdot \xi}dr d\xi,
$$
where 
\begin{equation}\label{eq: sgma3}
\begin{split}
\sigma_3(s,\xi) &  =\int_{\R}\int_{\R}\sigma_1(s,\eta)\sigma_2(t,\xi)e^{2\pi {\rm i}(s-t)\cdot (\eta-\xi)}dtd\eta\\
&  = \int_{\R}\sigma_1(s,\xi+\eta)\widehat{\sigma}_2(\eta,\xi) e^{-2\pi {\rm i}s\cdot\eta} d\eta
\end{split}
\end{equation}
with $\widehat{\sigma}_2$ the Fourier transform of $\sigma_2$ with respect to the first variable. 
We expand $\sigma_1(s,\xi+\eta)$ by the Taylor formula:
$$
\sigma_1(s,\xi+\eta)= \sum_{|\gamma|_1<N_0} \frac{1}{\gamma !}D_\xi ^\gamma \sigma_1(s,\xi) \eta^\gamma + \sum_{N_0\leq |\gamma|_1<N}  \frac{1}{\gamma !}D_\xi ^\gamma \sigma_1(s,\xi) \eta^\gamma +R_N(s,\xi, \eta), 
$$
with the remainder 
$$R_N(s,\xi, \eta)=\sum_{|\gamma|_1=N}\frac{1}{\gamma !}\int_{0}^1 D_\xi ^\gamma \sigma_1(s,\xi +\theta \eta)(1-\theta)^N \eta^\gamma d\theta . $$ 
Now we replace $\sigma_1(s,\xi+\eta)$ in \eqref{eq: sgma3} by the above Taylor polynomial and remainder. Notice that 
$$
\frac{1}{\gamma !}\int_{\R}D_\xi ^\gamma \sigma_1(s,\xi) \eta^\gamma  \widehat{\sigma}_2(\eta,\xi)e^{-2\pi {\rm i}s\cdot\eta} d\eta= \frac{(2\pi {\rm i})^{-|\gamma|_1}}{\gamma !}D_\xi ^\gamma \sigma_1(s,\xi) D_s^\gamma \sigma_2(s,\xi).
$$
Thus, 
\begin{equation}\label{sigma3-expand}
\begin{split}
\sigma_3(s,\xi) &= \big( \sum_{|\gamma|_1<N_0}+  \sum_{N_0\leq |\gamma|_1<N}  \big)\frac{(2\pi {\rm i})^{-|\gamma|_1}}{\gamma !}D_\xi ^\gamma \sigma_1(s,\xi) D_s^\gamma \sigma_2(s,\xi)\\
&\;\;\;\; +\int_{\R}R_N(s,\xi, \eta)\widehat{\sigma}_2(\eta,\xi) e^{-2\pi {\rm i}s\cdot\eta} d\eta.
\end{split}
\end{equation}
For every $\gamma$, the term $D_\xi ^\gamma \sigma_1(s,\xi) D_s^\gamma \sigma_2(s,\xi)$ is a symbol in $S_{1,\delta}^{n_1+n_2-(1-\delta)|\gamma |_1}$. Indeed, it is clear that 
$$
\| D_\xi ^\gamma \sigma_1(s,\xi) D_s^\gamma \sigma_2(s,\xi)\|_{B(X)}\lesssim (1+|\xi |)^{n_1-|\gamma |_1} (1+|\xi |)^{n_2+\delta |\gamma |_1}=(1+|\xi |)^{n_1+n_2-(1-\delta)|\gamma |_1}.
$$
Moreover, for any $\beta_1, \beta_2 \in \mathbb{N}_0^d$, we have $D_s^{\beta_1} \sigma_1 \in S_{1,\delta}^{n_1+\delta |\beta_1|_1} , D_s^{\beta_2} \sigma_2 \in S_{1,\delta}^{n_2+\delta |\beta_1|_1}$ and $D_\xi^{\beta_1} \sigma_1\in S_{1,\delta}^{n_1- |\beta_2|_1} , D_\xi^{\beta_2} \sigma_2 \in S_{1,\delta}^{n_2- |\beta_2|_1}$. Thus, we get
\begin{equation*}
\begin{split}
\big\| D_s^\beta [D_\xi ^\gamma \sigma_1(s,\xi) D_s^\gamma \sigma_2(s,\xi)] \big\|_{B(X)} & \lesssim \sum_{\beta_1+\beta_2=\beta} \big\|D_s^{\beta_1}D_\xi^\gamma \sigma_1(s,\xi)D_s^{\beta_2} D_s^\gamma \sigma_2(s,\xi) \big\|_{B(X)}\\
&  \lesssim (1+|\xi |)^{n_1+n_2-(1-\delta)|\gamma |_1+\delta |\beta|_1}, 
\end{split}
\end{equation*}
and 
\begin{equation*}
\begin{split}
\big\| D_\xi^\beta [D_\xi ^\gamma \sigma_1(s,\xi) D_s^\gamma \sigma_2(s,\xi)] \big\|_{B(X)} & \lesssim \sum_{\beta_1+\beta_2=\beta} \big\| D_\xi^{\gamma+\beta_1} \sigma_1(s,\xi) D_s^\gamma D_\xi^{\beta_2} \sigma_2(s,\xi) \big\|_{B(X)}\\
&  \lesssim (1+|\xi |)^{n_1+n_2-(1-\delta)|\gamma |_1-|\beta|_1}. 
\end{split}
\end{equation*}
By the above estimates, we see that when $N_0\leq |\gamma|_1<N$, $D_\xi ^\gamma \sigma_1(s,\xi) D_s^\gamma \sigma_2(s,\xi)\in S_{1,\delta}^{n_1+n_2-(1-\delta)N_0}$.

Now we have to treat the last term in \eqref{sigma3-expand}. For the remainder $R_N(s,\xi, \eta)$, we easily check that for any $|\gamma |_1=N$ and $0\leq \theta \leq 1$,  
\begin{equation}\label{eq: estimate sigma 1}
\| D_\xi^\gamma \sigma_1(s,\xi+\theta \eta) \|_{B(X)}\leq C_N (1+|\xi|)^{n_1-N}, \quad \text{if }|\xi|\geq 2|\eta|,
\end{equation}
and 
\begin{equation}\label{eq: estimate sigma 2}
\| D_\xi^\gamma \sigma_1(s,\xi+\theta \eta) \|_{B(X)}\leq C_N', \quad \forall\, \eta, \xi \in \R.
\end{equation}
For $\widehat{\sigma}_2$, by integration by parts, we see that for any $\beta\in \mathbb{N}_0^d$ such that $|\beta|_1=\widetilde{N}$, we have
\begin{equation*}
\begin{split}
(-2\pi {\rm i}\eta)^{\beta}\widehat{\sigma}_2(\eta,\xi) & =\int_{\R}(-2\pi {\rm i} \eta)^{\beta} e^{-2\pi {\rm i}t\cdot\eta}\sigma_2(t,\xi) dt\\
& =\int_{\R} D_t^\beta (e^{-2\pi {\rm i}t\cdot\eta})\sigma_2(t,\xi)dt\\
&= (-1)^\beta\int_{\R}  e^{-2\pi {\rm i}t\cdot\eta}D_t^\beta \sigma_2(t,\xi)dt. 
\end{split}
\end{equation*}
Denote the compact $t$-support of $\sigma_2(t,\xi)$ by $\Omega$. Then the above calculation immediately implies that 
\begin{equation}\label{eq: estimate hat sigma}
\| \widehat{\sigma}_2(\eta,\xi)  \|_{B(X)}\lesssim |\Omega |(1+|\eta |)^{-\widetilde{N}}(1+|\xi |)^{n_2+\delta \widetilde{N}}.
\end{equation}
We keep the constant $|\Omega|$ in this inequality for the moment, and will see in the next step that our final result does not depend on the volume of this support. 
Take $\widetilde{N}$ large enough so that 
$$\widetilde{N}>\max\big\{\frac{d}{1-\widetilde{\delta}},
\frac{(1-\delta)N_0}{\widetilde{\delta}-\delta}, \frac{d-n_1+(1-\delta)N_0}{1-2\delta}\big\},$$ 
and take $N=\widetilde{\delta}\widetilde{N}$ with $0\leq \delta<\widetilde{\delta}<1$.
Continuing the estimate of the last term in \eqref{sigma3-expand}, inequalities \eqref{eq: estimate sigma 1} and \eqref{eq: estimate hat sigma} give 
\begin{equation*}
\begin{split}
& \Big\| \int_{|\eta |\leq \frac{|\xi |}{2}}\int_{0}^1 D_\xi ^\gamma \sigma_1(s,\xi +\theta \eta)(1-\theta)^N \eta^\gamma \widehat{\sigma}_2(\eta,\xi) e^{-2\pi {\rm i}s\cdot\eta} d\theta d\eta  \Big\|_{B(X)} \\
& \lesssim \int_{\R} |\eta |^N(1+|\eta|)^{-\widetilde{N}}d\eta \cdot (1+|\xi|)^{n_1+n_2-N+\delta \widetilde{N}}\\
& \leq \int_{\R} (1+|\eta|)^{(\widetilde{\delta}-1)\widetilde{N}}d\eta\cdot (1+|\xi|)^{n_1+n_2+( \delta-\widetilde{\delta}) \widetilde{N}}\\
& \lesssim (1+|\xi|)^{n_1+n_2+( \delta-\widetilde{\delta}) \widetilde{N}}.
\end{split}
\end{equation*}
Moreover, since $\widetilde{N}\geq 
\frac{(1-\delta)N_0}{\widetilde{\delta}-\delta}$, we have 
$$
 (1+|\xi|)^{n_1+n_2+( \delta-\widetilde{\delta}) \widetilde{N}}\leq  (1+|\xi|)^{n_1+n_2-(1-\delta)N_0}. 
$$
According to \eqref{eq: estimate sigma 2} and \eqref{eq: estimate hat sigma}, we get 
\begin{equation*}
\begin{split}
& \Big\| \int_{|\eta |> \frac{|\xi |}{2}}\int_{0}^1 D_\xi ^\gamma \sigma_1(s,\xi +\theta \eta)(1-\theta)^N \eta^\gamma \widehat{\sigma}_2(\eta,\xi)e^{-2\pi {\rm i}s\cdot\eta}  d\theta d\eta \Big\|_{B(X)} \\
& \lesssim \int_{|\eta |> \frac{|\xi |}{2}} |\eta |^N(1+|\eta|)^{-\widetilde{N}}d\eta \cdot (1+|\xi|)^{n_2+\delta \widetilde{N}}\\
& \lesssim  (1+|\xi|)^{n_2+N+d-(1-\delta) \widetilde{N}}\leq (1+|\xi|)^{n_1+n_2-(1-\delta)N_0}.
\end{split}
\end{equation*}
Therefore, $R_N(s,\xi, \eta) \in S_{1,\delta}^{n_1+n_2-(1-\delta)N_0}$. Combining the estimates above, we see that, if we set $R_{N_0}(s,\xi, \eta)=\sum_{N_0\leq |\gamma|_1<N}  \frac{1}{\gamma !}D_\xi ^\gamma \sigma_1(s,\xi) \eta^\gamma +R_N(s,\xi, \eta)$, then $R_{N_0}(s,\xi, \eta) \in S_{1,\delta}^{n_1+n_2-(1-\delta)N_0}$. This proves the assertion \eqref{eq: asymptotic formula} when $\sigma_2$ has compact support with respect to the first variable.

Noticing that the above proof depends on the constant $|\Omega|$ in \eqref{eq: estimate hat sigma}, we now make use of the resolution of the unit in \eqref{eq: unit resolution} to deal with general symbol $\sigma_2$ with arbitrary $s$-support. For each $k\in \mathbb{Z}^d$, denote $\sigma_{2,k}(s,\xi) = \mathcal{X}_k(s)\sigma_2(s,\xi)$ and 
$$\sigma_{3,k}(s,\xi)  =\int_{\R}\int_{\R}\sigma_1(s,\eta)\sigma_{2,k}(t,\xi)e^{2\pi {\rm i}(s-t)\cdot (\eta-\xi)}dtd\eta\,.$$
It has already been established that
\begin{equation}\label{eq: asymptotic formula-k}
\sigma_{3,k}-\sum_{|\gamma|_1<N_0} \frac{(2\pi {\rm i})^{-|\gamma|_1}}{\gamma !}D_\xi^\gamma \sigma_1 D_s^\gamma \sigma_{2,k} \in S_{1,\delta}^{n_1+n_2-(1-\delta)N_0}, \quad \forall \,N_0>0, k\in \mathbb{Z}^d,
\end{equation}
with relevant constants uniform in $k$. 
Observe that if two symbols $b_1, b_2$ in some $S^n_{1,\delta}$ have disjoint $s$-supports, with
\begin{equation*} 
\Vert D_{s}^{\gamma}D_{\xi}^{\beta}b_i(s,\xi)\Vert _{B(X)}\leq C_{i,\gamma,\beta}(1+\vert\xi\vert)^{n+\delta |\gamma |_1- |\beta |_1}, \quad  i=1,2,
\end{equation*}
 then $b_1 +b_2 \in S^n_{1,\delta}$ with 
  \begin{equation*} 
\Vert D_{s}^{\gamma}D_{\xi}^{\beta}\big(b_1(s,\xi)+b_2(s,\xi)\big)\Vert _{B(X)}\leq \max\{C_{1,\gamma,\beta}, C_{2,\gamma,\beta}\}(1+\vert\xi\vert)^{n+\delta |\gamma |_1- |\beta |_1}.
\end{equation*}
 For our use, we construct a partition of $\mathbb{Z}^d$ with subsets $U_1, U_2,\cdots,U_{2^d}$ such that for any $k_1, k_2$ in each $U_j$, the supports $\supp \mathcal{X}_{k_1}$ and $\supp \mathcal{X}_{k_2}$ are disjoint. More precisely, let $\pi$: $\mathbb{Z}\longrightarrow \mathbb{Z}/ 2\mathbb{Z}$ be the canonical projection sending even integer to 0 and odd integer to 1. Let $\pi^d$: $\mathbb{Z}^d \longrightarrow (\mathbb{Z}/ 2\mathbb{Z})^d$ be the $d$-fold product of $\pi$. Then $(U_j)_{j\in (\mathbb{Z}/ 2\mathbb{Z})^d}=\big(( \pi^d)^{-1}(j)\big)_{j\in (\mathbb{Z}/ 2\mathbb{Z})^d}$ gives the desired partition of $\mathbb{Z}^d$. 
Summing over \eqref{eq: asymptotic formula-k} in each $U_j$, we get a symbol still in $S_{1,\delta}^{n_1+n_2-(1-\delta)N_0}$, that is,
\begin{equation*}
\sum_{k\in U_j}\sigma_{3,k}-\sum_{k\in U_j}\sum_{|\gamma|_1<N_0} \frac{(2\pi {\rm i})^{-|\gamma|_1}}{\gamma !}D_\xi^\gamma \sigma_1 D_s^\gamma \sigma_{2,k} \in S_{1,\delta}^{n_1+n_2-(1-\delta)N_0} .
\end{equation*}  
Taking the finite sum over $\{U_j\}_{1\leq j \leq 2^d}$, we get the asymptotic formula \eqref{eq: asymptotic formula} in this case.

Finally, let us get rid of the additional assumption that $\sigma_1$ and $\sigma_2$ have compact supports.  We define $\sigma_{3}^j$ as follows:
$$
T_{\sigma_{3}^j} =T_{\sigma_{1}^j}  T_{\sigma_{2}^j}  .
$$
where $ \sigma_{1}^j(s,\xi)=\sigma_1(s,\xi)\eta(2^{-j}s,2^{-j}\xi)$ and $ \sigma_{2}^j(s,\xi)=\sigma_2(s,\xi)\eta(2^{-j}s,2^{-j}\xi)$ with $\eta$ given in \eqref{eq: compact symbol}. 
Notice that the $ \sigma_{1}^j$'s and the $ \sigma_{2}^j$'s are in the class $S^{n_1}_{1,\delta}$ and $S^{n_2}_{1,\delta}$ respectively with symbolic constants uniform in $j$. Therefore, the above arguments ensure that $\sigma_{3}^j$ belongs to $S^{n_1+n_2}_{1,\delta}$ and satisfies \eqref{eq: asymptotic formula} uniformly in $j$. Passing to the limit, we get that $\sigma_3\in S^{n_1+n_2}_{1,\delta} $ and satisfies \eqref{eq: asymptotic formula}. Furthermore, by \eqref{eq: converge symbol}, we get 
$$
T_{\sigma_3} =T_{\sigma_1}  T_{\sigma_2} .
$$
The proof is complete.
\end{proof}

We end this section with the asymptotic formula for the adjoint of a pseudo-differential operator with symbol in the class $S_{1,\delta}^n$ when $0\leq \delta <1$.

\begin{prop}\label{prop: adjoint}

Let $0\leq \delta <1$, $n\in \mathbb{R}$ and $\sigma$ be a symbol in $S_{1,\delta}^n$. There exists a symbol $\widetilde{\sigma}\in S_{1,\delta}^n$ such that $T_{\widetilde{\sigma}} =(T_\sigma )^*$. Moreover, 
$$
\widetilde{\sigma} -\sum_{|\gamma|_1<N_0} \frac{(2\pi {\rm i})^{-|\gamma|_1}}{\gamma !}D_\xi^\gamma D_s^\gamma \sigma^* \in S_{1,\delta}^{n-(1-\delta)N_0}, \quad \forall\, N_0\geq 0.
$$

\end{prop}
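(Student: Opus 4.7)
The proof plan parallels that of Proposition~\ref{prop: composition}, with the first symbol trivialised and the second replaced by $\sigma^*$. First I would assume that $\sigma$ has compact support in $(s,\xi)$; the general case will be recovered at the end by the truncation \eqref{eq: compact symbol}--\eqref{eq: converge symbol}. Starting from the adjoint formula \eqref{eq: adjoint operator} and inserting the Fourier inversion $\sigma^*(t,\xi)=\int_{\R}\widehat{\sigma^*}(\eta,\xi)e^{2\pi {\rm i}t\cdot\eta}d\eta$ (the hat denoting the Fourier transform in $t$), exchanging integrals, and performing the change of variables $\xi\mapsto\xi+\eta$, I would read off
\begin{equation*}
\widetilde{\sigma}(s,\xi)=\int_{\R}\widehat{\sigma^*}(\eta,\xi+\eta)\,e^{2\pi {\rm i}s\cdot\eta}\,d\eta,
\end{equation*}
which is the operator-valued analogue of the classical formula for the adjoint symbol.

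Next I would Taylor expand $\sigma^*(t,\xi+\eta)$ in its second variable at $\xi$,
\begin{equation*}
\sigma^*(t,\xi+\eta)=\sum_{|\gamma|_1<N_0}\frac{\eta^\gamma}{\gamma!}D_\xi^\gamma\sigma^*(t,\xi)+\sum_{N_0\leq|\gamma|_1<N}\frac{\eta^\gamma}{\gamma!}D_\xi^\gamma\sigma^*(t,\xi)+R_N(t,\xi,\eta),
\end{equation*}
take the Fourier transform in $t$, and apply the identity $\int_{\R}\eta^\gamma\widehat f(\eta)\,e^{2\pi {\rm i}s\cdot\eta}\,d\eta=(2\pi {\rm i})^{-|\gamma|_1}D_s^\gamma f(s)$ to convert each polynomial term into $\frac{(2\pi {\rm i})^{-|\gamma|_1}}{\gamma!}D_s^\gamma D_\xi^\gamma\sigma^*(s,\xi)$. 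The terms with $|\gamma|_1<N_0$ form the desired expansion; those with $N_0\leq|\gamma|_1<N$ lie in $S_{1,\delta}^{n-(1-\delta)|\gamma|_1}\subset S_{1,\delta}^{n-(1-\delta)N_0}$ by the same arithmetic of derivative orders already used in Proposition~\ref{prop: composition}.

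The main technical step is to place the integral of the remainder $\int_{\R}\widehat{R_N}(\eta,\xi,\eta)e^{2\pi {\rm i}s\cdot\eta}d\eta$ in $S_{1,\delta}^{n-(1-\delta)N_0}$. I would split the $\eta$-integral into $|\eta|\leq|\xi|/2$ and $|\eta|>|\xi|/2$, estimate $D_\xi^\gamma\sigma^*(t,\xi+\theta\eta)$ in each region exactly as in \eqref{eq: estimate sigma 1}--\eqref{eq: estimate sigma 2}, and on the Fourier side integrate by parts $\widetilde N$ times in $t$ to obtain the decay $|\Omega|(1+|\eta|)^{-\widetilde N}(1+|\xi|)^{n+\delta\widetilde N}$ as in \eqref{eq: estimate hat sigma}, where $\Omega$ is the compact $s$-support of $\sigma$. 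Choosing $\widetilde N$ large and $N=\widetilde\delta\widetilde N$ with $\delta<\widetilde\delta<1$, the very arithmetic of the composition proof yields $(1+|\xi|)^{n-(1-\delta)N_0}$; differentiating under the integral in $s$ and $\xi$ produces the full symbol estimates, so $\widetilde\sigma\in S_{1,\delta}^n$ along with the claimed asymptotic.

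To dispense with the compact-support assumption, I would apply the resolution $\sum_{k\in\Z}\mathcal X_k=1$ of \eqref{eq: unit resolution}, decompose $\sigma=\sum_k\mathcal X_k\sigma$, and reassemble $\widetilde\sigma$ from the pieces $\widetilde\sigma_k$ by summing over the $2^d$ disjoint families $(U_j)$ introduced in Proposition~\ref{prop: composition}, so that the volume factor $|\Omega|$ from the remainder estimate does not accumulate. This control of $|\Omega|$ uniformly in $k$ is the main obstacle; once the partition-of-unity trick absorbs it, a final passage to the limit via \eqref{eq: compact symbol}--\eqref{eq: converge symbol} removes the compact-support hypothesis on $\sigma$ itself and completes the proof.
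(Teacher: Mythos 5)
Your proposal follows essentially the same route as the paper's proof: derive the formula $\widetilde\sigma(s,\xi)=\int_{\R}\widehat{\sigma}^*(\eta,\xi+\eta)e^{2\pi\mathrm{i}s\cdot\eta}d\eta$, Taylor expand in the second variable (expanding $\sigma^*(t,\xi+\eta)$ before Fourier transforming in $t$ is equivalent to the paper's expansion of $\widehat{\sigma}^*(\eta,\xi+\eta)$, since $D_\xi$ and the $t$-Fourier transform commute), identify the polynomial terms, estimate the remainder by the same $|\eta|\lessgtr|\xi|/2$ split used in Proposition~\ref{prop: composition}, absorb the $|\Omega|$ factor by the partition-of-unity trick, and remove compactness by the $\eta(2^{-j}\cdot,2^{-j}\cdot)$ truncation. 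The paper compresses these steps into a short reference to the composition proof, but the argument is the same.
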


  \begin{proof}
The proof is similar to that of Proposition  \ref{prop: composition}. By \eqref{eq: adjoint operator}, we get the formal expression of $\widetilde{\sigma}$ that 
\begin{equation*}
\begin{split}
\widetilde{\sigma}(s,\xi) & = \int_{\R}\int_{\R} \sigma^*(t,\eta)e^{2\pi {\rm i}(s-t)\cdot (\eta -\xi)} dtd\eta \\
& = \int_{\R}\widehat{{\sigma}}^*(\eta, \xi+\eta)e^{2\pi {\rm i}s\cdot \eta} d\eta, 
\end{split}
\end{equation*}
where $\widehat{{\sigma}}^*$ is the Fourier transform of $\sigma^*$ with respect to the first variable. By the same argument used in the proof of the previous proposition, we may focus on the symbol with compact $t$-support. Taking the Taylor expression of $\widehat{{\sigma}}^*(\eta, \xi+\eta)$, we get
$$
\widehat{{\sigma}}^*(\eta,\xi+\eta)= \sum_{|\gamma|_1<N_0} \frac{1}{\gamma !}D_\xi ^\gamma \widehat{{\sigma}}^*(\eta,\xi) \eta^\gamma + \sum_{N_0\leq |\gamma|_1<N}  \frac{1}{\gamma !}D_\xi ^\gamma \widehat{{\sigma}}^*(\eta,\xi) \eta^\gamma +R_N(\xi, \eta).
$$
As before, we can show that   
$$
\frac{1}{\gamma !}\int_{\R}D_\xi ^\gamma \widehat{{\sigma}}^*(\eta,\xi) \eta^\gamma e^{2\pi {\rm i}s\cdot\eta} d\eta= \frac{(2\pi {\rm i})^{-|\gamma|_1}}{\gamma !}D_\xi ^\gamma  D_s^\gamma \widehat{{\sigma}}^*(s,\xi)\in S_{1,\delta}^{n-(1-\delta)|\gamma|_1}.
$$
On the other hand, we can also show that
$$
\big\| \int_{\R} R_N(\xi,\eta) e^{2\pi {\rm i}s\cdot\eta} d\eta \big\|_{B(X)} \lesssim (1+|\xi |)^{n-(1-\delta)N_0}
$$
by splitting the integral over $\eta$ into two parts. Moreover, repeating the above procedure to its derivatives, we have $ \int_{\R} R_N(\xi,\eta) e^{2\pi {\rm i}s\cdot\eta} d\eta \in S_{1,\delta}^{n-(1-\delta)N_0}$. Thus, the proposition is proved.
\end{proof}

\section{The action of pseudo-differential operators on (sub)atoms}\label{section-pdo-atom}

In order to study the boundedness of pseudo-differential operators on the Triebel-Lizorkin spaces, we will use the atomic decomposition stated in Theorem \ref{thm: atomic decop T_L}. In other words, we will focus on the images of the atoms under the action of  pseudo-differential operators instead of those of general functions in the Triebel-Lizorkin spaces.

In the sequel, we will only consider pseudo-differential operators whose symbols take values in some von Neumann algebra $\M$. If we take $X=L_1(\M)+\M$, then $\M$ admits an isometric embedding into $B(X)$ by left multiplication.
In this way, these $\M$-valued symbols can be seen as a special case of the $B(X)$-valued symbols defined in the previous section. On the other hand, if we embed $\M$ into $B(X)$ by right multiplication, we get another kind of $\M$-valued symbol actions. Accordingly, we define
$$
T_\sigma^c f(s)=\int_{\mathbb{R}^{d}}\sigma(s,\xi)\widehat{f}(\xi)e^{2\pi {\rm i}s\cdot \xi}d\xi $$
and 
$$T^r_\sigma f(s)=\int_{\mathbb{R}^{d}}\widehat{f}(\xi)\sigma(s,\xi)e^{2\pi {\rm i}s\cdot \xi}d\xi.$$
All the conclusions proved in the previous section still hold for both $T_\sigma^c $ and $T_\sigma^r $ in parallel. In the following sections, we mainly focus on the operators $T_\sigma^c $.

The first  lemma in this section  concerns the image of an $(\alpha,Q_{\mu, l})$-subatom under the action of pseudo-differential operators.

 \begin{lem}
\label{lem:moledule}Let $\alpha\in \mathbb{R}$, $\sigma\in S_{1,\delta}^{0}$
and $T^c_{\sigma}$ be the corresponding pseudo-differential operator. In addition, we assume that  $K>\frac{d}{2}$. Then for any  $(\alpha,Q_{\mu, l})$-subatom $a_{\mu, l}$, we have
\begin{equation}\label{eq:moledule}
\tau  \big(\int _{\R}  (1+2^{\mu}  |s-2^{-\mu}l | )^{d+M}  |D^{\gamma}T^c_{\sigma}a_{\mu, l}(s) |^{2}ds \big)^{\frac{1}{2}}\lesssim  |Q_{\mu, l} |^{\frac{\alpha}{d}-\frac{  |\gamma |_1}{d}},\quad |\gamma|_1< K-\frac{d}{2},
\end{equation}
where $M\in \mathbb{R}$ such that $M<2L+2$ and the relevant constant depends on $M$, $K$, $L$, $\gamma$ and $d$.
\end{lem}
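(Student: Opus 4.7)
The plan is to split $\R$ into a \emph{near} part $N := \{s : |s - 2^{-\mu}l| \leq C 2^{-\mu}\}$ for some fixed $C>2$ and its complement, the \emph{far} part $F$. On $N$ the weight $(1+2^\mu|s-2^{-\mu}l|)^{d+M}$ is uniformly bounded, so the desired estimate reduces to the unweighted bound $\|D^\gamma T^c_\sigma a_{\mu,l}\|_{L_2(\N)} \lesssim |Q_{\mu,l}|^{\alpha/d-|\gamma|_1/d}$. I would derive this by using Proposition \ref{prop: composition} to expand $D^\gamma\circ T^c_\sigma$ into a finite sum of PDOs of the form $T^c_{\sigma^{(j)}}\circ D^{\gamma^{(j)}}$ with $\sigma^{(j)}\in S^0_{1,\delta}$ and $|\gamma^{(j)}|_1\leq |\gamma|_1$, plus a sufficiently low-order remainder. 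Each $T^c_{\sigma^{(j)}}$ is bounded on $L_2(\N)$ (a Calder\'on--Vaillancourt-type statement, valid because $\delta<1$), so the problem reduces to controlling $\|D^{\gamma^{(j)}}a_{\mu,l}\|_{L_2(\N)}$. The subatom smoothness gives $\|D^{\gamma'}a_{\mu,l}\|_{L_2(\N)} \leq |Q_{\mu,l}|^{\alpha/d-|\gamma'|_1/d}$ for $|\gamma'|_1\leq K$, which is in force because $|\gamma|_1<K-d/2\leq K$; the dominant term occurs at $|\gamma'|_1=|\gamma|_1$ since $|Q_{\mu,l}|\leq 1$.

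For $F$, I would use the kernel representation $T^c_\sigma a_{\mu,l}(s)=\int_{2Q_{\mu,l}}K(s,s-t)\,a_{\mu,l}(t)\,dt$ from \eqref{kernel-symbol}, and set $H_\gamma(s,t):=D^\gamma_s[K(s,s-t)]$. Leibniz yields $H_\gamma(s,t)=\sum_{\gamma_1+\gamma_2=\gamma}\binom{\gamma}{\gamma_1}(D^{\gamma_1}_s D^{\gamma_2}_u K)(s,s-t)$, so Lemma \ref{Lem T K} controls $H_\gamma$ and its $t$-derivatives by $|s-t|^{-|\gamma|_1-|\beta|_1-d}$, with the rapid decay \eqref{eq:kernel'} taking over for $|s-t|>1$. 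Taylor-expanding $t\mapsto H_\gamma(s,t)$ to order $L$ around $z:=2^{-\mu}l$ and invoking the vanishing moments $\int (t-z)^\beta a_{\mu,l}(t)\,dt=0$ for $|\beta|_1\leq L$ (equivalent to the subatom's $s$-moment cancellation) annihilates the polynomial part, leaving
$$
D^\gamma T^c_\sigma a_{\mu,l}(s) = \int_{2Q_{\mu,l}}R_{L+1}(s,t)\,a_{\mu,l}(t)\,dt.
$$
For $t\in 2Q_{\mu,l}$ and $s\in F$ the segment from $z$ to $t$ stays at distance $\gtrsim|s-z|$ from $s$, so Lemma \ref{Lem T K} gives $\|R_{L+1}(s,t)\|_\M \lesssim 2^{-\mu(L+1)}|s-z|^{-|\gamma|_1-L-1-d}$, and \eqref{eq:kernel'} handles $|s-z|>1$.

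The final step combines this pointwise bound with the operator Cauchy--Schwarz \eqref{op-CS} applied with the scalar weight $\phi(t)=\|R_{L+1}(s,t)\|_\M^{1/2}$, which yields
$$
|D^\gamma T^c_\sigma a_{\mu,l}(s)|^2 \leq \Big(\int_{2Q_{\mu,l}}\|R_{L+1}(s,t)\|_\M\,dt\Big)\Big(\int_{2Q_{\mu,l}}\|R_{L+1}(s,t)\|_\M\,|a_{\mu,l}(t)|^2\,dt\Big).
$$
Applying $\tau$, multiplying by $(1+2^\mu|s-z|)^{d+M}$ and integrating over $F$ reduces everything to the scalar integral $\int_{|s-z|>c 2^{-\mu}}(2^\mu|s-z|)^{d+M}|s-z|^{-2(|\gamma|_1+L+1+d)}\,ds$. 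The exponent on $|s-z|$ is $M-2|\gamma|_1-2L-2-d$, which is strictly less than $-d$ precisely because of the hypothesis $M<2L+2$; the integral then converges and evaluates to $\lesssim 2^{\mu(d+2|\gamma|_1)}$. Multiplying by $|2Q_{\mu,l}|\,\tau\int|a_{\mu,l}|^2\lesssim 2^{-\mu d}|Q_{\mu,l}|^{2\alpha/d}$ and taking the square root produces the desired far-region bound $|Q_{\mu,l}|^{\alpha/d-|\gamma|_1/d}$. The main technical obstacle is the careful noncommutative Cauchy--Schwarz step (where the non-scalar nature of $R_{L+1}$ must be absorbed into an auxiliary unitary-type factor to reduce to \eqref{op-CS}) together with the exponent bookkeeping, in which the hypothesis $M<2L+2$ is exactly what is needed for the $s$-integral in the far region to converge.
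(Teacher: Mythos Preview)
Your far-region argument is essentially the paper's: kernel representation, Taylor expansion to order $L$, moment cancellation, operator Cauchy--Schwarz, then the weighted $s$-integral whose convergence forces $M<2L+2$. (Your bookkeeping of the powers of $2^\mu$ is slightly off---the scalar $s$-integral is $\approx 2^{\mu(d+2|\gamma|_1+2L+2)}$, not $2^{\mu(d+2|\gamma|_1)}$---but this is exactly cancelled by the factor $2^{-2\mu(L+1)}$ coming from $\|R_{L+1}\|_\M^2$, which you omitted; the final outcome is correct.)

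The near-region argument, however, has a genuine gap. You invoke Proposition~\ref{prop: composition} to expand $D^\gamma\circ T^c_\sigma$ and then a Calder\'on--Vaillancourt-type $L_2(\N)$-boundedness for the resulting order-$0$ pieces. Both of these require $\delta<1$: Proposition~\ref{prop: composition} is stated only for $0\le\delta<1$, and the $L_2$-boundedness (Corollary~\ref{cor: L_2 bdd}) likewise fails for $S^0_{1,1}$. But Lemma~\ref{lem:moledule} is stated for arbitrary $\sigma\in S^0_{1,\delta}$ and is used verbatim in the forbidden-symbol section (see \eqref{eq:estimate atom} in the proof of Theorem~\ref{thm:bddforbidden class}, and also Corollary~\ref{lem:moledule m norm}), where $\delta=1$. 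So your near-region argument does not cover the case the lemma is actually needed for.

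The paper's near-region argument avoids any composition calculus or $L_2$-boundedness. After rescaling $a_{\mu,l}$ to a subatom $a$ based on $Q_{0,0}$, it applies the operator Cauchy--Schwarz \eqref{op-CS} directly on the Fourier side:
\[
|T^c_\sigma a_{\mu,l}(s)|^2 \lesssim |Q_{\mu,l}|^{2(\frac{\alpha}{d}-\frac12)}\int_{\R}(1+|\xi|^2)^{-K}\,d\xi\cdot\int_{\R}|J^K a(t)|^2\,dt,
\]
using only $\|\sigma(s,\xi)\|_\M\lesssim 1$, which holds for every $\delta\in[0,1]$. The hypothesis $K>\tfrac d2$ is exactly what makes the first integral converge, and the restriction $|\gamma|_1<K-\tfrac d2$ appears when one repeats this with the extra factor $(1+|\xi|)^{|\gamma|_1}$ coming from $D^\gamma$. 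This also explains why the lemma carries the specific threshold $K-\tfrac d2$ rather than the $K$ your approach would suggest.
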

\begin{proof}
We split the integral on the left hand side of \eqref{eq:moledule} into $\int_{4Q_{\mu, l}}$ and $\int_{(4Q_{\mu, l})^c}$. To estimate the term with $\int_{4Q_{\mu, l}}$, we begin with a technical modification of $a_{\mu, l}$. For every $a_{\mu, l}$, we define
$$
a=|Q_{\mu, l} |^{-\frac{\alpha}{d}+\frac{1}{2}}a_{\mu,l}(2^{-\mu}(\cdot+ l)).
$$
It is easy to see that $ a$ is an $(\alpha,Q_{0,0})$-subatom.
By translation, we may assume that $l=0$. Then by the Cauchy-Schwarz inequality \eqref{op-CS} and the Plancherel formula \eqref{op-Plancherel}, for any $s\in \R$, we have
\begin{equation*}
\begin{split}
 |T^c_{\sigma}a_{\mu, l}  (s ) |^{2}  &=   2^{-2\mu d}  |Q_{\mu, l} |^{2  (\frac{\alpha}{d}-\frac{1}{2} )}  \Big|\int_{\R} \sigma(s,\xi)\widehat{a}(2^{-\mu}\xi)e^{2\pi {\rm i}s\cdot \xi}d\xi \Big|^{2}\\
&   =   |Q_{\mu, l} |^{2  (\frac{\alpha}{d}-\frac{1}{2} )}  \Big|\int_{\R} \sigma(s,2^{\mu}\xi)\widehat{a}(\xi)e^{2\pi {\rm i}s\cdot 2^{\mu}\xi}d\xi \Big|^{2}\\
 &  \leq  |Q_{\mu, l} |^{2  (\frac{\alpha}{d}-\frac{1}{2} )}\int _{\R}  \| \sigma(s,2^{\mu}\xi) \|_\M ^2  (1+  |\xi |^2 )^{-K}d\xi \\
 & \;\;\;\; \int_{\R}   (1+  |\xi |^2 )^{K}  \| \sigma(s,2^{\mu}\xi) \|_\M ^{-2}\,\widehat{a}^*(\xi) |\sigma(s,2^{\mu}\xi)|^2  \widehat{a}(\xi)  d\xi\\
 &  \lesssim  |Q_{\mu, l} |^{2  (\frac{\alpha}{d}-\frac{1}{2} )} \int_{\R}   (1+  |\xi |^2 )^{-K}d\xi\cdot\int _{\R}  (1+  |\xi |^2 )^{K}  |\widehat{a}(\xi) |^{2}d\xi\\
&  \lesssim   |Q_{\mu, l} |^{2  (\frac{\alpha}{d}-\frac{1}{2} )}\int _{\R}  |J^{K}a(t) |^{2}dt,
\end{split}
\end{equation*}
where $J^K$ is the Bessel potential of order $K$.  Combining the second assumption on $a_{\mu,l}$ in Definition \ref{def:smooth T atom} and the above estimate, we obtain
\begin{equation*}
\begin{split}
\tau  \big(\int_{4Q_{\mu, l}}  |T^c_{\sigma}a_{\mu, l}  (s ) |^{2}  (1+2^{\mu}  |s | )^{d+M}ds \big)^{\frac{1}{2}}  &\lesssim\tau   \big(\int_{4Q_{\mu, l}}  |T^c_{\sigma}a_{\mu, l}  (s ) |^{2}ds  \big)^{\frac{1}{2}} \\
 &\lesssim |Q_{\mu, l} |^{\frac{\alpha}{d}}\tau  (\int _{\R}  |J^{K}a(t) |^{2}dt )^{\frac{1}{2}}\\
 &  \lesssim  |Q_{\mu, l} |^{\frac{\alpha}{d}}\sum_{|\gamma|_1\leq K}\tau   \big(\int _{\R}  |D^{\gamma}a(t) |^{2}dt  \big)^{\frac{1}{2}}\lesssim |Q_{\mu, l} |^{\frac{\alpha}{d}}.
\end{split}
\end{equation*}
If $s\in  (4Q_{\mu, l} )^{c}$, since $a_{\mu, l}$ has the
moment cancellations of order less than or equal to $L$, we can subtract a Taylor polynomial
of degree $L$ from the kernel associated to $T^c_{\sigma}$. Then, applying the estimate \eqref{eq:kernel} and the Cauchy-Schwarz inequality \eqref{op-CS}, we get
\begin{equation}\label{eq: s off origin}
\begin{split}
&  |T^c_{\sigma}a_{\mu, l}  (s ) |^{2}  =    \Big|\int_{\R}  K  (s,s-t )a_{\mu, l}  (t )dt \Big|^{2}\\
 & =    \Big|\int_{\R}   [K  (s,s-t )-K(s,s) ]a_{\mu, l}  (t )dt \Big|^{2}\\
 & =   \Big|\int _{\R}  [\sum_{|\beta |_1=L+1}\frac{L+1}{\beta !}t^\beta \int_0^1(1-\theta)^\beta D^\beta K(s,s-\theta t)d\theta  ]a_{\mu, l}  (t )dt \Big|^{2}\\
 & \lesssim \sum_{|\beta|_1=L+1} \int_{2Q_{\mu, l}}  \Big\|  \int_0^1(1-\theta)^\beta D^\beta K(s,s-\theta t)d\theta \Big\|_\M ^2  |t |^{2L+2}dt\\
 & \;\;\;\; \cdot\int _{\R}   \Big\|  \int_0^1(1-\theta)^\beta D^\beta K(s,s-\theta t)d\theta \Big\|_\M ^{-2}\, \Big| \int_0^1(1-\theta)^\beta D^\beta K(s,s-\theta t)d\theta \, a_{\mu, l}(t) \Big|^{2}dt\\
 & \leq  \sum_{|\beta|_1=L+1} \int_{2Q_{\mu, l}}\sup_{0\leq \theta\leq 1}  \|  D^{\beta}K(s,s-\theta t) \| _\M^2\,{  |t |^{2L+2}}dt\cdot\int _{\R}  |a_{\mu, l}(t) |^{2}dt\\
 & \lesssim    |s |^{-2d-2L-2}\int_{2Q_{\mu, l}}  |t |^{2L+2}dt\cdot\int _{\R}  |a_{\mu, l}(t) |^{2}dt\\
 & \lesssim  2^{-\mu(2L+2+d)}  |s |^{-2d-2L-2}\int _{\R}  |a_{\mu, l}(t) |^{2}dt.
 \end{split}
\end{equation}
This estimate implies
\begin{equation*}
\begin{split}
 &  \tau  \big(\int_{  (4Q_{\mu, l} )^{c}}  |T^c_{\sigma}a_{\mu, l}  (s ) |^{2}  (1+2^{\mu}  |s | )^{d+M}ds \big)^{\frac{1}{2}}\\
 & \lesssim  2^{-\mu(L+1-\frac{M}{2})}  \big(\int_{  (4Q_{\mu, l} )^{c}}  |s |^{-d-2L-2+M}ds \big)^{\frac{1}{2}}\cdot\tau  \big(\int _{\R}  |a_{\mu, l}(t) |^{2}dt \big)^{\frac{1}{2}}\\
 & \lesssim  2^{-\mu(L+1-\frac{M}{2})}2^{\mu(L+1-\frac{M}{2})} |Q_{\mu, l} |^{\frac{\alpha}{d}}=  |Q_{\mu, l} |^{\frac{\alpha}{d}}.
\end{split}
\end{equation*}
If we take $M=-d$ in the above inequality, we have $T_\sigma^c a_{\mu, l}\in L_1\big(\M;L_2^c(\R)\big)$. By approximation, we can assume that $\sigma(s,\xi)$ has compact $\xi$-support, so that
 $$
 T_\sigma^c a_{\mu, l}(s)=\int _{\R}\sigma (s,\xi)\widehat{a_{\mu, l}}(\xi)e^{2\pi {\rm i}s\cdot \xi}d\xi
$$
 is uniformly convergent. Moreover,  one can differentiate the integrand  and obtain always uniformly  convergent integrals.
 Then, for any  $ |\gamma|_1< K-\frac{d}{2}$, we have 
\begin{equation}\label{eq: s near origin}
\begin{split}
 & \tau  \big(\int_{4Q_{\mu, l}}  |D^{\gamma} T^c_{\sigma}a_{\mu, l}  (s ) |^{2}  (1+2^{\mu}  |s | )^{d+M}ds \big)^{\frac{1}{2}} \\
&\lesssim  |Q_{\mu, l} |^{\frac{\alpha}{d}}\tau  (\int _{\R}  |J^{K}a(t) |^{2}dt )^{\frac{1}{2}} \int_{\R}   (1+  |\xi | )^{2 |\gamma|_1-2K}d\xi\\
 & \lesssim   |Q_{\mu, l} |^{\frac{\alpha}{d}}\sum_{|\gamma|_1\leq K}\tau  \big(\int _{\R}  |D^{\gamma}a(t) |^{2}dt \big)^{\frac{1}{2}}\lesssim |Q_{\mu, l} |^{\frac{\alpha}{d}}. 
\end{split}
\end{equation}
 By a similar argument to that of  \eqref{eq: s off origin}, we have, for any $\gamma \in \mathbb{N}_0^d$ and $s\in (4Q_{\mu,l})^c$, 
\begin{equation}\label{eq: derivative s off origin}
 |D^\gamma T^c_{\sigma}a_{\mu, l}  (s ) |^{2}\lesssim 2^{-\mu(2L+2+d)}  |s |^{-2d-2L-2-2|\gamma|_1}\int _{\R}  |a_{\mu, l}(t) |^{2}dt.
\end{equation}
Therefore, we deduce that 
\begin{equation*}
\begin{split}
 &  \tau  \big(\int_{  (4Q_{\mu, l} )^{c}}  |D^{\gamma} T^c_{\sigma}a_{\mu, l}  (s ) |^{2}  (1+2^{\mu}  |s | )^{d+M}ds \big)^{\frac{1}{2}}\\
 & \lesssim  2^{-\mu(L+1-\frac{M}{2})}  \big(\int_{  (4Q_{\mu, l} )^{c}}  |s |^{-d-2L-2+M-2|\gamma |_1}ds \big)^{\frac{1}{2}}\cdot\tau  \big(\int _{\R}  |a_{\mu, l}(t) |^{2}dt \big)^{\frac{1}{2}}\\
 & \lesssim  2^{-\mu(L+1-\frac{M}{2})}2^{\mu(L+1-\frac{M}{2}+|\gamma |_1)} |Q_{\mu, l} |^{\frac{\alpha}{d}}=  |Q_{\mu, l} |^{\frac{\alpha}{d}-\frac{|\gamma |_1}{d}}.
\end{split}
\end{equation*}
Combining the estimates above, we get \eqref{eq:moledule}.
\end{proof}

On the other hand, we have the following lemma concerning the image of $(\alpha,1)$-atoms under the action of pseudo-differential operators.

 \begin{lem}
\label{lem:moledule 2}Let $\alpha\in \mathbb{R}$,  $\sigma \in S_{1,\delta}^{0}$
and $T^c_{\sigma}$ be the corresponding pseudo-differential operator. Let $K>\frac{d}{2}$ and  $b$ be an  $(\alpha,1)$-atom based on the cube $Q_{0,m}$. Then for any $M\in \mathbb{R}$, we have
\begin{equation}\label{eq:moledule 2}
\tau  \big(\int _{\R}  (1+  |s-m | )^{d+M}  |D^{\gamma}T^c_{\sigma}b(s) |^{2}ds \big)^{\frac{1}{2}}\lesssim1,\quad |\gamma|_1 < K-\frac{d}{2},
\end{equation}
where the relevant constant depends on $M$, $K$, $\gamma$ and $d$.
\end{lem}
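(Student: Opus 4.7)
The plan is to adapt the proof of Lemma \ref{lem:moledule} to the case of an $(\alpha,1)$-atom $b$, noting two simplifications and one change of argument. The simplifications are that there is no dyadic scale ($\mu=0$), so powers of $|Q_{\mu,l}|^{\alpha/d}$ disappear, and the $L_2$-bounds on the derivatives of $b$ of order up to $K$ are just $\lesssim 1$ by Definition \ref{def:smooth T atom}(1). The change is that $b$ carries no moment cancellations, so the Taylor-subtraction trick applied to the kernel in Lemma \ref{lem:moledule} on $s\in (4Q_{\mu,l})^c$ is no longer available; in its place I will use the rapid off-diagonal decay \eqref{eq:kernel'}. As in Lemma \ref{lem:moledule}, I split the integral as $\int_{\R}=\int_{4Q_{0,m}}+\int_{(4Q_{0,m})^c}$.

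On the near region $4Q_{0,m}$, the weight $(1+|s-m|)^{d+M}$ is uniformly bounded, so it suffices to estimate $\tau(\int_{4Q_{0,m}}|D^\gamma T_\sigma^c b(s)|^2 ds)^{1/2}$. After translating so that $m=0$, I mimic the first half of the proof of Lemma \ref{lem:moledule}: apply the Cauchy--Schwarz inequality \eqref{op-CS} on the Fourier side with weight $(1+|\xi|^2)^{-K}$, then use Plancherel \eqref{op-Plancherel}, to obtain
$$
|D^\gamma T^c_\sigma b(s)|^2 \lesssim \int_{\R}(1+|\xi|^2)^{|\gamma|_1-K}\,d\xi\cdot\int_{\R}|J^K b(t)|^2\,dt.
$$
The hypothesis $|\gamma|_1<K-\frac{d}{2}$ ensures that the $\xi$-integral is finite, and the second condition on an $(\alpha,1)$-atom gives $\tau(\int|J^K b|^2 dt)^{1/2}\lesssim\sum_{|\gamma|_1\le K}\tau(\int|D^\gamma b|^2 dt)^{1/2}\lesssim 1$.

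On the far region, for $s\in (4Q_{0,m})^c$ and $t\in 2Q_{0,m}=\supp b$ I have $|s-t|\ge c(1+|s-m|)>1$. By the chain rule $D_s^\gamma[K(s,s-t)]=\sum_{\gamma_1+\gamma_2=\gamma}\binom{\gamma}{\gamma_1}(D_1^{\gamma_1}D_2^{\gamma_2}K)(s,s-t)$, and each term is controlled by \eqref{eq:kernel'}, so
$$
\|D_s^\gamma[K(s,s-t)]\|_\M \lesssim_{\gamma,N} |s-m|^{-N}, \qquad \forall\, N>0.
$$
Combining this with the weighted Cauchy--Schwarz argument used in Lemma \ref{lem:moledule} and the trivial bound $\tau(\int|b(t)|^2 dt)^{1/2}\lesssim 1$ yields $|D^\gamma T^c_\sigma b(s)|^2\lesssim_N |s-m|^{-2N}$, and the resulting weighted integral on $(4Q_{0,m})^c$ converges as soon as $2N>2d+M$. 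Putting the two regions together gives \eqref{eq:moledule 2}. There is no real obstacle here: because \eqref{eq:kernel'} provides polynomial decay of arbitrary order, the absence of moment cancellations for $b$ costs nothing, and the argument is in fact strictly simpler than its counterpart in Lemma \ref{lem:moledule}.
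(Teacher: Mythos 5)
Your proof is correct and follows essentially the same route as the paper: same split into $4Q_{0,m}$ and its complement, the same Fourier-side Cauchy--Schwarz plus Plancherel argument on the near region, and the same substitution of the rapid off-diagonal decay estimate \eqref{eq:kernel'} for the Taylor-subtraction trick on the far region (exploiting $|s-t|\ge 1$). The only cosmetic difference is that you spell out the Leibniz expansion of $D_s^\gamma[K(s,s-t)]$ explicitly, which the paper leaves to the reader.
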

\begin{proof}
The proof of this lemma is similar to that of the previous one. The only difference is that for an $(\alpha,1)$-atom, we do not necessarily have the moment cancellation; in this case, we need to use the extra decay of the kernel proved in Lemma \ref{Lem T K} for $|  t|  >1$.

If $s\in 4Q_{0,m}$, we follow the estimate for subatoms in the previous lemma. Applying the size estimate of $b$, we get
$$
\tau  \big(\int_{4Q_{0,m}}  (1+  |s-m | )^{d+M}  | T^c_{\sigma}b(s) |^{2}ds \big)^{\frac{1}{2}}\lesssim \tau   ( \int_{\R} |  J^K b(t)| ^2dt )^\frac{1}{2}\lesssim  1.
$$

If $s\in (4Q_{0,m})^c$ and $t\in 2Q_{0,m}$, we have $|  s-t|  \geq 1$. Then \eqref{eq:kernel'} gives
\begin{equation*}
\begin{split}
|  T^c_{\sigma} b(s)| ^2 &=   \Big| \int_{\R}  K(s,s-t)b(t)dt  \Big|^2\\
&\leq  \int _{2Q_{0,m}}\|  K(s,s-t)\|_\M ^2 dt \int _{2Q_{0,m}}|  b(t)|  ^2dt\\
&\lesssim  |  s-m |  ^{-2N}\int_{2Q_{0,m}}  |  b(t)|  ^2dt,
\end{split}
\end{equation*}
where the positive integer $N$ can be arbitrarily large. Thus
\begin{equation*}
\begin{split}
& \tau  \big(\int_{(4Q_{0,m})^c}  (1+  |s-m | )^{d+M}  |T^c_{\sigma}b(s) |^{2}ds \big)^{\frac{1}{2}}\\
& \lesssim   \big(\int_{(4Q_{0,m})^c}|  s-m |  ^{d+M-2N}ds \big)^\frac{1}{2} \tau   \big( \int_{2Q_{0,m}}  |  b(t)|  ^2dt \big)^\frac{1}{2}\lesssim 1.
\end{split}
\end{equation*}
Then, the estimates obtained above imply that
$$
\tau  \big(\int_{\R}  (1+  |s-m | )^{d+M}  |T^c_{\sigma}b(s) |^{2}ds \big)^{\frac{1}{2}}\lesssim 1.
$$
Similarly, we treat $D^{\gamma}T^c_{\sigma}b(s)$ as
 $$
\tau  \big(\int_{\R}  (1+  |s-m | )^{d+M}  |D^{\gamma}T^c_{\sigma}b(s) |^{2}ds \big)^{\frac{1}{2}}\lesssim 1, \quad |\gamma|_1 < K-\frac{d}{2}.
$$
Therefore, \eqref{eq:moledule 2} is proved.
\end{proof}

The following lemma shows that, if the symbol $\sigma$ satisfies some support condition, we can even estimate the $F_1^{\alpha,c}$-norm of the image of $(\alpha, Q_{\mu, l})$-subatoms under $T_\sigma^c$.

\begin{lem}\label{lem:moledule norm}
Let $\sigma\in S_{1,\delta}^{0}$
and $T^c_{\sigma}$ be the corresponding pseudo-differential operator. Assume that $\alpha\in \mathbb{R}$, $K\in \mathbb{N}$ satisfy $K>\frac{d}{2}$ and $K>\alpha+d$. If the $s$-support of $\sigma$ is in $( 2^{-\mu}l+4Q_{0,0})^c$, then for any $(\alpha,Q_{\mu, l})$-subatom $a_{\mu, l}$,  we have
$$
\| T_\sigma ^c a_{\mu ,l} \|_{F_1^{\alpha,c}}\lesssim 2^{-\mu(\frac{d}{2}+\iota)},
$$
where $\iota$ is a positive real number.
\end{lem}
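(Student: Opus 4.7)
The plan is to decompose $T_\sigma^c a_{\mu,l}$ using the smooth resolution of unity $\{\mathcal{X}_k\}_{k\in\mathbb{Z}^d}$ from \eqref{eq: unit resolution} and exhibit each piece, after proper normalization, as an $(\alpha,1)$-atom at the unit cube $Q_{0,k}$. Concretely, I would write $T_\sigma^c a_{\mu,l}=\sum_{k\in\mathbb{Z}^d}b_k$ with $b_k:=\mathcal{X}_k\,T_\sigma^c a_{\mu,l}$; each $b_k$ is automatically supported in $2Q_{0,k}$, and the $s$-support hypothesis on $\sigma$ forces $T_\sigma^c a_{\mu,l}$ to vanish on $2^{-\mu}l+4Q_{0,0}$, so only those $k$ with $|k-2^{-\mu}l|_\infty>1$ contribute. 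The sum is locally finite and equals $T_\sigma^c a_{\mu,l}$ pointwise.

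To produce the atomic normalization, I will choose an integer $K_0$ with $([\alpha]+1)_+\leq K_0<K-d/2$ (the hypotheses $K>d/2$ and $K>\alpha+d$ guarantee such $K_0$ exists), so that Lemma~\ref{lem:moledule} applies at all orders $|\gamma|_1\leq K_0$. Leibniz's rule together with $\|D^\beta\mathcal{X}_k\|_\infty\lesssim 1$ reduces the estimate of $\tau(\int|D^\gamma b_k|^2\,ds)^{1/2}$ to that of $\tau(\int_{2Q_{0,k}}|D^{\gamma'}T_\sigma^c a_{\mu,l}|^2\,ds)^{1/2}$ for $|\gamma'|_1\leq K_0$. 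The key observation is that on the effective support of $b_k$ one has $|s-2^{-\mu}l|\geq\rho_k:=\max(2,|k-2^{-\mu}l|_\infty-1)$, so the weight $(1+2^\mu|s-2^{-\mu}l|)^{d+M}$ appearing in Lemma~\ref{lem:moledule} dominates $(1+2^\mu\rho_k)^{d+M}$ there. Pulling this lower bound outside the integral and invoking Lemma~\ref{lem:moledule} with $M<2L+2$ (where $L$ is the moment order of the subatom, chosen large enough) yields
\[
\tau\Bigl(\int_\R|D^\gamma b_k|^2\,ds\Bigr)^{1/2}\lesssim (1+2^\mu\rho_k)^{-(d+M)/2}\,2^{\mu(K_0-\alpha)}=:\lambda_k,
\]
so that $\lambda_k^{-1}b_k$ is an $(\alpha,1)$-atom at $Q_{0,k}$ with smoothness parameter $K_0$.

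Summing over $k\in\mathbb{Z}^d$ and splitting between $|k-2^{-\mu}l|_\infty\leq 3$ and $|k-2^{-\mu}l|_\infty>3$, one obtains $\sum_k (1+2^\mu\rho_k)^{-(d+M)/2}\lesssim 2^{-\mu(d+M)/2}$ as soon as $M>d$ (so that $\sum_{r}r^{d-1-(d+M)/2}$ converges), whence
\[
\sum_k\lambda_k\lesssim 2^{\mu(K_0-\alpha-d/2-M/2)}.
\]
Taking $K_0=([\alpha]+1)_+$ and $M>\max\bigl(d,\,2(K_0-\alpha+\iota)\bigr)$, which dictates how large the subatom's moment order $L$ must be, makes the exponent at most $-\mu(d/2+\iota)$. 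Invoking Theorem~\ref{thm: atomic decop T_L} in the easy direction (the atomic sum controls the $F_1^{\alpha,c}$-norm) then gives $\|T_\sigma^c a_{\mu,l}\|_{F_1^{\alpha,c}}\lesssim \sum_k\lambda_k\lesssim 2^{-\mu(d/2+\iota)}$. The main technical obstacle is the simultaneous calibration of $K_0$, $M$, and $L$: $K_0$ must be a valid atom smoothness order, it must be admissible in Lemma~\ref{lem:moledule}, the $k$-sum must converge, and the arithmetic must produce the extra decay $\iota>0$. Verifying that the hypotheses $K>d/2$ and $K>\alpha+d$ (together with the freedom to enlarge $L$) are exactly what is needed for this consistency is the heart of the argument.
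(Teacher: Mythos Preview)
Your approach is correct and genuinely different from the paper's. The paper estimates $\|T_\sigma^c a_{\mu,l}\|_{F_1^{\alpha,c}}$ directly via the continuous Littlewood--Paley characterization from \cite[Theorem~4.2]{XX18}: it writes the norm as $\|\Phi_0*\eta_{\mu,l}\|_1+\|(\int_0^1 \e^{-2\alpha}|\Phi_\e*\eta_{\mu,l}|^2\,d\e/\e)^{1/2}\|_1$, exploits the vanishing moments of the test kernel $\Phi$ to Taylor-expand $\eta_{\mu,l}$, and feeds in the pointwise off-support decay \eqref{eq: derivative s off origin} coming from the proof of Lemma~\ref{lem:moledule}. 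By contrast, you bypass that characterization entirely: you cut $\eta_{\mu,l}$ into pieces $b_k=\mathcal X_k\eta_{\mu,l}$, use the weighted $L_1(\M;L_2^c)$ bound of Lemma~\ref{lem:moledule} (together with the lower bound $|s-2^{-\mu}l|\gtrsim\rho_k$ on $\supp b_k$ furnished by the $s$-support hypothesis on $\sigma$) to normalize each $b_k$ into an $(\alpha,1)$-atom, and then invoke the easy direction of Theorem~\ref{thm: atomic decop T_L}. Your route is more modular---it recycles only black-box statements already in the paper---while the paper's route is more hands-on and yields the explicit exponent $\iota=L+1+\alpha$ without further parameter juggling. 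Interestingly, the paper itself uses essentially your mechanism later, in Step~2 of the proof of Theorem~\ref{thm:bdd}, when it decomposes $T_{\sigma^2}^c g$ as $\sum_m\nu_m H_m$; so your argument is very much in the spirit of the surrounding material. One small point worth making explicit in a full write-up: the requirement $M>\max(d,2(K_0-\alpha))$ with $M<2L+2$ can force $L$ strictly larger than the minimal value $\max([-\alpha],-1)$ in Definition~\ref{def:smooth T atom}; this is permitted by the remark following Theorem~\ref{thm: atomic decop T_L}, but you should say so.
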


\begin{proof}
Without loss of generality, we still assume $l=0$. We need to use the characterization of $F_1^{\alpha,c}$-norm by the following convolution kernels. Let $\kappa$ be a radial, real and infinitely differentiable function on $\R$ which is supported in the unit cube centered at the origin, and assume that $\widehat{\kappa}(0)>0$. We take $\widehat{\Phi}=|\cdot |^N\widehat{\kappa}$ with $N\in \mathbb{N}_0$ such that $\alpha+\frac{d}{2}-1<N<K-\frac{d}{2}$, and another test function $\Phi_0\in \mathcal{S}$ with $\supp \Phi_0\subset Q_{0,0}$. Let $\Phi_\e$ be the inverse Fourier transform of $\Phi(\e \xi)$. To simplify the notation, we denote $T_\sigma ^c a_{\mu ,l}$ by $\eta_{\mu,l}$. Then Theorem 4.2 in \cite{XX18} gives
 $$
  \| \eta_{\mu,l} \|  _{F_{1}^{\alpha,c}}\approx \|  \Phi _0* \eta_{\mu,l}\| _{1}+\Big\| \big(\int _0^1\e^{-2\alpha}|  \Phi_\e* \eta_{\mu,l} | ^2\frac{d\e}{\e} \big)^\frac{1}{2}\Big\| _{1}.
$$ 
 We notice that  $\Phi$ satisfies the moment cancellation up to order $N$. It follows that
\begin{equation}\label{eq: Taylor m}
\begin{split}
 \Phi_{\varepsilon}* \eta_{\mu,l}(s) & = \int_{\R} \Phi_\e  (t )[ \eta_{\mu,l}(s-t)- \eta_{\mu,l}(s)]dt\\
 &  = \int_{\R}\Phi_\e (t )\sum_{  |\gamma |_1=N+1}\frac{N+1}{\gamma !}(-t)^{\gamma}\int_0^1(1-\theta)^{N} D^{\gamma} \eta_{\mu,l}(s- \theta t)d\theta\, dt.
 \end{split}
\end{equation}
Applying the Cauchy-Schwarz inequality, we have 
\begin{equation}\label{eq: m pointwise 2}
\begin{split}
& \Big| \int_{|t|>\frac{  |s |}{2}}\Phi_\e (t)\sum_{  |\gamma |_1=N+1}\frac{N+1}{\gamma !}t^{\gamma}\int_0^1(1-\theta)^{N} D^{\gamma} \eta_{\mu,l}(s-\theta t)d\theta\, dt\Big|^2\\
&\lesssim   \sum_{  |\gamma |_1=N+1}\int_{\R}\int_0^1(1-\theta)^{2N} |D^{\gamma} \eta_{\mu,l}(s-\theta t)|^2 d\theta (1+|t|)^{-d-1}dt\\
&\;\;\;\; \cdot \int_{\R}|\Phi_\e (t)|^2 |t |^{2N+2}(1+|t|)^{d+1} dt.
\end{split}
\end{equation}
By \eqref{eq: derivative s off origin}, if $s-\theta t\in (4 Q_{0,0})^c$, we have 
$$
|D^{\gamma} \eta_{\mu,l}(s-\theta t)|^2\lesssim  2^{-\mu(2L+2+d)}  |s-\theta t |^{-2d-2L-2-2|\gamma|_1}\int _{\R}  |a_{\mu, l}(r) |^{2}dr.
$$
Therefore, using  the Cauchy-Schwarz inequality again, we have 
 \begin{equation}\label{eq: m pointwise 3}
 \begin{split}
&\Big\| \big(\int _0^1\e^{-2\alpha}|  \Phi_\e* \eta_{\mu,l} | ^2\frac{d\e}{\e} \big)^\frac{1}{2}\Big\| _{1} \\
& \lesssim  2^{-\mu(L+1+\frac{d}{2})} \big( \int_0^1 \e^{2N-d+2-2\alpha}\frac{d\e}{\e}\big)^\frac{1}{2} \int_{(2Q_{0,0})^c}  |s' |^{-d-L-N-2}ds' \int_{\R}(1+|t|)^{-d-1}dt\\
&\;\;\;\; \cdot \int_{\R}|\Phi(t')|^2|t'|^{2N+2}(1+|t'|)^{d+1}dt' \cdot \tau \big(\int _{\R}  |a_{\mu, l}(r) |^{2}dr \big)^\frac{1}{2}\\
& \lesssim  2^{-\mu(L+1+\frac{d}{2})} \tau \big(\int _{\R}  |a_{\mu, l}(t) |^{2}dt \big)^\frac{1}{2}\\
& \lesssim  2^{-\mu(L+1+\frac{d}{2}+\alpha)}  .
\end{split}
\end{equation}

It remains to estimate the $L_1$-norm of  $\Phi_0*\eta_{\mu,l}$, where $\Phi_0$ does not have the moment cancellation. Since $\supp \eta_{\mu ,l} \subset (4Q_{0,0})^c$ and by the support assumption of $\Phi_0$, we have $\supp \Phi _0*\eta_{\mu,l} \subset\{s\in\R: |s|\geq \frac{1}{2}\}$. By Lemma \ref{lem:moledule 2} and the fact that  $| \Phi_0 (s)|\lesssim (1+|s|)^{-d-R}$ for any $R\in \mathbb{N}$, we have 
\begin{equation*}
\begin{split}
& | \Phi _0*\eta_{\mu,l}(s) |^2= \Big|\int _{\R}\Phi _0(s-t)\eta_{\mu,l}(t)dt \Big|^2\\
&\leq   \Big|\int_{|  t| \geq \max\{\frac{|  s| }{2},1\}}\Phi _0(s-t)\eta_{\mu,l}(t)dt \Big|^2+\Big| \int_{1\leq |  t| <\frac{|  s| }{2}}\Phi _0(s-t)\eta_{\mu,l}(t)dt \Big|^2\\
&\leq    \int _{|  t| \geq\max\{\frac{|  s| }{2},1\}} (1+2^{\mu }  |  t| )^{-2d-2R}| \Phi _0(s-t)|  ^2dt \cdot\int_{\R}  (1+2^{\mu } |  t| )^{2d+2R}|  \eta_{\mu,l}(t)|  ^2dt \\
& \;\;\;\; +  \int _{1\leq |  t| <\frac{|  s| }{2}} (1+2^{\mu } |  t| )^{-2d-2R}| \Phi _0(s-t)|  ^2dt \cdot \int_{\R}  (1+2^{\mu } |  t| )^{2d+2R}|  \eta_{\mu,l}(t)|  ^2dt  \\
& \lesssim   \int_{\R} |  \Phi _0(t)|  ^2dt \, (1+2^{\mu } |  s| )^{-2d-2R}  \int_{\R}  (1+2^{\mu } |  t| )^{2d+2R}|  \eta_{\mu,l}(t)|  ^2dt \\
&\;\;\;\; +  \int _{|t|\geq 1} (1+2^{\mu } |  t| )^{-2d-2R}dt \,(1+ |  s| )^{-2d-2R}  \int_{\R}  (1+2^{\mu } |  t| )^{2d+2R}|  \eta_{\mu,l}(t)|  ^2dt \\
&\lesssim \Big( (1+2^{\mu } |  s| )^{-2d-2R}  +2^{-2\mu (d+R)}(1+ |  s|) ^{-2d-2R}\Big)   \int_{\R}  (1+ 2^{\mu } |  t| )^{2d+2R}|  \eta_{\mu,l}(t)|  ^2dt .
\end{split}
\end{equation*}
Then we use \eqref{eq:moledule} to  get,  for any $ R\in \mathbb{N}$, 
\begin{equation*}
\begin{split}
 \|  \Phi _0*\eta_{\mu,l}\|  _{1} & \lesssim  \Big( \int_{|s|\geq \frac{1}{2}} (1+2^{\mu } |  s| )^{-d-R} ds + 2^{-\mu (d+R)} \int_{|s|\geq \frac{1}{2}} (1+ |  s| )^{-d-R} ds \Big)\\
& \;\;\;\;\cdot \tau   (\int_{\R}  (1+ 2^\mu |  t| )^{2d+2R}|  \eta_{\mu,l}(t)|  ^2dt )^\frac{1}{2}\\
& \lesssim 2^{-\mu (d+R+\alpha)}.
\end{split}
\end{equation*}
Combining the estimates above, we see that, there exists $\iota>0$ such that 
$$
\|  T^c_{\sigma}a_{\mu,l} \| _{F_1^{\alpha,c}} =\|  \eta_{\mu,l} \| _{F_1^{\alpha,c}}\lesssim 2^{-\mu(\frac{ d}{2}+\iota) },
$$
which completes the proof.
\end{proof}

Since every $(\alpha, Q_{k, m})$-atom is a linear combination of subatoms, the above lemma helps us to estimate the image of $(\alpha, Q_{k, m})$-atoms under $T_\sigma^c$.

\begin{cor}\label{cor: moledule norm}
Let $\sigma\in S_{1,\delta}^{0}$
and $T^c_{\sigma}$ be the corresponding pseudo-differential operator. Assume that $\alpha\in \mathbb{R}$, $K\in \mathbb{N}$ satisfy $K>\frac{d}{2}$ and $K>\alpha+d$.  If the $s$-support of $\sigma$ is in $( 2^{-k}m+6Q_{0,0})^c$, then for any $(\alpha,Q_{k, m})$-atom $g$,we have
$$
\| T_\sigma ^c g \|_{F_1^{\alpha,c}}\lesssim 1.
$$
\end{cor}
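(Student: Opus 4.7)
The strategy is to reduce the estimate for an $(\alpha,Q_{k,m})$-atom $g$ to the estimate for its constituent $(\alpha,Q_{\mu,l})$-subatoms, using the decomposition $g=\sum_{(\mu,l)\le(k,m)}d_{\mu,l}a_{\mu,l}$ provided by Definition \ref{def:smooth T atom}(3). Since $F_1^{\alpha,c}$ has a subadditive norm, we have
\[
\|T_\sigma^c g\|_{F_1^{\alpha,c}}\le\sum_{(\mu,l)\le(k,m)}|d_{\mu,l}|\,\|T_\sigma^c a_{\mu,l}\|_{F_1^{\alpha,c}},
\]
and so it suffices to bound each $\|T_\sigma^c a_{\mu,l}\|_{F_1^{\alpha,c}}$ by Lemma \ref{lem:moledule norm} and then sum.

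The first task is to check that the hypothesis on $\sigma$ permits applying Lemma \ref{lem:moledule norm} to every $a_{\mu,l}$ appearing in the decomposition. This is a small geometric observation: the condition $(\mu,l)\le(k,m)$ gives $Q_{\mu,l}\subset 2Q_{k,m}$, so $|2^{-\mu}l-2^{-k}m|_\infty\le 2^{-k}\le 1$ (since $k\ge 0$). Consequently the cube $2^{-\mu}l+4Q_{0,0}$ is contained in $2^{-k}m+6Q_{0,0}$, so the $s$-support of $\sigma$ lies in $(2^{-\mu}l+4Q_{0,0})^c$. Lemma \ref{lem:moledule norm} then yields
\[
\|T_\sigma^c a_{\mu,l}\|_{F_1^{\alpha,c}}\lesssim 2^{-\mu(\frac{d}{2}+\iota)}
\]
for every $(\mu,l)\le(k,m)$, with $\iota>0$ the same constant as in that lemma.

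The final step is the summation. I would apply Cauchy--Schwarz together with the $\ell^2$-bound on $(d_{\mu,l})$ from Definition \ref{def:smooth T atom}(3):
\[
\sum_{(\mu,l)\le(k,m)}|d_{\mu,l}|\,2^{-\mu(\frac{d}{2}+\iota)}\le|Q_{k,m}|^{-\frac12}\Big(\sum_{(\mu,l)\le(k,m)}2^{-\mu(d+2\iota)}\Big)^{\frac12}.
\]
For each fixed $\mu\ge k$, the number of $l$ with $Q_{\mu,l}\subset 2Q_{k,m}$ is $\lesssim 2^{d(\mu-k)}$, so
\[
\sum_{(\mu,l)\le(k,m)}2^{-\mu(d+2\iota)}\lesssim 2^{-dk}\sum_{\mu\ge k}2^{-2\mu\iota}\lesssim 2^{-dk}\,2^{-2k\iota}=|Q_{k,m}|\,2^{-2k\iota}.
\]
Plugging in gives $\|T_\sigma^c g\|_{F_1^{\alpha,c}}\lesssim 2^{-k\iota}\le 1$, which is the desired conclusion.

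The only nontrivial point is the geometric inclusion $2^{-\mu}l+4Q_{0,0}\subset 2^{-k}m+6Q_{0,0}$ that justifies invoking Lemma \ref{lem:moledule norm}; this is precisely the reason the hypothesis of the corollary uses the inflated cube $6Q_{0,0}$ rather than $4Q_{0,0}$. Everything after this verification is routine: the extra decay $2^{-\mu\iota}$ beyond the scale-invariant factor $2^{-\mu d/2}$ absorbs the $\ell^2$-normalization of the coefficients and produces a $k$-independent bound.
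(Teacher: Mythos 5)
Your proof is correct and follows essentially the same route as the paper's: decompose $g$ into subatoms, verify the geometric inclusion $2^{-\mu}l+4Q_{0,0}\subset 2^{-k}m+6Q_{0,0}$ so that Lemma \ref{lem:moledule norm} applies to each $a_{\mu,l}$, and then combine Cauchy--Schwarz with the lattice count $\#\{l:(\mu,l)\le(k,m)\}\lesssim 2^{d(\mu-k)}$ and the extra decay $2^{-\mu\iota}$. The only difference is cosmetic: you spell out the geometric inclusion, which the paper asserts without proof.
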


\begin{proof}
Every $(\alpha,Q_{k, m})$-atom $g$ admits the form 
\[
g=\sum_{(\mu,l)\leq (k,m)}d_{\mu,l}a_{\mu,l}  \quad \text{with} \; \sum_{(\mu,l)\leq (k,m)}  |d_{\mu, l} |^{2} \leq  |Q_{k,m} |^{-1}=2^{kd}.
\]
By the support assumption of $\sigma$, $\sigma(s,\xi)=0$ if $s\in 2^{-\mu}l+4Q_{0,0} \subset 2^{-k}m+6Q_{0,0}$. Then, we can apply the previous lemma to every $a_{\mu,l}$ with $(\mu,l)\leq (k,m)$. The result is
\[
\|  T^c_{\sigma}a_{\mu,l} \| _{F_1^{\alpha,c}} \lesssim 2^{-\mu(\frac{ d}{2}+\iota) } \quad \text{with } \iota>0.
\]
Applying the Cauchy-Schwarz inequality, we get
\begin{equation}\label{e: estimate of H}
\begin{split}
  \|  T_\sigma ^c g \|  _{F_{1}^{\alpha,c}} & \leq  \sum_{(\mu,l)\leq (k,m)} |d_{\mu, l} | \cdot \|  T_\sigma^ca_{\mu,l} \|  _{F_{1}^{\alpha,c}}\\
  & \leq  \sum_{(\mu,l)\leq (k,m)} |d_{\mu, l} |  \cdot 2^{-\mu (\frac d 2 +\iota)}\\
 & \lesssim  (\sum_{(\mu,l)\leq (k,m)}  |d_{\mu, l} |^{2} )^{\frac{1}{2}} (\sum_{(\mu,l)\leq (k,m)}   2^{-\mu(d+2\iota)} )^{\frac{1}{2}}\\
 & \leq    (\sum_{(\mu,l)\leq (k,m)}  |d_{\mu, l} |^{2} )^{\frac{1}{2}}(\sum_{\mu\geq k} \frac{|2Q_{k,m}|}{|Q_{\mu, l}|} \cdot  2^{-\mu(d+2\iota)})^\frac{1}{2}  \\
 &\lesssim |Q_{k,m}|^{-\frac 1 2 }  \cdot 2^{-\frac{kd}{2}} = 1.
 \end{split}
\end{equation}
Thus, the assertion is proved.
\end{proof}

Likewise, we can estimate the image of $(\alpha,1) $-atoms under the pseudo-differential operator $T_\sigma^c$.

\begin{lem}\label{lem:moledule norm b}
Let $\sigma\in S_{1,\delta}^{0}$
and $T^c_{\sigma}$ be the corresponding pseudo-differential operator. Assume that $\alpha\in \mathbb{R}$, $K\in \mathbb{N}$ satisfy $K>\frac{d}{2}$ and $K>\alpha+d$. If the $s$-support of $\sigma$ is in 
$(k+4Q_{0,0})^c$ for some $k\in \mathbb{Z}^d$, then for any  
$(\alpha,1)$-atom $b$ such that $\supp b\subset 2Q_{0,k}$, we have
$$
\| T_\sigma ^c b \|_{F_1^{\alpha,c}}\lesssim 1.
$$
\end{lem}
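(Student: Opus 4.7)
The plan is to mirror the proof of Lemma \ref{lem:moledule norm} step by step, with the subatomic moment-cancellation estimate \eqref{eq: derivative s off origin} replaced by a pointwise derivative bound derived from the off-diagonal kernel decay \eqref{eq:kernel'}. Set $\eta := T_\sigma^c b$. The support hypothesis on $\sigma$ forces $\eta(s) = 0$ for $s \in k+4Q_{0,0}$. Using the same $F_1^{\alpha,c}$-characterisation as in Lemma \ref{lem:moledule norm}, with the same $\kappa$, $\Phi$ (for $\alpha+\tfrac{d}{2}-1<N<K-\tfrac{d}{2}$) and $\Phi_0$,
$$\|\eta\|_{F_1^{\alpha,c}} \approx \|\Phi_0*\eta\|_1 + \Bigl\|\Bigl(\int_0^1 \e^{-2\alpha}|\Phi_\e*\eta|^2\,\tfrac{d\e}{\e}\Bigr)^{1/2}\Bigr\|_1.$$
The low-frequency piece is handled exactly as at the end of the proof of Lemma \ref{lem:moledule norm}: since $\supp\Phi_0 \subset Q_{0,0}$ and $\eta$ vanishes on $k+4Q_{0,0}$, $\supp(\Phi_0*\eta) \subset \{|s-k|\geq \tfrac{1}{2}\}$; splitting $\int\Phi_0(s-t)\eta(t)\,dt$ via \eqref{op-CS} with scalar weight $(1+|t-k|)^{d+R}$, using $|\Phi_0(s)|\lesssim (1+|s|)^{-d-R}$ and Lemma \ref{lem:moledule 2} with $M=R$, yields $\|\Phi_0*\eta\|_1\lesssim 1$.

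For the square-function piece, expand $\eta$ by Taylor's formula as in \eqref{eq: Taylor m}, split the $t$-integral at $|t|>|s-k|/2$, and apply the Cauchy--Schwarz inequality \eqref{op-CS} as in \eqref{eq: m pointwise 2}. The key new input, replacing \eqref{eq: derivative s off origin}, is the pointwise bound
$$|D^\gamma \eta(v)|^2 \lesssim (1+|v-k|)^{-2N''}\int_\R |b(r)|^2\,dr,\qquad v\notin k+4Q_{0,0},$$
valid for any $N''>0$. To obtain it, write $\eta(v) = \int K(v,v-r) b(r)\,dr$, apply the scalar-weighted Cauchy--Schwarz trick from \eqref{eq: s off origin} with weight $\|D^\gamma K(v,v-r)\|_\M^{-2}$, and observe that $\supp b \subset 2Q_{0,k}$ together with $v\notin k+4Q_{0,0}$ forces $|v-r|>1$ for every $r\in\supp b$, so \eqref{eq:kernel'} gives $\|D^\gamma K(v,v-r)\|_\M\lesssim (1+|v-k|)^{-N''}$. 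Substituting into the square-function calculation and choosing $N''>d$ so that $\int(1+|s-k|)^{-2N''}\,ds$ converges, the computation leading to \eqref{eq: m pointwise 3} --- with the $\mu$-scaling factor $2^{-\mu(L+1+d/2)}$ dropped and the atom-size factor bounded by $\tau(\int|b|^2)^{1/2}\lesssim 1$ (from the $(\alpha,1)$-atom definition applied at $\gamma=0$) --- produces an $O(1)$ bound for the square-function piece.

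The only real obstacle is the absence of moment cancellation of $b$: without it, the pointwise estimate \eqref{eq: derivative s off origin} is not available, and a substitute must be found. The crucial observation is that $\supp b$ and the effective support of $\eta$ are separated by more than unit distance, which puts us in the regime $|v-r|>1$ where the strong off-diagonal decay \eqref{eq:kernel'} of the kernel holds. This decay plays the role of moment cancellation in supplying the polynomial decay of $|D^\gamma\eta(v)|$ in $|v-k|$ needed for the Taylor expansion estimate; the rest of the computation is identical to that of Lemma \ref{lem:moledule norm}, modulo the harmless simplification of having to produce only an $O(1)$ bound rather than one decaying in $\mu$.
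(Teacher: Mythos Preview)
Your proposal is correct and is precisely the argument the paper has in mind: the paper's own proof consists of the single remark that one repeats the proof of Lemma~\ref{lem:moledule norm} verbatim, replacing the input from (the proof of) Lemma~\ref{lem:moledule} by that of Lemma~\ref{lem:moledule 2}, and you have carried this out explicitly---in particular, your pointwise bound $|D^\gamma\eta(v)|^2\lesssim (1+|v-k|)^{-2N''}\int|b|^2$ for $v\notin k+4Q_{0,0}$ is exactly the off-support estimate established in the proof of Lemma~\ref{lem:moledule 2} via \eqref{eq:kernel'}.
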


\begin{proof}
The proof of this lemma is very similar to that of  Lemma \ref{lem:moledule norm}; it suffices to apply (the proof of) Lemma \ref{lem:moledule 2} instead of Lemma \ref{lem:moledule}.
\end{proof}

\begin{cor}\label{lem:moledule m norm}
Let $\sigma\in S_{1,\delta}^{0}$
and $T^c_{\sigma}$ be the corresponding pseudo-differential operator. Given $\alpha\in \mathbb{R}$, $K\in \mathbb{N}$ such that $K>\frac{d}{2}$ and $K>\alpha+d$, then for any  
$(\alpha,1)$-atom $b$,  we have
$$
\| T_\sigma ^c b \|_{F_1^{\alpha,c}}\lesssim 1.
$$
\end{cor}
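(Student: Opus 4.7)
The strategy is to split $\sigma$ into a piece localized near the support of the atom $b$, to be handled directly via Lemma~\ref{lem:moledule 2} and the atomic decomposition, and a complementary far piece, to which Lemma~\ref{lem:moledule norm b} applies.

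Fix $k \in \mathbb{Z}^d$ with $\supp b \subset 2Q_{0,k}$ and pick $\chi \in C^\infty_c(\R)$ with $\chi \equiv 1$ on $4Q_{0,0}$ and $\supp \chi \subset 6Q_{0,0}$. Write
\[
\sigma = \sigma_{\mathrm{loc}} + \sigma_{\mathrm{far}}, \qquad \sigma_{\mathrm{loc}}(s,\xi) := \chi(s-k)\,\sigma(s,\xi).
\]
Because $\chi(\cdot - k)$ is a translate of a fixed smooth cutoff, both $\sigma_{\mathrm{loc}}$ and $\sigma_{\mathrm{far}}$ remain in $S_{1,\delta}^{0}$ with symbolic constants independent of $k$. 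The $s$-support of $\sigma_{\mathrm{far}}$ is contained in $(k+4Q_{0,0})^c$, so Lemma~\ref{lem:moledule norm b} gives $\|T_{\sigma_{\mathrm{far}}}^c b\|_{F_1^{\alpha,c}} \lesssim 1$.

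For the local piece, the support of $\sigma_{\mathrm{loc}}$ forces $T_{\sigma_{\mathrm{loc}}}^c b$ to vanish outside $k+6Q_{0,0}$, and Lemma~\ref{lem:moledule 2} applied to $\sigma_{\mathrm{loc}}$ yields
\[
\tau\Big(\int_{\R}(1+|s-k|)^{d+M}\,|D^{\gamma}T_{\sigma_{\mathrm{loc}}}^c b(s)|^{2}\,ds\Big)^{1/2}\lesssim 1, \qquad |\gamma|_1 < K - \tfrac{d}{2}.
\]
Using the resolution of unity \eqref{eq: unit resolution}, decompose $T_{\sigma_{\mathrm{loc}}}^c b = \sum_m \mathcal{X}_m\,T_{\sigma_{\mathrm{loc}}}^c b$. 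Only the $O_d(1)$ indices $m$ with $(m+2Q_{0,0})\cap(k+6Q_{0,0})\neq\emptyset$ contribute. Each non-zero summand $\eta_m := \mathcal{X}_m T_{\sigma_{\mathrm{loc}}}^c b$ is supported in $2Q_{0,m}$, and Leibniz together with the above estimate gives $\tau(\int|D^{\gamma}\eta_m|^{2})^{1/2}\lesssim 1$ for $|\gamma|_1 < K - d/2$.

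The hypothesis $K > \alpha + d$ ensures $K - d/2$ exceeds $(\lfloor\alpha\rfloor + 1)_+$, so each $\eta_m$ is a uniformly bounded multiple of an $(\alpha,1)$-atom in the sense of Definition~\ref{def:smooth T atom}. Theorem~\ref{thm: atomic decop T_L} then yields $\|\eta_m\|_{F_1^{\alpha,c}}\lesssim 1$, and summing over the $O_d(1)$ nonzero indices gives $\|T_{\sigma_{\mathrm{loc}}}^c b\|_{F_1^{\alpha,c}}\lesssim 1$. Combining the two pieces completes the proof. The main technical point is the matching of parameters: one must check that the derivative order furnished by Lemma~\ref{lem:moledule 2} is enough to turn the pieces $\eta_m$ into legitimate $(\alpha,1)$-atoms, and this is exactly the reason the stronger hypothesis $K > \alpha + d$ is imposed on top of $K > d/2$.
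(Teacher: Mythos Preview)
Your proof is correct and follows essentially the same route as the paper: a near/far split of the symbol in the $s$-variable, with the far piece handled by Lemma~\ref{lem:moledule norm b} and the near piece chopped via the resolution of unity into finitely many pieces that Lemma~\ref{lem:moledule 2} certifies as bounded multiples of $(\alpha,1)$-atoms. The only cosmetic difference is that the paper performs the splitting by summing the $\mathcal{X}_j$ from \eqref{eq: unit resolution} directly (separating $j\in k+6Q_{0,0}$ from $j\notin k+6Q_{0,0}$), whereas you first introduce a single cutoff $\chi(\cdot-k)$ and only afterwards use the $\mathcal{X}_m$ on the local piece; the two organizations are equivalent.
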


\begin{proof}
Let $(\mathcal{X}_j)_{j\in \mathbb{Z}^d}$ be the smooth resolution of the unit in \eqref{eq: unit resolution}. We decompose $ T_\sigma ^c b$ as
$$
 T_\sigma ^c b=\sum_{j\in\mathbb{Z}^d}\mathcal{X}_j T_\sigma ^c b = \sum_{j\in\mathbb{Z}^d} T_{\sigma_j}^c b,
$$
where all $\sigma_j=\mathcal{X}_j(s)\sigma(s,\xi)$ belong to $ S_{1,\delta}^0$ uniformly in $j$.
Suppose that $b$ is supported in $2Q_{0,k}$ with $k\in \mathbb{Z}^d$. We split the above summation into two parts:
\begin{equation}\label{eq: split sum}
 T_\sigma ^c b=\sum_{j\in k+6Q_{0,0}}\mathcal{X}_j T_\sigma ^c b+\sum_{j\notin k+ 6Q_{0,0}}\mathcal{X}_j T_\sigma ^c b.
\end{equation}
Applying Lemma \ref{lem:moledule 2} with  $M=-d$ to the symbol $\mathcal{X}_j(s)\sigma(s,\xi)$, we get, for any $j\in \mathbb{Z}^d$,
$$
\tau \big( \int_{j+2Q_{0,0}} |D^\gamma(\mathcal{X}_j \,  T_\sigma ^c b(s)) |^2 ds  \big)^\frac{1}{2}\lesssim 1, \quad \forall\, |\gamma |_1\leq [\alpha]+1.
$$
Thus, $\mathcal{X}_j \,  T_\sigma ^c b$ is a bounded multiple of  an $(\alpha,1)$-atom. So the first term on the right hand side of \eqref{eq: split sum} is a finite sum of $(\alpha,1)$-atoms, and thus has bounded $F_1^{\alpha,c}$-norm. Now we deal with the second term. Note that the $s$-support of the symbol $\sum_{j\notin k+6Q_{0,0}}\mathcal{X}_j (s)\sigma(s,\xi)$ is in $(k+4Q_{0,0})^c$. Then, it suffices to apply Lemma \ref{lem:moledule norm b} to this symbol, so that
\[
\big\| \sum_{j\notin k+6Q_{0,0}}\mathcal{X}_j T_\sigma ^c b \|_{F_1^{\alpha,c}}\lesssim 1.
\]
The proof is complete.
\end{proof}

 \section{ Regular symbols}\label{section-regular}

In this section, we study the continuity of the pseudo-differential operators with regular symbols in $S_{1,\delta}^0$ ($0\leq \delta <1$) on Triebel-Lizorkin spaces. We start by presenting an $L_2$-theorem. It is a noncommutative analogue of the corresponding  classical theorem, which can be found in many works, for instance, \cite{{Stein1993}, {Tay}, {R-T}}. Then, we will use the atomic decompositions introduced in section \ref{prelimi} to deduce the $F_{p}^{\alpha,c}$-boundedness. Different from the pseudo-differential operators with the forbidden symbols in $S_{1,1}^0$, which will be treated in the next section, our proof stays at the level of atoms; in other words, we do not need the subtler decomposition that every $(\alpha,Q)$-atom can be written as a linear combination of subatoms.

 \begin{thm}\label{thm:bdd}
Let $0\leq \delta<1$, $\sigma\in S_{1,\delta}^{0}$ and $\alpha\in \mathbb{R}$. Then $T^c_{\sigma}$ is bounded on $F_{p}^{\alpha,c}(\mathbb{R}^d,\mathcal{M})$ for any $1\leq p \leq \infty$.
 \end{thm}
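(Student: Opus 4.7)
The plan is to bootstrap from the $p=1$ case via duality and interpolation. More precisely, once the boundedness of $T_\sigma^c$ on $F_1^{\alpha,c}(\R,\M)$ is established for every $\alpha\in \mathbb R$, the $p=\infty$ case follows by duality: by Proposition \ref{prop: adjoint} (which needs $\delta<1$), the adjoint $(T_\sigma^c)^*$ is again a pseudo-differential operator with symbol in $S_{1,\delta}^0$, and applying the $p=1$ result at level $-\alpha$ and using $(F_1^{-\alpha,c})^*=F_\infty^{\alpha,c}$ yields $T_\sigma^c\colon F_\infty^{\alpha,c}\to F_\infty^{\alpha,c}$. The range $1<p<\infty$ then follows from complex interpolation via
\[
(F_\infty^{\alpha_0,c}(\R,\M),\,F_1^{\alpha_1,c}(\R,\M))_{1/p}=F_p^{\alpha,c}(\R,\M), \qquad \alpha=(1-\tfrac{1}{p})\alpha_0+\tfrac{\alpha_1}{p},
\]
choosing $\alpha_0,\alpha_1$ appropriately for each target $\alpha$.

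The heart of the argument is thus the $p=1$ case. Here I would invoke the smooth atomic decomposition (Theorem \ref{thm: atomic decop T_L}), which reduces matters to the uniform estimate $\|T_\sigma^c a\|_{F_1^{\alpha,c}}\lesssim 1$ on every $(\alpha,1)$-atom and every $(\alpha,Q_{k,m})$-atom, with $K$ and $L$ chosen as large as the proof demands. For $(\alpha,1)$-atoms this is exactly Corollary \ref{lem:moledule m norm}. For an $(\alpha,Q_{k,m})$-atom $g$, I would use the smooth resolution of unity $\{\mathcal X_j\}_{j\in\Z}$ from \eqref{eq: unit resolution} to split the symbol $\sigma=\sum_j \mathcal X_j(s)\sigma(s,\xi)$, and correspondingly decompose
\[
T_\sigma^c g=\sum_{j\notin 2^{-k}m+6Q_{0,0}}T_{\mathcal X_j\sigma}^c g+\sum_{j\in 2^{-k}m+6Q_{0,0}}T_{\mathcal X_j\sigma}^c g.
\]
The first sum (the \emph{far part}) has $s$-support in $(2^{-k}m+6Q_{0,0})^c$, so splitting $\Z$ into the $2^d$ disjoint translates (as in the proof of Proposition \ref{prop: composition}) so that the pieces in each translate have disjoint supports, Corollary \ref{cor: moledule norm} applied inside each translate yields a uniform $O(1)$ bound.

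The main obstacle is the \emph{near part}, namely the finitely many terms $T_{\mathcal X_j\sigma}^c g$ with $j$ in a unit-size neighbourhood of $2^{-k}m$. For each such $j$ the function is $s$-supported in the unit cube $j+2Q_{0,0}$, and we must show its $F_1^{\alpha,c}$-norm is $O(1)$ uniformly in $k,m$, despite the fact that $g$ itself can have $L_2$-norm as large as $|Q_{k,m}|^{-1/2}$. I would expand $g=\sum_{(\mu,l)\leq(k,m)}d_{\mu,l}a_{\mu,l}$ into subatoms and apply Lemma \ref{lem:moledule} to each $T_\sigma^c a_{\mu,l}$; because $j$ stays in $2^{-k}m+6Q_{0,0}$ while $2^{-\mu}l\in 2Q_{k,m}$, the polynomial weight $(1+2^\mu|s-2^{-\mu}l|)^{d+M}$ in Lemma \ref{lem:moledule} is of order $2^{\mu(d+M)}$ on $j+2Q_{0,0}$, producing decay of the form $\|\mathcal X_j T_\sigma^c a_{\mu,l}\|\lesssim 2^{-\mu\iota}$ for a suitable $\iota>0$ once $M$ (hence $L$) is taken large enough that $d/2+M/2+\alpha>0$. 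Summing against $\sum|d_{\mu,l}|^2\leq 2^{kd}$ by Cauchy--Schwarz, exactly as in \eqref{e: estimate of H}, will then express the near part (after dividing by an absolute constant) as a bounded multiple of an $(\alpha,1)$-atom based on $j+2Q_{0,0}$, giving the desired $O(1)$ bound. The delicate point throughout is to arrange $K$, $L$ and $M$ coherently so that every term from Lemmas \ref{lem:moledule} and \ref{lem:moledule 2} produces genuine decay rather than growth in $\mu$; this is precisely where the freedom in the atomic decomposition (arbitrarily large $K,L$) is crucial.
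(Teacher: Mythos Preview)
Your duality and interpolation steps for $p>1$ match the paper, as does your use of Corollary \ref{lem:moledule m norm} for $(\alpha,1)$-atoms and Corollary \ref{cor: moledule norm} for the far part of $(\alpha,Q)$-atoms. The gap is in your treatment of the near part.

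Your weight claim is false. For $j\in 2^{-k}m+6Q_{0,0}$ and $(\mu,l)\le(k,m)$, the subatom centre $2^{-\mu}l$ lies in $2Q_{k,m}$, i.e., within distance $2^{1-k}$ of $2^{-k}m$; hence $2^{-\mu}l$ sits \emph{inside} (or right next to) $j+2Q_{0,0}$, and for $s$ near $2^{-\mu}l$ the weight $(1+2^\mu|s-2^{-\mu}l|)^{d+M}$ is $\approx 1$, not $\approx 2^{\mu(d+M)}$. So Lemma \ref{lem:moledule} only yields $\tau(\int_{j+2Q_{0,0}}|T_\sigma^c a_{\mu,l}|^2)^{1/2}\lesssim 2^{-\mu\alpha}$, which \emph{grows} in $\mu$ when $\alpha\le0$; the Cauchy--Schwarz summation \eqref{e: estimate of H} then diverges. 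The decay $2^{-\mu(d/2+\iota)}$ you are trying to invoke comes from Lemma \ref{lem:moledule norm}, and that lemma requires the symbol's $s$-support to lie \emph{outside} $2^{-\mu}l+4Q_{0,0}$ --- precisely the hypothesis that fails for the near part.

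The paper handles this by a different route: it first reduces to $\alpha=0$ via the lifting identity $J^\alpha T_\sigma^c J^{-\alpha}=T_{\sigma^\alpha}^c$ with $\sigma^\alpha\in S_{1,\delta}^0$ (Proposition \ref{prop: composition}, using $\delta<1$). At $\alpha=0$ the atom $g$ carries a genuine $L_2$ size bound $\tau(\int|g|^2)^{1/2}\le|Q|^{-1/2}$, and the near-part symbol (supported in a bounded dilate of $Q$) is controlled by the $L_1(\M;L_2^c(\R))$-boundedness of $T_\sigma^c$ (Lemma \ref{lem:L_1 bdd}, itself resting on the $L_2(\N)$ theory of Corollary \ref{cor: L_2 bdd}). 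Subtracting the pointwise multiplier $m_{\zeta_j}^c$ with $\zeta_j(s)=\mathcal X_j^Q(s)\widetilde\sigma(s,0)^*$ restores vanishing mean, so $T_{\sigma_j^1}^c g-m_{\zeta_j}^c g$ is again an $\h_1^c$-atom. No subatom expansion is used for the near part. If you want to work directly at general $\alpha$, the analogous ingredient would be boundedness of $T_\sigma^c$ on $L_1(\M;H_2^\alpha(\R)^c)$, which follows from Lemma \ref{lem:L_1 bdd} via the same conjugation; but you would still need to produce an $(\alpha,1)$-atom structure, and the paper's route through $\alpha=0$ is cleaner.
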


In order to fully understand the image of an $(\alpha,Q)$-atom under the action of a pseudo-differential operator, we  need to study its $L_1(\M;L_2^c(\R))$-boundedness. 
 We will work on the exotic class $S_{\delta,\delta}^0$ with $0\leq\delta <1$, since we have the inclusion $S_{1,\delta}^0\subset S_{\delta,\delta}^0$. The Cotlar-Stein almost orthogonality lemma plays a crucial role in our proof. 
Namely, given a family of operators $(T_j)_j\subset B(H)$ with $H$ a Hilbert space, and a  positive sequence $\big(c(j) \big)_j$ such that $\sum_j c(j)=C<\infty$, if the $T_j$'s satisfy:
$$
\|T_k^*T_j\|_{B(H)} \leq | c(k-j) |^2
$$
and 
$$
\|T_k T_j^*\|_{B(H)}\leq  | c(k-j) |^2,
$$ 
 then we have 
$$
\big\| \sum_j T_j \big\| _{B(H)}\leq C.
$$

We begin with a simpler case where $\delta=\rho=0$. The following lemma is modelled after \cite[Proposition~VII.2.4]{Stein1993}; we include a proof for the sake of completeness.

\begin{lem}\label{lem: L2 bdd}
Assume $\sigma\in S_{0,0}^0$. Then $T_\sigma^c$ is bounded on $L_2(\N)$.
\end{lem}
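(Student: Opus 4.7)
The plan is to adapt the classical Cotlar--Stein argument (as in \cite[VII.2.4]{Stein1993}) to the operator-valued setting, exploiting that $L_2(\N) = L_2(\R; L_2(\M))$ is still a Hilbert space. Fix a smooth partition of unity $\{\psi(\cdot - k)\}_{k \in \Z}$ on $\R$ with $\psi \in C_c^\infty$ supported in the unit cube centred at the origin, and decompose
$$\sigma = \sum_{k \in \Z} \sigma_k, \qquad \sigma_k(s, \xi) = \sigma(s, \xi)\,\psi(\xi - k).$$
Write $T_k = T_{\sigma_k}^c$. Since $L_2(\N)$ is a Hilbert space, the Cotlar--Stein lemma as stated at the end of the excerpt reduces the proof to establishing, for some (any) $N > d$, the almost-orthogonality bound
$$\|T_k^*\, T_j\|_{B(L_2(\N))} + \|T_k\, T_j^*\|_{B(L_2(\N))} \lesssim (1 + |k - j|)^{-2N}.$$

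The key observation is that, after the translation $\xi \mapsto \xi + k$, the kernel of $T_k$ factors as $K_k(s,u) = e^{2\pi {\rm i}\, u \cdot k}\, L_k(s,u)$, where $L_k(s,u) = \int \sigma(s,\xi+k)\psi(\xi)\,e^{2\pi {\rm i}\, u \cdot \xi}\,d\xi$. Repeated integration by parts in $\xi$, combined with the uniform $S_{0,0}^0$ bounds on $\sigma$, shows that $L_k$ and each of its $(s,u)$-derivatives is $\M$-valued Schwartz in $u$, uniformly in $s$ and $k$. Writing out the integral kernel of $T_k\, T_j^*$ on $L_2(\R; L_2(\M))$ then yields
$$\mathcal{K}_{k,j}(s,u) = e^{2\pi {\rm i}(s \cdot k - u \cdot j)} \int L_k(s, s-t)\, L_j(u, u-t)^{*}\, e^{-2\pi {\rm i}\, t \cdot (k - j)}\, dt.$$
Integrating by parts $2N$ times in $t$ extracts the factor $|k-j|^{-2N}$ (for $k \neq j$), while the Schwartz bounds on the $L_k$'s give an integrand that is integrable in $t$ with majorant $C_N(1+|s-u|)^{-d-1}$ in $\M$-norm. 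A noncommutative Schur test, valid because the $\M$-valued kernel $\mathcal{K}_{k,j}$ acts fibrewise on $L_2(\R; L_2(\M))$ by left multiplication (which is bounded by the operator norm), then yields the desired estimate on $\|T_k\, T_j^*\|$. The estimate for $\|T_k^*\, T_j\|$ is entirely symmetric, using the dual formula for the adjoint as in \eqref{eq: adjoint operator}.

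The main obstacle is the noncommutative Schur test in the last step: one must verify that left multiplication by an $\M$-valued kernel $\mathcal{K}$ on $L_2(\R; L_2(\M))$ is controlled by $\bigl(\sup_s \int \|\mathcal{K}(s,u)\|_\M\, du\bigr)^{1/2} \bigl(\sup_u \int \|\mathcal{K}(s,u)\|_\M\, ds\bigr)^{1/2}$. This follows from a direct Cauchy--Schwarz calculation once one uses $\|m \cdot x\|_{L_2(\M)} \leq \|m\|_\M\, \|x\|_{L_2(\M)}$, but it must be invoked explicitly since it is not in the ambient classical toolkit. A final minor point is that the identity $T_\sigma^c = \sum_k T_k$ requires interpretation before one knows the series converges; this is handled by first applying the argument to the compactly supported truncations $\sigma_j$ from \eqref{eq: compact symbol}, for which all the above bounds are uniform in $j$, and then passing to the limit via \eqref{eq: converge symbol}.
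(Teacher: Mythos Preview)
Your proposal is correct and follows a Cotlar--Stein strategy, but the decomposition differs from the paper's. You localize only in the frequency variable $\xi$ (single index $k \in \Z$), write out the convolution-type kernels $K_k(s,u)$ explicitly, and extract the decay $|k-j|^{-2N}$ by integrating by parts in the spatial variable $t$ (respectively $s$) inside the kernel of $T_kT_j^*$ (respectively $T_k^*T_j$). The paper instead passes by Plancherel to the conjugate operator $S_\sigma^c$, localizes with a \emph{double} index ${\rm k}=(k,k')\in\Z\times\Z$ in both $s$ and $\xi$, and gets half of the almost-orthogonality for free: when the first indices differ the $s$-supports are disjoint, so $\sigma_{\rm k}^*\sigma_{\rm j}=0$; the remaining decay in $|k'-j'|$ comes from a single integration by parts in $s$. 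Your route is more kernel-computational but avoids the double indexing; the paper's route buys the vanishing $\sigma_{\rm k}^*\sigma_{\rm j}=0$ cheaply but carries a $2d$-dimensional lattice sum. Both arguments terminate with the same noncommutative Schur test (the paper writes it out explicitly via H\"older and a duality pairing with $g\in L_2(\N)$, which is exactly the calculation you sketch), so neither approach is materially shorter or more general here.
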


\begin{proof}
By the Plancherel formula, it is enough to prove the $L_2(\N)$-boundedness of the following operator:
\[
S^c_\sigma (f)(s)=\int_{\R} \sigma (s,\xi) f(\xi) e^{2\pi{\rm i}s\cdot \xi}d\xi. 
\]
Let us make use of the resolution of the unit $(\mathcal{X}_k)_{k\in\mathbb{Z}^d}$ introduced in \eqref{eq: unit resolution} to decompose $S^c_\sigma$ into almost orthogonal  pieces. Denote ${\rm k}=(k,k')\in \mathbb{Z}^d \times \mathbb{Z}^d$, and set
\[
\sigma_{\rm k}(s,\xi)=\mathcal{X}_k(s)\sigma(s,\xi)\mathcal{X}_{k'}(\xi),
\]
Then, the series $\sum_{{\rm k}\in \mathbb{Z}^d \times \mathbb{Z}^d} S^c_{\sigma_{\rm k}}$ converges in the strong operator topology and 
\[
S^c_\sigma = \sum_{{\rm k}\in \mathbb{Z}^d \times \mathbb{Z}^d} S^c_{\sigma_{\rm k}}.
\]
We claim that $(S^c_{\sigma_{\rm k}})_{\rm k}$ satisfies the almost-orthogonality estimates, i.e., for any $N\in \mathbb{N}$, 
$$
\| (S^c_{\sigma_{\rm k}})^*S^c_{\sigma_{\rm j}}\|_{B(L_2(\N))} \leq C_N(1+ | {\rm k}-{\rm j} | )^{-2N},
$$
and 
$$
\|S^c_{\sigma_{\rm k}} (S^c_{\sigma_{\rm j}})^*\|_{B(L_2(\N))}\leq C_N(1+ | {\rm k}-{\rm j} | )^{-2N},
$$ 
where the constant $C_N$ is independent of ${\rm k}=(k,k')$ and ${\rm j}=(j,j')$.
Armed with this claim, we can then apply the Cotlar-Stein almost orthogonality lemma stated previously to the operators $(S^c_{\sigma_{\rm k}})_{\rm k}$ with $c({\rm j})=(1+ |{\rm j}|)^{-N}$, $N>2d$. Then, we will have
\[
\|S^c_\sigma\|_{B(L_2(\N))}= \| \sum_{{\rm k}\in \mathbb{Z}^d \times \mathbb{Z}^d} S^c_{\sigma_{\rm k}}\|_{B(L_2(\N))}\leq C.
\]

Now we prove the claim. Note that for any $f\in L_2(\N)$,
\begin{equation*}
(S^c_{\sigma_{\rm k}})^*S^c_{\sigma_{\rm j}}(f)(\xi)=\int_{\R} \sigma_{{\rm k},{\rm j}}(\xi,\eta) f(\eta)d\eta,
\end{equation*}
where 
\begin{equation}\label{eq: orthogonal formula}
\sigma_{{\rm k},{\rm j}}(\xi,\eta) =\int_{\R} \sigma^*_{{\rm k}}(s,\xi)\sigma_{{\rm j}}(s,\eta)e^{2\pi {\rm i}s\cdot(\eta-\xi)} ds.
\end{equation}
By the definition of $\sigma_{\rm k}$, we see that if $k-j \notin 2Q_{0,0}$ (recalling that $Q_{0,0}$ is the unit cube centered at the origin), $\sigma_{\rm k}$ and $\sigma_{\rm j}$ have disjoint $s$-support, so 
\[
\sigma_{\rm k}^*\sigma_{\rm j}=0.
\]
When $k-j \in 2Q_{0,0}$, using the identity 
\[
(1-\Delta _s)^N e^{2\pi {\rm i}s\cdot(\eta-\xi)}=(1+4\pi^2|\eta-\xi |^2)^Ne^{2\pi {\rm i}s\cdot(\eta-\xi)},
\]
we integrate \eqref{eq: orthogonal formula} by parts, which gives
\[
\| \sigma_{{\rm k},{\rm j}}(\xi,\eta)\|_{\M}\leq C_N \mathcal{X}_{k'}(\xi)\mathcal{X}_{j'}(\eta)(1+|\xi-\eta|)^{-2N}.
\]
Whence,
\begin{equation}\label{eq: symbol integral}
\max\Big\{\int_{\R}\| \sigma_{{\rm k},{\rm j}}(\xi,\eta)\|_{\M} d\xi, \int_{\R}\| \sigma_{{\rm k},{\rm j}}(\xi,\eta)\|_{\M} d\eta\Big\} \leq C_N' (1+|{\rm k}-{\rm j}|)^{-2N}. 
\end{equation}
For any $f\in L_2(\N)$, there exists $g\in L_2(\N)$ with norm one such that 
$$
\big\| (S^c_{\sigma_{\rm k}})^*S^c_{\sigma_{\rm j}} f  \big\|_{L_2(\N)}= \Big|\tau \int_{\R} \int_{\R} \sigma_{{\rm k},{\rm j}}(\xi,\eta) f(\eta)\,d\eta\, g^*(\xi)\,d\xi\Big| .
$$
Applying the H\"older inequality and \eqref{eq: symbol integral}, we get 
\begin{equation*}
\begin{split}
& \Big|\tau \int_{\R} \int_{\R} \sigma_{{\rm k},{\rm j}}(\xi,\eta) f(\eta)\,d\eta \,g^*(\xi)\,d\xi\Big| \\
& \leq  \big( \tau \int_{\R} \int_{\R} \| \sigma_{{\rm k},{\rm j}}(\xi,\eta)\|_{\M}| f(\eta)|^2d\eta d\xi \big)^\frac{1}{2} \big( \tau \int_{\R} \int_{\R} \| \sigma_{{\rm k},{\rm j}}(\xi,\eta)\|_\M |g(\xi)|^2d\xi d\eta \big) ^\frac{1}{2} \\
& \leq C_N' (1+|{\rm k}-{\rm j}|)^{-2N} \| f\| _{L_2(\N)}.
\end{split}
\end{equation*}
Thus, $\| (S^c_{\sigma_{\rm k}})^*S^c_{\sigma_{\rm j}}\|_{B(L_2(\N))} \leq C_{N}'(1+ | {\rm k}-{\rm j} | )^{-2N}$.
On the other hand, a similar argument also shows that 
\[
\| S^c_{\sigma_{\rm k}}(S^c_{\sigma_{\rm j}})^*\|_{B(L_2(\N))} \leq C_{N}'(1+ | {\rm k}-{\rm j} | )^{-2N},
\]
which proves the claim.
\end{proof}

 A weak form of Cotlar-Stein's almost orthogonality lemma also plays a crucial role. As before, we suppose that $\sum_j c(j)=C<\infty$. This time we assume that the $T_j$'s satisfy:
\begin{equation}\label{CS-uniform}
\sup_{j}\| T_j\|_{B(H)}\leq C
\end{equation}
and the following conditions hold for $j\neq k$:
\begin{equation}\label{CS-orth}
\|T_jT_k^*\|_{B(H)}=0 \quad \text{and}\quad \|T_j^*T_k\|_{B(H)}\leq c(j)c(k).
\end{equation}
Then we have 
$$
\big\| \sum_j T_j \big\| _{B(H)}\leq \sqrt{2}C.
$$

\begin{lem}\label{lem: delta L2 bdd}
Let $\sigma\in S_{\delta,\delta}^0$  with $0\leq \delta<1$. Then $T_\sigma^c$ is bounded on $L_2(\N)$.
\end{lem}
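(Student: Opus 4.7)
The plan is to imitate the proof of the previous lemma, but to replace the unit-scale resolution of the unit in the frequency variable by a dyadic Littlewood--Paley decomposition adapted to the exotic scaling of $S_{\delta,\delta}^0$. By the Plancherel argument opening the proof of Lemma~\ref{lem: L2 bdd}, it suffices to prove the $L_2(\N)$-boundedness of
\[
S^c_\sigma(f)(s) = \int_{\R}\sigma(s,\xi)f(\xi)e^{2\pi{\rm i}s\cdot\xi}\,d\xi.
\]
Decompose $\sigma=\sum_{k\geq 0}\sigma\widehat\varphi_k$ using the partition $(\widehat\varphi_k)_{k\geq 0}$ from \eqref{condition-phi}. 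Since $\supp\widehat\varphi_k\cap\supp\widehat\varphi_{k'}$ is null for $|k-k'|\geq 2$, split the sum into two subfamilies according to the parity of $k$; each consists of pieces with pairwise a.e.\ disjoint $\xi$-supports, and by the triangle inequality it suffices to treat one of them, which we write as $\sum_{j\geq 0}T_j$ with $T_j:=S^c_{\sigma\widehat\varphi_{2j}}$. We will apply the weak Cotlar--Stein lemma to the family $(T_j)_j$, which requires verifying (i) $\sup_j\|T_j\|_{B(L_2(\N))}\leq C$, (ii) $T_jT_k^*=0$ for $j\neq k$, and (iii) $\|T_j^*T_k\|_{B(L_2(\N))}\leq c(j)c(k)$ for $j\neq k$, with $\sum_j c(j)<\infty$.

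Condition (ii) is immediate from the previous computation of the kernel: for $j\neq k$ the integrand contains the product $\widehat\varphi_{2j}(\xi)\widehat\varphi_{2k}(\xi)$, which vanishes a.e. For (i), perform the change of variables $\tilde s=2^{2\delta j}s$, $\tilde\xi=2^{-2\delta j}\xi$, which preserves the phase $s\cdot\xi$ and the $L_2(\N)$ operator norm. The rescaled symbol
\[
\tilde\sigma_j(\tilde s,\tilde\xi) = \sigma(2^{-2\delta j}\tilde s,\,2^{2\delta j}\tilde\xi)\,\widehat\varphi_{2j}(2^{2\delta j}\tilde\xi)
\]
lies in $S_{0,0}^0$ with bounds uniform in $j$: on its support one has $|2^{2\delta j}\tilde\xi|\sim 2^{2j}$, so the rescaling factor $2^{-2\delta j|\gamma|_1+2\delta j|\beta|_1}$ produced by a derivative $D_{\tilde s}^\gamma D_{\tilde\xi}^\beta$ exactly cancels the $S_{\delta,\delta}^0$ growth $(1+|\xi|)^{\delta|\gamma|_1-\delta|\beta|_1}$, and derivatives falling on the cutoff $\widehat\varphi_{2j}(2^{2\delta j}\cdot)$ produce only lower-order factors of $2^{-2(1-\delta)j}$, which are harmless because $\delta<1$. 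Lemma~\ref{lem: L2 bdd} then yields $\sup_j\|T_j\|_{B(L_2(\N))}\leq C$.

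The principal obstacle is condition (iii). Formally $T_j^*T_k$ is an integral operator with kernel
\[
\sigma_{j,k}(\xi,\eta) = \int_{\R}\sigma^*_{2j}(s,\xi)\,\sigma_{2k}(s,\eta)\,e^{2\pi{\rm i}s\cdot(\eta-\xi)}\,ds,
\]
but this $s$-integral does not converge absolutely, since $\sigma$ carries no $s$-decay. To remedy this, I would introduce a further smooth resolution of the unit in $s$, rescaled to the scale $2^{-2\delta\max(j,k)}$ natural to the relevant dyadic annulus. Inside each such $s$-cell I would integrate by parts in $s$ via the identity $(1-\Delta_s)^N e^{2\pi{\rm i}s\cdot(\eta-\xi)}=(1+4\pi^2|\eta-\xi|^2)^N e^{2\pi{\rm i}s\cdot(\eta-\xi)}$: each $s$-derivative falling on $\sigma^*_{2j}\sigma_{2k}$ costs a factor of at most $2^{2\delta\max(j,k)}$ by the $S_{\delta,\delta}^0$ bounds, while the pre-factor $(1+|\eta-\xi|^2)^{-N}\lesssim 2^{-4N\max(j,k)}$ (since $j\neq k$ forces $|\eta-\xi|\sim 2^{2\max(j,k)}$) contributes genuine decay. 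The hypothesis $\delta<1$ is used exactly here to ensure that the net factor $2^{-4N(1-\delta)\max(j,k)}$ beats the polynomial growth coming from the summation over the auxiliary $s$-cells; a Schur-type estimate in $(\xi,\eta)$ then yields
\[
\|T_j^*T_k\|_{B(L_2(\N))}\lesssim 2^{-\varepsilon\max(j,k)} \leq 2^{-\varepsilon(j+k)/2}
\]
for some $\varepsilon>0$, so the choice $c(j)=C\cdot 2^{-\varepsilon j/2}$ completes the verification. The hard part of the proof is precisely this bookkeeping: carefully introducing the $s$-localization adapted to the dyadic scale, keeping track of how many $s$-cells are relevant to a given pair $(\xi,\eta)$, and making sure the oscillatory gain dominates the growth of the $s$-derivatives.
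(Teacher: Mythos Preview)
Your overall strategy---Littlewood--Paley decomposition in the frequency variable, parity splitting, and the weak Cotlar--Stein lemma---is exactly the one the paper uses, and your treatment of conditions (i) and (ii) is correct; in particular, the rescaling to $S_{0,0}^0$ for (i) matches the paper's use of $\widetilde\sigma_j=\sigma_j(2^{-j\delta}\cdot,2^{j\delta}\cdot)$. Note, incidentally, that your passage through $S_\sigma^c$ is cosmetic: since $T_j=T^c_{\sigma\widehat\varphi_{2j}}\circ\mathcal F^{-1}$, your $T_j^*T_k$ and $T_jT_k^*$ have the same operator norms as the paper's $(T^c_{\sigma_k})^*T^c_{\sigma_j}$ and $T^c_{\sigma_j}(T^c_{\sigma_k})^*$.

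The genuine gap is in condition (iii). The object $\sigma_{j,k}(\xi,\eta)=\int\sigma_{2j}^*(s,\xi)\sigma_{2k}(s,\eta)e^{2\pi{\rm i}s\cdot(\eta-\xi)}\,ds$ is not a function but only a tempered distribution, because $\sigma$ carries no decay whatsoever in $s$. Your proposed remedy---a smooth resolution of the unit in $s$ at scale $2^{-2\delta\max(j,k)}$ followed by integration by parts---does not close: inside each $s$-cell you do gain the factor $(1+|\eta-\xi|^2)^{-N}$, but the bound on each cell is uniform in the cell index and there are \emph{infinitely many} cells, with no mechanism (no $s$-decay, no orthogonality in $L_2(\R_\xi)$) for summing them. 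Your phrase ``how many $s$-cells are relevant to a given pair $(\xi,\eta)$'' suggests a misconception: all of them are equally relevant, so the sum diverges and no Schur-type estimate on $\sigma_{j,k}$ is available.

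The fix is to compute the kernel of $T_j^*T_k$ in the \emph{physical} representation instead. Undoing your Plancherel step, one looks at $(T^c_{\sigma_k})^*T^c_{\sigma_j}$ acting on $L_2(\R_s)$, with kernel
\[
K(s,r)=\int_{\R}\!\int_{\R}\!\int_{\R}\sigma_k^*(t,\xi)\,\sigma_j(t,\eta)\,e^{2\pi{\rm i}[\eta\cdot(t-r)+\xi\cdot(s-t)]}\,dt\,d\eta\,d\xi.
\]
Here the $\xi$- and $\eta$-integrals range over compact dyadic annuli, and integrating by parts in $\xi$ and $\eta$ produces factors $(1+|s-t|^2)^{-N}(1+|t-r|^2)^{-N}$; it is precisely these factors that supply the missing $t$-decay and render the $t$-integral absolutely convergent. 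A further integration by parts in $t$ then gives the gain $(1+|\xi-\eta|^2)^{-N}\sim 2^{-2N\max(j,k)}$ at the cost of $t$-derivatives of size $2^{\delta\max(j,k)}$ each, yielding $\|K(s,r)\|_\M\lesssim 4^{\max(j,k)((\delta-1)N+d)}\int(1+|s-t|)^{-2N}(1+|t-r|)^{-2N}\,dt$, from which a Schur estimate gives $c(j)=2^{j((\delta-1)N+d)}$, summable once $N>d/(1-\delta)$. The moral: the divergence you encountered is an artifact of working on the Fourier side for this step; in physical space the compactness of the $\xi$-support of each dyadic piece is exactly what makes the kernel a genuine function.
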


\begin{proof}
To prove this lemma, we apply Cotlar's lemma as stated above. Let $(\widehat{\varphi}_j)_{j\geq 0}$ be the resolution of the unit defined in \eqref{eq:resolution of unity}. We can decompose $T_\sigma^c$ as follows:
$$
T_\sigma^c =\sum_{j=0}^\infty T_{\sigma_j}^c=\sum_{j \text{ even }} T_{\sigma_j}^c+\sum_{j \text{ odd }} T_{\sigma_j}^c,
$$
where $\sigma_j(s,\xi)= \widehat{\varphi}_j (\xi)\sigma (s,\xi) $. Note  that the symbols in either odd or even summand have disjoint $\xi$-supports. We will only treat the odd part, since the other part can be dealt with in a similar way. It is clear that $T_{\sigma_j}^c(T_{\sigma_k}^c)^*=0$ if $j\neq k$, since $T_{\sigma_j}^c(T_{\sigma_k}^c)^*=T_{\sigma}^c M_{\widehat{\varphi}_j }M_{\overline{\widehat{\varphi}}_k}(T_{\sigma}^c)^*$ and 
$\widehat{\varphi}_j$, $\widehat{\varphi}_k$  have disjoint supports. Now let us estimate the second inequality in \eqref{CS-orth}, i.e. the norm of $(T_{\sigma_k}^c)^*T_{\sigma_j}^c$. Since
$$
(T_{\sigma_k}^c)^*(f)(s)=\int_{\R}\int_{\R}\sigma_k^*(t,\xi)f(t) e^{2\pi{\rm i}\xi\cdot (s-t)}dtd\xi, 
$$
and 
$$
T_{\sigma_j}^c(f)(t)=\int_{\R}\int_{\R}\sigma_j(t,\eta)f(r) e^{2\pi{\rm i}\eta\cdot (t-r)}drd\eta.
$$
Then we have
$$
(T_{\sigma_k}^c)^*T_{\sigma_j}^c(f)(s)=\int_{\R}K(s,r)f(r)dr,
$$
with 
$$
K(s,r)=\int_{\R}\int_{\R}\int_{\R}\sigma_k^*(t,\xi)\sigma_j(t,\eta)e^{2\pi{\rm i}[\eta\cdot (t-r)+\xi \cdot (s-t)]}dt d\eta d\xi.
$$
Writing 
$$
e^{2\pi{\rm i}(\eta-\xi)\cdot t}= \frac{(1-\Delta_t)^N}{(1+4\pi^2|\xi-\eta|^2)^N}  e^{2\pi{\rm i}(\eta-\xi)\cdot t},
$$
$$
e^{2\pi{\rm i}(t-r)\cdot \eta}= \frac{(1-\Delta_\eta)^N}{(1+4\pi^2|t-r|^2)^N}  e^{2\pi{\rm i}(t-r)\cdot \eta},
$$
and 
$$
e^{2\pi{\rm i}(s-t)\cdot \xi}= \frac{(1-\Delta_\xi)^N}{(1+4\pi^2|s-t|^2)^N} e^{2\pi{\rm i}(s-t)\cdot \xi},
$$
we use the integration by parts with respect to the variables $t$, $\xi$ and $\eta$. By standard calculation (see \cite[Theorem 2, p. 286]{Stein1993} for more details), we get
$$
\| K(s,r) \|_\M\lesssim 4^{\max (k,j)((\delta-1)N+d)}\int Q(s-t)Q(t-r)dt,
$$
where $Q(t)=(1+|t|)^{-2N}$, if $k\neq j$. Denote $K_0(s,r)=\int Q(s-t)Q(t-r)dt$, then 
\begin{equation}\label{eq: kernel}
\int_{\R}K_0(s,r)ds=\int_{\R}K_0(s,r)dr=\big(\int_{\R}(1+|t|)^{-2N}dt\big)^2<\infty. 
\end{equation}
For any $f\in L_2(\N)$, there exists $g\in L_2(\N)$ with norm one such that 
$$
\big\| (T_{\sigma_k}^c)^*T_{\sigma_j}^c f  \big\|_{L_2(\N)}=\Big|\tau \int_{\R} \int_{\R} K(s,r)f(r)g(s)drds\Big|.
$$
Applying the H\"older inequality and \eqref{eq: kernel}, we get 
\begin{equation*}
\begin{split}
& \Big|\tau \int_{\R} \int_{\R} K(s,r)f(r)g(s)drds\Big| \\
& \leq  \big( \tau \int_{\R} \int_{\R} \| K(s,r)\|_\M |f(r)|^2dsdr\big)^\frac{1}{2} \big( \tau \int_{\R} \int_{\R} \| K(s,r)\|_\M |g(s)|^2dsdr\big) ^\frac{1}{2} \\
& \lesssim 4^{\max (k,j)((\delta-1)N+d)}\| f\| _{L_2(\N)},
\end{split}
\end{equation*}
which implies that 
$$
\big\| (T_{\sigma_k}^c)^*T_{\sigma_j}^c\big\|_{B(L_2(\N))}\lesssim  c(j)c(k), \quad j\neq k,
$$
with $c(j)=2^{j((\delta-1)N+d)}$. If we take $N>\frac{d}{1-\delta}$, the sequence $\big(c(j)\big)_j$ is summable. 

In order to  apply Cotlar-Stein's lemma, it remains to show that $T_{\sigma_j}^c$'s satisfy \eqref{CS-uniform}. To this end, we do some technical modifications. Set 
$$
\widetilde{\sigma}_j=\sigma_j (2^{-j\delta}\cdot, 2^{j\delta}\cdot).
$$
We can easily check that the $\widetilde{\sigma}_j$'s belong to $S_{0,0}^0$, uniformly in $j$. Then, by Lemma \ref{lem: L2 bdd}, the $T_{\widetilde{\sigma}_j}^c$'s are bounded on  $L_2(\N)$ uniformly in $j$. If $\Lambda_j$ denotes the dilation operator given by 
$$
\Lambda_j (f)=f(2^{j\delta}\cdot),
$$ 
then, we can easily verify that
$$
T_{\sigma_j}^c=\Lambda_j T_{\widetilde{\sigma}_j}^c\Lambda_j^{-1}.
$$
Thus, $$
\|T_{\sigma_j}^c\|_{B(L_2(\N))} \leq \| T_{\widetilde{\sigma}_j}^c \| _{B(L_2(\N))}<\infty.
$$
Therefore, $(T_{\sigma_j}^c)_{j\geq 0}$ satisfy the assumptions of Cotlar's lemma. So we get 
$$
\|T_{\sigma}^c\|_{B(L_2(\N))}=\|\sum_{j=0}^\infty T_{\sigma_j}^c\|_{B(L_2(\N))} < \infty.
$$
Thus, $T_{\sigma}^c$ is bounded on $L_2(\N)$.
\end{proof}

Let $0\leq \delta <1$. Since $S_{1,\delta}^0\subset S_{\delta,\delta}^0$, the following corollary is obvious.
\begin{cor}\label{cor: L_2 bdd}
Let $\sigma\in S_{1,\delta}^0$  with $0\leq \delta<1$. Then $T_\sigma^c$ is bounded on $L_2(\N)$.
\end{cor}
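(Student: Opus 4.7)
The statement is an immediate consequence of Lemma \ref{lem: delta L2 bdd}, so my plan is simply to verify the claimed inclusion $S_{1,\delta}^0\subset S_{\delta,\delta}^0$ and then invoke that lemma as a black box.

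To check the inclusion, suppose $\sigma\in S_{1,\delta}^0$, so that for every pair of multi-indices $\gamma,\beta$,
\[
\|D_s^\gamma D_\xi^\beta \sigma(s,\xi)\|_\M \leq C_{\gamma,\beta}(1+|\xi|)^{\delta|\gamma|_1 - |\beta|_1}.
\]
Since $0\leq \delta \leq 1$, we have $-|\beta|_1 \leq -\delta|\beta|_1$, and therefore
\[
(1+|\xi|)^{\delta|\gamma|_1 - |\beta|_1} \leq (1+|\xi|)^{\delta|\gamma|_1 - \delta|\beta|_1},
\]
which is precisely the bound required for membership in $S_{\delta,\delta}^0$. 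Thus $\sigma\in S_{\delta,\delta}^0$ with the same constants $C_{\gamma,\beta}$.

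With the inclusion established, Lemma \ref{lem: delta L2 bdd} applied to $\sigma$ viewed as an element of $S_{\delta,\delta}^0$ yields directly the $L_2(\N)$-boundedness of $T_\sigma^c$, completing the proof. There is no real obstacle here; the only substantive work is contained in the preceding lemma, and the corollary merely records that since the forbidden/exotic class $S_{\delta,\delta}^0$ is strictly larger than the H\"ormander class $S_{1,\delta}^0$, the $L_2$-theorem is inherited without any additional argument.
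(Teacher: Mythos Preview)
Your proof is correct and follows exactly the same approach as the paper: the paper simply notes that since $S_{1,\delta}^0\subset S_{\delta,\delta}^0$, the corollary is obvious from Lemma~\ref{lem: delta L2 bdd}. Your explicit verification of this inclusion is a welcome addition but not strictly necessary.
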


Furthermore, we can deduce the boundedness of $T_\sigma^c$ on $L_1\big(\M;L_2^c(\R)\big)$ from the above corollary. 

\begin{lem}\label{lem:L_1 bdd}
Let $\sigma\in S_{1,\delta}^0$  with $0\leq \delta<1$. Then $T_\sigma^c$ is bounded on $L_1\big(\M;L_2^c(\R)\big)$.
\end{lem}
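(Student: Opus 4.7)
The plan is to deduce the $L_1(\M;L_2^c(\R))$-boundedness of $T_\sigma^c$ directly from its $L_2(\N)$-boundedness (Corollary 4.4) by exploiting one structural feature: $T_\sigma^c$ is \emph{right $\M$-linear}. Indeed, since $\widehat{fa}(\xi)=\widehat{f}(\xi)\,a$ and $\sigma(s,\xi)$ multiplies from the left in \eqref{intro-pdo-c}, one has $T_\sigma^c(fa)=(T_\sigma^c f)\,a$ for every $a\in\M$. This is the abstract reason a bound on the Hilbert space $L_2(\N)$ propagates to a bound on the column module $L_1(\M;L_2^c(\R))$.

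First I would upgrade the scalar $L_2(\N)$-bound to the operator-valued inequality
\begin{equation*}
\int_\R (T_\sigma^c v)^*(s)(T_\sigma^c v)(s)\,ds \;\leq\; C^2\int_\R v^*(s)v(s)\,ds \quad\text{in }L_1(\M)^+,
\end{equation*}
valid for every $v\in L_2(\N)$, where $C$ is the constant from Corollary 4.4. Testing both sides against an arbitrary positive $\rho\in\M\cap L_1(\M)$ and using right $\M$-linearity to write $T_\sigma^c(v\rho^{1/2})=(T_\sigma^c v)\rho^{1/2}$, one obtains
\begin{equation*}
\tau\Big(\rho\int (T_\sigma^c v)^*(T_\sigma^c v)\Big)=\|T_\sigma^c(v\rho^{1/2})\|_{L_2(\N)}^2\leq C^2\|v\rho^{1/2}\|_{L_2(\N)}^2=C^2\,\tau\Big(\rho\int v^*v\Big),
\end{equation*}
so the desired operator inequality follows by density of $\M\cap L_1(\M)^+$ in $L_1(\M)^+$.

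Next I would invoke the standard right-handed factorization of the column space $L_1(\M;L_2^c)$: for $f\in L_1(\M;L_2^c(\R))$, set $a:=(\int f^*f)^{1/2}\in L_1(\M)^+$; then $\|a\|_{L_1(\M)}=\|f\|_{L_1(\M;L_2^c)}$ and $f=v\,a$ for some $v$ with $\int v^*v\leq\mathbf{1}_\M$ (when $a$ is not invertible, regularize via the spectral truncation $p_n=\chi_{[1/n,n]}(a)$ and pass to the limit). Right $\M$-linearity gives $T_\sigma^c f=(T_\sigma^c v)\,a$, hence
\begin{equation*}
\int (T_\sigma^c f)^*(T_\sigma^c f)=a\Big[\int (T_\sigma^c v)^*(T_\sigma^c v)\Big]a\leq C^2\,a\Big[\int v^*v\Big]a\leq C^2\,a^2=C^2\int f^*f.
\end{equation*}
Operator monotonicity of $x\mapsto x^{1/2}$ and then the $L_1(\M)$-norm deliver $\|T_\sigma^c f\|_{L_1(\M;L_2^c)}\leq C\,\|f\|_{L_1(\M;L_2^c)}$.

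The only point of technical care is that the $v$ arising from the factorization is bounded in $L_\infty(\M;L_2^c(\R))$ rather than square-integrable in $L_2(\N)$, so Step~1's inequality must be transferred to this setting; I expect this to be the main obstacle, but it is resolved by applying the inequality to the approximants $v_n:=v\,p_n$ associated with the spectral truncation $p_n=\chi_{[1/n,n]}(a)$ and invoking dominated convergence in the weak-$*$ topology on $\M$.
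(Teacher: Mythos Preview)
Your proposal is correct, and the route is genuinely different from the paper's. Both arguments rest on the same two ingredients---right $\M$-linearity of $T_\sigma^c$ and its $L_2(\N)$-boundedness (Corollary~\ref{cor: L_2 bdd})---but organize them differently. The paper works dually: it invokes Proposition~\ref{prop: adjoint} to write $(T_\sigma^c)^*=T_{\tilde\sigma}^c$ with $\tilde\sigma\in S^0_{1,\delta}$, proves $T_{\tilde\sigma}^c$ bounded on $L_\infty(\M;L_2^c(\R))$ via the norming-vector trick $\|A\|_\M=\sup_{\|u\|_{L_2(\M)}=1}\|Au\|_{L_2(\M)}$, and then dualizes to $L_1$. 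Your route bypasses the adjoint calculus entirely and goes through the column factorization $f=va$. What this buys you: the argument is more self-contained (no Proposition~\ref{prop: adjoint}), and---interestingly---it is precisely the argument the paper itself adopts later in Lemma~\ref{lem: S11 L1 bdd} for the forbidden-symbol case $\delta=1$, where the adjoint need not lie in the class and the duality route is blocked. So you have found the robust version of the proof one lemma early.

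Two small corrections. In Step~1, the phrase ``by density of $\M\cap L_1(\M)^+$ in $L_1(\M)^+$'' points the wrong way: to conclude $A\leq B$ for $A,B\in L_1(\M)_{sa}$ from $\tau(\rho A)\leq\tau(\rho B)$, you need $\rho$ to range over a $\sigma$-weakly dense subset of $\M^+$, not of $L_1(\M)^+$; fortunately $(\M\cap L_1(\M))^+$ is such a subset in a semifinite algebra, so the conclusion stands. In Step~3, your approximants $v_n=v\,p_n$ do lie in $L_2(\N)$, but you should say why: since $a\in L_1(\M)^+$ and $a\geq n^{-1}p_n$, the projection $p_n=\chi_{[1/n,n]}(a)$ is $\tau$-finite, hence $\tau(\int|v_n|^2)\leq\tau(p_n)<\infty$. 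With that observation the truncation argument closes cleanly.
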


\begin{proof}
Since $0\leq \delta<1$, Proposition \ref{prop: adjoint} tells us that the adjoint $(T_\sigma^c)^*$ of $T_\sigma^c$ is still in the class $S_{1,\delta}^0$. Thus, by duality,  it is enough to prove the boundedness of $(T_\sigma^c)^*$ on $L_\infty\big(\M;L_2^c(\R)\big)$. Indeed, there exists $u\in L_2(\M)$ with norm one such that 
\begin{equation*}
\begin{split}
\Big\| \big( \int_{\R}|(T_\sigma^c)^*(f)(s)|^2ds \big)^\frac{1}{2} \Big\|_{\M} & =\Big( \int_{\R}\langle |(T_\sigma^c)^*(f)(s)|^2 u,u \rangle_{L_2(\M)}ds \Big)^\frac{1}{2}\\
&= \Big( \int_{\R} \|(T_\sigma^c)^*(fu)(s)\|^2_{L_2(\M)}ds \Big)^\frac{1}{2}.
\end{split}
\end{equation*}
Then, applying Corollary \ref{cor: L_2 bdd} to $(T_\sigma^c)^*$, we get
$$
\Big( \int_{\R} \|(T_\sigma^c)^*(fu)(s)\|^2_{L_2(\M)}ds \Big)^\frac{1}{2}\lesssim  \Big( \int_{\R} \| f(s)u\|^2_{L_2(\M)}ds \Big)^\frac{1}{2}\leq \Big\| \big( \int_{\R}|f(s)|^2ds \big)^\frac{1}{2} \Big\|_{\M}.
$$
Thus, we conclude the boundedness of $T_\sigma^c$ on $L_1(\M;L_2^c(\R))$. 
\end{proof}

Now we are ready to prove the main theorem in this section.

\begin{proof}[Proof of Theorem \ref{thm:bdd}]
\emph{Step 1.}
 We begin with the special case $p=1$ and $\alpha =0$. Since $F_1^{0,c}(\R,\M)=\h_1^c(\R,\M)$ with equivalent norms,  the assertion is equivalent to saying that when $\sigma\in S_{1,\delta}^0$ with $0\leq \delta<1$, $T_\sigma^c$ is bounded on $\h_1^c(\R,\M)$. By the atomic decomposition stated in Theorem \ref{thm: atomic decop T_L}, it suffices to prove that, for any atom $b$ based on a cube with side length 1 and any atom $g$  based on a cube with side length less than 1, we have
\[
\| T_\sigma^c b \|_{\h_1^c}\lesssim 1 \quad \text{and}\quad \| T_\sigma^c g \|_{\h_1^c}\lesssim 1.
\] 
Corollary \ref{lem:moledule m norm} tells us that 
\[
\| T_\sigma^c b \|_{\h_1^c}\lesssim 1.
\]
Thus, it remains to consider the atom $g$  based on cube $Q$ with $|Q|<1$. Without loss of generality, we may assume that $Q$ is centered at the origin.
Let $(\mathcal{X}_j)_{j\in\mathbb{Z}^d}$ be the resolution of the unit defined in \eqref{eq: unit resolution} and $\mathcal{X}^Q_j= \mathcal{X}_j(l(Q)^{-1} \cdot)$ for $j\in\mathbb{Z}^d$. Then, we have $\supp \mathcal{X}^Q_j \subset l(Q)j+2Q$. Now, set $h_1=\sum_{j\in 6Q_{0,0}}\mathcal{X}^Q_j $ and $h_2=\sum_{j\notin 6Q_{0,0}}\mathcal{X}^Q_j $. By the support assumption of $\mathcal{X}^Q_j $, it is obvious that $\supp h_1\subset 8Q$, $\supp h_2\subset (4Q)^c$.  Moreover, 
$$
h_1(s)+h_2(s)=1 \quad, \forall\, s\in \R.
$$
Now we decompose $\sigma$  into two parts:
$$
\sigma (s,\xi)=h_1(s) \sigma (s,\xi)+h_2(s)\sigma (s,\xi) \stackrel{\mathrm{def}}{=}  \sigma ^1(s,\xi)+\sigma^2 (s,\xi).
$$
Note that $\sigma ^1$ and $\sigma ^2$ are still in the class $S_{1,\delta}^0$ and
$$
T_\sigma^c g=T_{\sigma^1}^c g+T_{\sigma^2}^c g.
$$
Firstly, we deal with  the symbol $\sigma ^1$ which has compact $s$-support. We consider the  adjoint operator $(T^{c}_{\sigma})^*$ of $T_\sigma^c$. 
Since $\delta<1$, by Proposition \ref{prop: adjoint}, there exists $\widetilde {\sigma}\in S_{1,\delta}^0$ such that 
$$
 (T^{c}_{\sigma})^*=T^{c}_{\widetilde{\sigma}}.
 $$
 If we take $\zeta_j(s)= \mathcal{X}^Q_j(s)\widetilde{\sigma}(s,0)^*$ for $j\in 6Q_{0,0}$, then $\zeta_j$ is an $\mathcal{M}$-valued infinitely differentiable function with all derivatives belonging to $L_\infty(\mathcal{N})$. Denote by $m_{\zeta_j }^c$ the pointwise multiplication $g\mapsto \zeta_j g$. Then, we have 
\begin{equation*}
\supp m_{\zeta_j }^c  g\subset l(Q)j+2Q.
\end{equation*}
and
\begin{equation}\label{eq: Mg size condition}
 \tau  \big(\int _{\R} | m_{\zeta_j}^c g(s) |^2 ds \big)^\frac{1}{2}  \lesssim |Q|^{-\frac{1}{2}}.
\end{equation}
This indicates that, except for the vanishing mean property, each $m_{\zeta_j}^c g$ coincides with a bounded multiple of an $\h_1^c$-atom defined in Definition \ref{def: atom h1}. Now let us set $\sigma^1_j(s,\xi)=\mathcal{X}^Q_j(s)\sigma(s,\xi)$ for $j\in 6Q_{0,0}$ and set $T_j^c=T^c_{\sigma^1_j}- m^c_{\zeta_j}$. It is clear that $\supp T_j^c g\subset l(Q)j+2Q$.  Since $(m_{\zeta_j}^c)^*=m_{{\zeta_j}^*}^c$ and $(T_{\sigma_j^1}^c)^*x=\widetilde{\sigma^1_j}(s,0)x={\zeta_j}^*x$ for every $x\in \M$, we have 
$$
\tau \big(\int _{l(Q)j+2Q} T_j^c g(s)ds\, \cdot x\big)=\langle T_j^c g,x \rangle=\langle g, (T_j^c)^*x\rangle=\langle g, (T^c_{\sigma_j^1}-m_{\zeta_j}^c)^*x\rangle=0.
$$ 
Hence, $T_j^c g$ has vanishing mean.
Moreover, applying  Lemma \ref{lem:L_1 bdd} and \eqref{eq: Mg size condition}, we get   
\begin{equation*}
\begin{split}
\tau \big( \int_{l(Q)j+2Q}|T_j^c g(s)|^2ds\big)^\frac{1}{2}
& \leq \tau \big( \int_{l(Q)j+2Q}|T_{\sigma_j^1}^c g(s)|^2ds\big)^\frac{1}{2}+ \tau\big( \int_{l(Q)j+2Q}|m_{\zeta_j}^c g(s)|^2ds\big)^\frac{1}{2}\\
&\lesssim \tau \big( \int_{2Q}|g(s)|^2ds\big)^\frac{1}{2} + |Q|^{-\frac{1}{2}} \lesssim |Q|^{-\frac{1}{2}}.
\end{split}
\end{equation*}
Combining the above estimates, we see that $T_j^c$ maps $\h_1^c$-atoms to $\h_1^c$-atoms. Thus, $T_j^c$ is bounded on $\h_1^c(\R,\M)$, and so are $T_{\sigma^1_j}^c$ and $T_{\sigma^1}^c$.

\emph{Step 2.}  Now let us consider $T^c_{\sigma^2}$. By Theorem \ref{thm: atomic decop T_L}, we may assume that $g$ has moment cancellations up to order $L> \frac d 2 -1$. Note that  $\supp  T^c_{\sigma^2}g \subset (4Q)^c$. And if $s\in (4Q)^c$, following the argument in \eqref{eq: s off origin} with $g$ in place of $a_{\mu,l}$, we get
$$
|T^c_{\sigma^2} g(s)|^2\lesssim l(Q)^{2L+2+d}  |s |^{-2d-2L-2}\int _{2Q}  | g(t) |^{2}dt.
$$
Then for $M<2L+2$,
\begin{equation}\label{eq: g image off origin}
\begin{split}
 &  \tau  \big(\int_{ (4Q)^{c}}  |T^c_{\sigma^2} g (s ) |^{2}  (1+l(Q)^{-1} |s | )^{d+M}ds \big)^{\frac{1}{2}}\\
 & \lesssim  l(Q)^{L+1-\frac{M}{2}}  \big(\int_{ (4Q)^{c}}  |s |^{-d-2L-2+M}ds \big)^{\frac{1}{2}}\cdot\tau  (\int _{2Q}  |g(t) |^{2}dt )^{\frac{1}{2}}\\
 & \lesssim  l(Q)^{L+1-\frac{M}{2}}l(Q)^{-L-1+\frac{M}{2}} |Q|^{-\frac{1}{2}}=   |Q |^{-\frac{1}{2}}.
\end{split}
\end{equation}
Moreover, we claim that $T^c_{\sigma^2}g$  can be decomposed as follows:
$$
T^c_{\sigma^2}g=\sum_{m\in \mathbb{Z}^d} \nu_m H_m,
$$ 
where $\sum_m |\nu_m |\lesssim1$ and the $H_m$'s are $\h_1^c$-atoms. Then, by  \eqref{thm:atomic h1}, we will get $\| T^c_{\sigma^2}g \|_{\h_1^c}\lesssim  1$.  Now let us prove the claim. Since $L>\frac d 2 -1$, we can choose $M$ such that $M>d$ and $M<2L+2$. Take $\nu_m =|Q|^{-\frac{1}{2}} (1+l(Q)^{-1} |m|)^{-\frac{d+M}{2}}$ and $H_m=\nu_m^{-1}\mathcal{X}_{m}T^c_{\sigma^2}g$, where $(\mathcal{X}_m)_{m\in\mathbb{Z}^d}$ denotes again the smooth resolution of the unit \eqref{eq: unit resolution}, i.e.
$$
1=\sum_{m\in\mathbb{Z}^d}\mathcal{X}_m(s), \quad \forall \, s\in \mathbb{R}^d.
$$
Applying \eqref{eq: g image off origin}, we have 
\begin{equation*}
\begin{split}
& \tau \big(\int_{2Q_{0,m}} |H_m(s)|^2 ds\big)^\frac{1}{2} \\
&  \lesssim \nu_m^{-1} (1+l(Q)^{-1} |m|)^{-\frac{d+M}{2}} \tau  \big(\int_{ (4Q)^{c}}  |T^c_{\sigma^2}g (s ) |^{2}  (1+l(Q)^{-1}  |s | )^{d+M}ds\big)^\frac{1}{2}\lesssim 1.
\end{split}
\end{equation*}
And the normalizing constants $\nu_m$ satisfy 
\begin{equation*}
\begin{split}
\sum_m |\nu_m | &=|Q |^{-\frac{1}{2}}\sum_m (1+l(Q)^{-1} |m|)^{-\frac{d+M}{2}}\\
& \leq  |Q|^{-\frac{1}{2}}\int_{\R} (1+l(Q)^{-1} |s|)^{-\frac{d+M}{2}}ds\lesssim 1.
\end{split}
\end{equation*}
Combining the estimates of $T_{\sigma_1}^c g$ and $T_{\sigma_2}^c g$, we conclude that $\| T_{\sigma}^c g \|_{\h_1^c}\lesssim 1$. Thus, $T_\sigma^c$ is bounded on $\h_1^c(\R,\M)$.

\emph{Step 3.} For the case where $p=1$ and $\alpha\neq 0$, we use the lifting property of Triebel-Lizorkin spaces (see \cite[Proposition 3.4]{XX18}.
By the property of the composition of pseudo-differential operators stated in Proposition \ref{prop: composition}, we see that 
$$
T_{\sigma^\alpha}^c
=J^{\alpha} T_\sigma^c J^{-\alpha} 
$$
is still a pseudo-differential operator with symbol $\sigma^\alpha$ in $S_{1,\delta}^0$. Then for any $f \in F_1^{\alpha,c}(\R,\M)$, we have
$$\|T_\sigma^c f \|_{F_1^{\alpha,c}} =  \| J^{-\alpha}     T_{\sigma^\alpha} ^c   J^\alpha f \|_{F_1^{\alpha,c }} \approx \|   T_{\sigma^\alpha} ^c   J^\alpha f \|_{\h_1^{ c }   }   \lesssim \|    J^\alpha f \|_{\h_1^{ c }   }  \approx \|  f \|_{F_1^{\alpha,c}} .
 $$
Hence, $T_\sigma^c$ is bounded on $F_1^{\alpha,c}(\R,\M)$.
 
\emph{Step 4.} Finally, we deal with the case $1<p\leq \infty$.  By the previous steps, $(T_\sigma^c)^*=T_{\widetilde{\sigma}}^c$ is bounded on
 $F_1^{-\alpha,c}(\R,\M)$ with $\alpha\in \mathbb{R}$, then it is clear that $T_\sigma^c$ is bounded on $F_\infty^{\alpha,c}(\R,\M)$.
 Given $1< p<\infty$ and  $\alpha \in \mathbb{R}$, by interpolation
$$\big( F_\infty^{\alpha,c}(\R,\M), F_1^{\alpha,c}(\R,\M)\big)_{\frac{1}{p}}=F_p^{\alpha,c}(\R,\M),$$
we get the boundedness of $T_\sigma^c$ on $F_p^{\alpha,c}(\R,\M)$.
\end{proof}

 \begin{rmk}
A special case of Theorem \ref{thm:bdd} is that if the symbol is scalar-valued, then 
$$\int_{\mathbb{R}^{d}}\sigma(s,\xi)\widehat{f}(\xi)e^{2\pi {\rm i}s\cdot\xi}d\xi = \int_{\mathbb{R}^{d}}\widehat{f}(\xi)\sigma(s,\xi)e^{2\pi {\rm i}s\cdot\xi}d\xi.$$ 
In this case, $T^c_{\sigma}$ is also bounded on $\h_{p}^r(\mathbb{R}^d,\mathcal{M})$ for any  $1\leq p \leq \infty$. By \eqref{equi-Lp-hp}, we deduce that $T^c_{\sigma}$ is bounded on $L_p(\N)$.

\end{rmk}

 \begin{cor}\label{cor:bdd}
Let $n,\alpha \in \mathbb{R}$, $0\leq \delta<1$ and $\sigma\in S_{1,\delta}^{n}$.  Then $T^c_{\sigma}$ is bounded from $F_{p}^{\alpha,c}(\mathbb{R}^d,\mathcal{M})$ to $F_{p}^{\alpha-n,c}(\mathbb{R}^d,\mathcal{M})$ for any $1\leq p \leq \infty$.
 \end{cor}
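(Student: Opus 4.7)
The plan is to reduce the corollary to Theorem \ref{thm:bdd} by factoring out the order $n$ via the Bessel potential $J^n$. Recall that the symbol $J_n(\xi)=(1+|\xi|^2)^{n/2}$ of $J^n$ belongs to $S_{1,0}^n\subset S_{1,\delta}^n$, and similarly $J_{-n}\in S_{1,0}^{-n}\subset S_{1,\delta}^{-n}$. The lifting property of Triebel-Lizorkin spaces recalled in the preliminaries says that for any $\beta\in\mathbb{R}$, $J^\beta$ is an isomorphism between $F_p^{\alpha,c}(\R,\M)$ and $F_p^{\alpha-\beta,c}(\R,\M)$, so in particular
\[
J^n:\,F_p^{\alpha,c}(\R,\M)\longrightarrow F_p^{\alpha-n,c}(\R,\M)
\]
is an isomorphism for every $1\leq p\leq\infty$.

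Next, I would apply Proposition \ref{prop: composition} to the symbols $\sigma\in S_{1,\delta}^{n}$ and $J_{-n}\in S_{1,\delta}^{-n}$. This gives a symbol $\sigma'\in S_{1,\delta}^{n+(-n)}=S_{1,\delta}^{0}$ such that
\[
T_{\sigma'}^c = T_{\sigma}^c\circ J^{-n}.
\]
Equivalently, $T_{\sigma}^c=T_{\sigma'}^c\circ J^{n}$ as operators on $\mathcal{S}'(\R;\M)$. By Theorem \ref{thm:bdd}, since $\sigma'\in S_{1,\delta}^{0}$ with $0\leq\delta<1$, the operator $T_{\sigma'}^c$ is bounded on $F_p^{\alpha-n,c}(\R,\M)$ for every $1\leq p\leq\infty$.

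Combining the two facts, for any $f\in F_p^{\alpha,c}(\R,\M)$ we get
\[
\|T_{\sigma}^c f\|_{F_p^{\alpha-n,c}}=\|T_{\sigma'}^c(J^n f)\|_{F_p^{\alpha-n,c}}\lesssim \|J^n f\|_{F_p^{\alpha-n,c}}\approx \|f\|_{F_p^{\alpha,c}},
\]
which is exactly the desired boundedness. No real obstacle is expected here: the corollary is a direct consequence of the composition rule of Proposition \ref{prop: composition} (which places $\sigma'$ in $S_{1,\delta}^0$) together with the order-zero case already established in Theorem \ref{thm:bdd} and the lifting property of the Triebel-Lizorkin scale. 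The only point requiring mild care is to ensure that all compositions are meaningful at the distributional level, which follows from the continuity of the involved operators on $\mathcal{S}'(\R;\M)$ established in Section~\ref{section-pdo-def}.
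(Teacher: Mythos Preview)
Your proposal is correct and follows essentially the same approach as the paper: factor $T_\sigma^c = T_{\sigma'}^c \circ J^n$ with $\sigma' \in S_{1,\delta}^0$ obtained from Proposition~\ref{prop: composition}, then apply Theorem~\ref{thm:bdd} to $T_{\sigma'}^c$ and the lifting property to $J^n$. The paper's proof is slightly more explicit in writing $\sigma'(s,\xi)=\sigma(s,\xi)(1+|\xi|^2)^{-n/2}$, but the argument is otherwise identical.
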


 \begin{proof}
 Recall that the Bessel potential of order $n$ maps $F_{p}^{\alpha,c}$ isomorphically onto $F_{p}^{\alpha-n,c}$. If $\sigma \in S_{1,\delta}^{n}$, by Proposition \ref{prop: composition}, we see that
$$
\sigma(s,\xi )  (1+|\xi |^2 )^{-\frac{n}{2}}\in S_{1,\delta}^{0},
$$
and its corresponding pseudo-differential operator is $T^c_{\sigma}   J^{-n}$. Since $T_\sigma^c = T_\sigma^c   J^{-n}   J^n$, the assertion follows obviously from Theorem \ref{thm:bdd}.
 \end{proof}

\section{Forbidden symbols} \label{section-forbidden}

The purpose of this section is to extend the boundedness results obtained in the previous one to the  pseudo-differential operators with forbidden symbols, i.e. the symbols in the class $S_{1,1}^n$. There are two main differences between these operators and those with symbols in $S_{1,\delta}^n$ with $0\leq \delta<1$. The first one is that  when $\sigma \in S_{1,1}^0$, $T_\sigma^c$ is not necessarily bounded on $L_2(\N)$. The second one is that $S_{1,1}^0$ is not closed under the products and adjoints. Fortunately, if the function spaces have a positive degree of smoothness, the operators with symbols in $S_{1,1}^0$ will be bounded on them.

 In the classical theory, the regularity of operators with forbidden symbols on Sobolev spaces $H_p^\alpha(\R)$, Besov spaces $B_{pq}^\alpha(\R)$ and Triebel Lizorkin spaces $F_{pq}^\alpha(\R)$ with $\alpha>0$ has been widely investigated, see \cite{{Meyer-1980}, {Meyer-1981}, {Bourdaud-1988}, {Runst-1985}, {Torres-1990}}. 
Our first result in this section concerns the regularity of pseudo-differential operators with forbidden
symbols on operator-valued Sobolev spaces. Let us now give some background on these function spaces.

 For $\alpha \in \mathbb{R}, 1\leq p \leq \infty$ and a Banach space $X$, the potential Sobolev space $H_p^\alpha(\R;X)$ is the space of all distributions in $S'(\R;X)$ which have finite Sobolev norm  $\| f\|_{H_p^\alpha}=\| J^\alpha f \|_{L_p(\R;X)}$. It is well known that the potential Sobolev spaces are closely related to Besov spaces. We still use the resolution of the unit $(\varphi_k)_{k\geq 0}$ introduced in \eqref{eq:resolution of unity} to define Besov spaces. Given $\alpha \in \R$ and $1\leq p,q \leq \infty$, the Besov space $B_{p,q}^\alpha (\R;X)$ is defined to be the subspace of $S'(\R;X)$ consisting of all $f$ such that 
$$
\|f\|_{B_{p,q}^\alpha}=\Big(\sum_{k\geq 0}2^{qk \alpha}\| \varphi_k*f\|_{L_p(\R;X)}^q \Big)^\frac{1}{q}<\infty.
$$
The above vector-valued Besov spaces $B_{p,q}^\alpha (\R;X)$ have been studied by many authors, see for instance \cite{Amann-97}.

Instead of the above defined Banach-valued spaces, we prefer to study the operator-valued spaces $H_p^\alpha(\R;L_p(\M))$ and $B_{p,q}^\alpha (\R;L_p(\M))$. Obviously, the main difference is that the Banach space $X$ varies for different $p$.
The following inclusions are easy to check for every $1\leq p \leq \infty$,
$$B_{p,1}^\alpha (\R;L_p(\M)) \subset  H_p^\alpha(\R;L_p(\M))  \subset B_{p,\infty}^\alpha (\R;L_p(\M)).$$
Besov spaces are stable under real interpolation. 
 More precisely, if $\alpha_0,\alpha_1 \in \mathbb{R}$, $\alpha_0 \neq \alpha_1$ and $0<\theta<1$, then 
 \begin{equation}\label{inter-Besov}
\big(B_{p,q_0}^{\alpha_0}(\R;L_p(\M)), B_{p,q_1}^{\alpha_1}(\R; L_p(\M))  \big)_{\theta,q}= B_{p,q}^{\alpha}(\R;L_p(\M)),
 \end{equation}
 for $\alpha=(1-\theta)\alpha_0+\theta \alpha_1, \;p,q,q_0,q_1\in [1,\infty]$. This result can be deduced from its Banach-valued counterpart in \cite{Amann-97}, or in the same way as how we did in \cite{XXY17} for noncommutative tori.

The following lemma states the regularity of pseudo-differential operators with forbidden symbols on $H_2^\alpha(\R;L_2(\M))$ for $\alpha>0$.

 \begin{lem}\label{lem: bdd exotic Sobolev}
Let $\sigma\in S_{1,1}^{0}$. Then $T_\sigma^c$ is bounded on $H_2^\alpha(\R; L_2(\M))$ for any $\alpha > 0$. 
 \end{lem}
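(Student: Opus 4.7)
The strategy is a paradifferential decomposition of the symbol, in the spirit of Bony and Meyer. Using the Littlewood--Paley resolution $(\widehat\varphi_k)_{k\geq 0}$ from \eqref{eq:resolution of unity}, I set $\sigma^k(s,\xi):=\widehat\varphi_k(\xi)\sigma(s,\xi)$ and, for each $k$, further decompose $\sigma^k$ in the $s$-variable as $\sigma^k=\sum_{j\geq 0}\sigma^k_j$, with $\sigma^k_j:=\varphi_j*_s\sigma^k$, so that $\sigma^k_j$ has $s$-Fourier spectrum in $|\eta|\sim 2^j$ for $j\geq 1$ and in $|\eta|\lesssim 1$ for $j=0$. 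I then split $\sigma=\sigma_{I}+\sigma_{II}+\sigma_{III}$ according to whether $j\leq k-3$, $|j-k|\leq 2$, or $j\geq k+3$; these are the three regimes in which the $s$-frequency of $\sigma$ is much smaller than, comparable to, or much larger than its $\xi$-frequency.

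For $\sigma_I$, the block $\sigma_I^k:=\sum_{j\leq k-3}\sigma^k_j$ has $\xi$-support in $|\xi|\sim 2^k$ and $s$-spectrum bounded by $2^{k-2}$, so $T^c_{\sigma_I^k}f$ has its $s$-Fourier transform supported in the annulus $|\eta|\sim 2^k$. A Schur test on the associated kernel, which satisfies $\|K_I^k(s,u)\|_\M\lesssim 2^{kd}(1+2^k|u|)^{-N}$ for every $N$, gives $\|T^c_{\sigma_I^k}\|_{L_2(\N)\to L_2(\N)}\lesssim 1$ uniformly in $k$. Since only the Littlewood--Paley piece $\varphi_k*f$ contributes to $T^c_{\sigma_I^k}f$, Plancherel together with the Littlewood--Paley characterisation of $H_2^\alpha$ yields $\|T^c_{\sigma_I}f\|_{H_2^\alpha}^2\approx\sum_k 2^{2k\alpha}\|T^c_{\sigma_I^k}f\|_{L_2(\N)}^2\lesssim\|f\|_{H_2^\alpha}^2$ for every $\alpha\in\mathbb{R}$. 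For $\sigma_{II}$, the analogous block $\sigma_{II}^k:=\sum_{|j-k|\leq 2}\sigma^k_j$ is again uniformly $L_2(\N)$-bounded, but $T^c_{\sigma_{II}^k}f$ now has $s$-Fourier support in the \emph{ball} $|\eta|\lesssim 2^{k+2}$. A Cauchy--Schwarz with weights $2^{-2\epsilon(k-m)}$ for some $0<\epsilon<\alpha$, followed by swapping the $k$ and $m$ sums, still produces $\|T^c_{\sigma_{II}}f\|_{H_2^\alpha}\lesssim\|f\|_{H_2^\alpha}$; this is the one step in which $\alpha>0$ is genuinely used.

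The most delicate piece is $\sigma_{III}$. For $j\geq 3$ I set $\Sigma_j:=\sum_{k\leq j-3}\sigma^k_j=\varphi_j*_s\sigma_{<j-2}$, where $\sigma_{<j-2}(s,\xi):=\sigma(s,\xi)\sum_{k\leq j-3}\widehat\varphi_k(\xi)$ has $\xi$-support in $|\xi|\lesssim 2^{j-2}$. Exploiting the arbitrary-order moment cancellations of $\varphi_j$ through an $N$-th order Taylor expansion of $\sigma_{<j-2}$ in the $s$-variable, together with the bounds $\|\partial_s^N\partial_\xi^\beta\sigma\|_\M\lesssim(1+|\xi|)^{N-|\beta|_1}$ coming from $\sigma\in S_{1,1}^0$, I would establish the pointwise estimate
\[
\|\partial_\xi^\beta\Sigma_j(s,\xi)\|_\M\lesssim 2^{-jN}(1+|\xi|)^{N-|\beta|_1}
\]
for every $N\geq |\beta|_1$. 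Transferring this bound to the kernel of $T^c_{\Sigma_j\widehat\varphi_k}$ and applying a Schur test then produces the quantitative off-diagonal estimate
\[
\|T^c_{\Sigma_j}(\varphi_k*f)\|_{L_2(\N)}\lesssim 2^{(k-j)N}\|\varphi_k*f\|_{L_2(\N)},\qquad k\leq j-3,
\]
for any $N$. Since $T^c_{\Sigma_j}f$ has $s$-Fourier support in the annulus $|\eta|\sim 2^j$, Plancherel--Littlewood--Paley reduces the claim to controlling $\sum_j 2^{2j\alpha}\|T^c_{\Sigma_j}f\|_{L_2(\N)}^2$. Choosing $N>\alpha$ and applying Young's inequality $\ell^1*\ell^2\hookrightarrow\ell^2$ to the convolution $(2^{(k-j)N})_{j-k\geq 3}*(2^{k\alpha}\|\varphi_k*f\|_{L_2(\N)})_k$ closes the estimate $\|T^c_{\sigma_{III}}f\|_{H_2^\alpha}\lesssim\|f\|_{H_2^\alpha}$ for every $\alpha\in\mathbb{R}$.

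The principal technical obstacle is the pointwise decay of $\partial_\xi^\beta\Sigma_j$: the argument requires an $s$-Taylor expansion that simultaneously exploits the $S_{1,1}^0$ bounds on $\sigma$ and the vanishing moments of the Littlewood--Paley kernels $\varphi_j$, all performed with the operator norm in $\M$. Conceptually, the hypothesis $\alpha>0$ is needed only for the $\sigma_{II}$ block, whose output lives in a Fourier \emph{ball} rather than an annulus---precisely the feature that causes $L_2(\N)$-boundedness of $T^c_\sigma$ to fail in general for $\sigma\in S_{1,1}^0$.
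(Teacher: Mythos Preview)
Your approach is correct and is a genuinely different route from the paper's. The paper decomposes $\sigma$ only in the $\xi$-variable via $\sigma_k=\sigma\widehat\varphi_k$, establishes the two endpoint mappings $T_\sigma^c:B_{2,1}^{0}(\R;L_2(\M))\to L_2(\N)$ and $T_\sigma^c:B_{2,1}^{\alpha}(\R;L_2(\M))\to B_{2,\infty}^{\alpha}(\R;L_2(\M))$ for each $\alpha>0$, and then concludes via real interpolation of Besov spaces together with the identification $H_2^\alpha=B_{2,2}^\alpha$. The step from $B_{2,1}^\alpha$ to $B_{2,\infty}^\alpha$ is handled not by decomposing $\sigma$ in the $s$-variable but by the algebraic identity $1=\big(a_0(\xi)+\sum_{j=1}^d a_j(\xi)\xi_j\big)^l=\sum_{|\gamma|_1\leq l}\sigma_\gamma(\xi)\xi^\gamma$, which trades Littlewood--Paley localisation of the output for $s$-derivatives of $T_{\sigma_k}^c f$ controlled by Lemma~\ref{lem: dyadic kernel estimate}.

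Your Bony--Meyer paradifferential decomposition is more direct: it avoids interpolation entirely, and it isolates very cleanly the single block ($\sigma_{II}$) in which the restriction $\alpha>0$ is genuinely needed---your remark that this is precisely the block whose output lies in a Fourier ball rather than an annulus is exactly the conceptual point. The paper's approach, on the other hand, has the advantage that it simultaneously yields the full scale of Besov mapping properties $T_\sigma^c:B_{p,q}^\alpha\to B_{p,q}^\alpha$ (see the remark following the lemma), since the endpoint estimates used there generalise painlessly to $1\leq p\leq\infty$. Both arguments transfer to the operator-valued setting with only cosmetic changes (Schur tests and Plancherel in $L_2(\N)$), as you note.
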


 \begin{proof}
Let $(\varphi_j)_{j\geq 0}$ be the resolution of the unit satisfying \eqref{eq:resolution of unity}. It is straightforward to show that $H_2^\alpha(\R; L_2(\M))$ admits an equivalent norm:
 \begin{equation}\label{eq: H_2 equi norm}
 \| f\|_{H_2^\alpha(\R; L_2(\M))}\approx \big( \sum_{j\geq 0}2^{2j\alpha} \| \varphi_j*f \|_{L_2(\N)}^2\big)^\frac{1}{2}= \| f\|_{B_{2,2}^\alpha(\R; L_2(\M))}.
 \end{equation}
 Let $\sigma_k$ with $k\in \mathbb{N}_0$ be the dyadic decomposition of $\sigma$ given in \eqref{eq: sigma_k}. By the support assumptions of $\widehat{\varphi}$ and $\widehat{\varphi}_0$, we have 
 $$
 T_{\sigma_k}^c(f)= T_{\sigma_k}^c(f_k),
 $$
 where $f_k=(\varphi_{k-1}+\varphi_{k}+\varphi_{k+1})*f$ for $k\geq 1$, and $f_0=(\varphi_{0}+\varphi_{1})*f$. 
Applying  Lemma \ref{lem: dyadic kernel estimate} to $K_k$ with $M=0$, we get 
 \begin{equation*}
 \int_{|s-t |\leq 2^{-k}} \| D_s^\gamma  K_k(s,s-t)\|_\M dt  
 \lesssim \int_{|s-t |\leq 2^{-k}}   2^{k(|\gamma |_1+d)}dt \approx 2^{k|\gamma|_1}.
 \end{equation*}
 If $d+1$ is even, applying  Lemma \ref{lem: dyadic kernel estimate} again to $K_k$ with $2M=d+1$, we get
   \begin{equation*}
 \int_{|s-t | > 2^{-k}}\| D_s^\gamma  K_k(s,s-t)\|_\M dt  
 \lesssim  \int_{|s-t | > 2^{-k}}    2^{k(|\gamma |_1-1)} |s-t |^{-d-1}dt \approx 2^{k|\gamma|_1}\, ;
 \end{equation*}  
 if $d+2$ is even, letting $2M=d+2$ in Lemma \ref{lem: dyadic kernel estimate}, we get the same estimate. Therefore, summing up the above estimates of $\int_{|s-t |\leq 2^{-k}} $ and $\int_{|s-t | > 2^{-k}}$, we obtain
 $$
 \int_{\R}\| D_s^\gamma  K_k(s,s-t)\|_\M dt \lesssim 2^{k |\gamma |_1}.
 $$
 Since the estimate of $\| D_s^\gamma  K_k(s,s-t)\|_\M$ is symmetric in $s$ and $t$, the same proof also shows that 
 \[
 \int_{\R}\| D_s^\gamma  K_k(s,s-t)\|_\M ds \lesssim 2^{k |\gamma |_1}.
 \]
 For any $f\in H_2^\alpha(\R; L_2(\M))$ and $k\in\mathbb{N}_0$, there exists $g_k\in L_2(\N)$ with norm one  such that $ \| D_s^\gamma T_{\sigma_k}^c(f)\|_{L_2(\N)} = \big| \tau \int_{\R} D_s^\gamma T_{\sigma_k}^c(f)(s)g_k^*(s)ds\big| $. Then,
 \begin{equation}\label{eq: S11 2}
\begin{split}
& \| D_s^\gamma T_{\sigma_k}^c(f)\|^2_{L_2(\N)} \\
& = \Big|\tau \int_{\R} D_s^\gamma T_{\sigma_k}^c(f)(s)g_k^*(s)ds\Big| ^2 \\
&  =\Big|\tau \int_{\R} \int_{\R} D_s^\gamma  K_k(s,s-t)f_k(t)dt  \,g_k^*(s)\, ds\Big| ^2 \\
&  \leq \tau  \int_{\R} \| D_s^\gamma  K_k(s,s-t)\|_\M |g_k(s)|^2dt ds \cdot \tau \int_{\R} \| D_s^\gamma  K_k(s,s-t)\|_\M  |f_k(t)|^2ds dt\\
& \lesssim 2^{2 k |\gamma |_1} \cdot \|f_k \|^2_{L_2(\N)}.
\end{split} 
 \end{equation}  
Taking $\gamma=0$, the above calculation implies that 
 \begin{equation}\label{eq: S11 1}
\|  T_{\sigma}^c(f)\|_{L_2(\N)} \leq \sum_{k\geq 0}\|  T_{\sigma_k}^c(f)\|_{L_2(\N)}\lesssim  \sum_{k\geq 0}\|f_k \|_{L_2(\N)}\lesssim \| f \|_{B_{2,1}^0},
 \end{equation}
which implies the boundedness of $T_{\sigma}^c$ from $B_{2,1}^0(\R;L_2(\M))$ to $L_2(\N)$.

On the other hand, if we take 
$$
a_0=\varphi_0, \quad a_j(\xi)=(1-\varphi_0(\xi))\frac{\xi_j}{|\xi |^2},
$$
then we get 
$$
1=a_0(\xi)+\sum_{j= 1}^d  a_j(\xi)\xi_j, \quad \forall\, \xi\in \R.
$$
This identity implies 
$$
1=(a_0(\xi)+\sum_{j= 1}^d a_j(\xi)\xi_j)^l=\sum_{|\gamma|_1\leq l}\sigma_\gamma (\xi) \xi^\gamma, \quad \forall \, l\in \mathbb{N}_0, \,\forall\, \xi\in \R,
$$
where the $\sigma_\gamma (\xi) $'s are symbols in $S^{-|\gamma|_1}_{1,0}\subset S^{-|\gamma|_1}_{1,1}$. The above identity allows us to decompose the term $\varphi_j* T_{\sigma_k}^c(f)$ in the following way:
\begin{equation}\label{eq: S11 4}
\varphi_j* T_{\sigma_k}^c(f)=\sum_{|\gamma|_1\leq l} T_{\sigma_\gamma}^c(\varphi_j* D_s^\gamma T_{\sigma_k}^c(f))=\sum_{|\gamma|_1\leq l} T_{\sigma_\gamma^j}^c(D_s^\gamma T_{\sigma_k}^c(f)),
\end{equation}
where $\sigma_\gamma^j=\sigma_\gamma\widehat{\varphi}_j$. Note that the symbol $\sigma_\gamma^j\in S_{1,0}^{-|\gamma |_1}$ for any $j$, and if $|\gamma |_1< l$, $\sigma_\gamma^j \neq 0$ if and only if $j=0$ and $j=1$.
If $j\leq k+1$, by the Plancherel formula and \eqref{eq: S11 2}, we have 
\begin{equation*}
2^{j \alpha } \| \varphi_j*T_{\sigma_k}^c(f) \|_{L_2(\N)}\lesssim 2^{j \alpha } \| T_{\sigma_k}^c(f) \|_{L_2(\N)} \lesssim 2^{j \alpha } \| f_k \|_{L_2(\N)}  \lesssim 2^{k \alpha}\| f_k \|_{L_2(\N)}.
\end{equation*}
If $j\geq k+2$, adapting the  proof of \eqref{eq: S11 2}  with  $\sigma_\gamma^j$ in place of $\sigma_k$, we deduce that 
\begin{equation}\label{eq: S11 5}
\| T_{\sigma_\gamma^j}^c(D_s^\gamma T_{\sigma_k}^c(f))\|_{L_2(\N)} \leq C_ {\gamma}2^{-j|\gamma|_1}\| D_s^\gamma T_{\sigma_k}^c(f)\|_{L_2(\N)}. 
\end{equation}
 For any $| \gamma |_1 <l$, by the previous observation, $\sigma_\gamma^j=0$. Therefore, estimates \eqref{eq: S11 2}, \eqref{eq: S11 4} and \eqref{eq: S11 5} imply that 
\begin{equation*}
\begin{split}
\| \varphi_j*T_{\sigma_k}^c(f) \|_{L_2(\N)}& = \| \sum_{|\gamma|_1= l} T_{\sigma_\gamma^j}^c(D_s^\gamma T_{\sigma_k}^c(f))\|_{L_2(\N)}\\
& \lesssim \sum_{|\gamma|_1= l}  2^{-jl} \| D_s^\gamma T_{\sigma_k}^c(f)\|_{L_2(\N)}\\
&\lesssim \sum_{|\gamma|_1= l} 2^{(k-j)l }\| f_k\|_{L_2(\N)}.
\end{split}
\end{equation*}
Thus, if we take $l$ to be the smallest integer larger than $\alpha$, we have 
\[
2^{j \alpha } \| \varphi_j*T_{\sigma_k}^c(f) \|_{L_2(\N)} \lesssim 2^{(j-k)(\alpha-l) }2^{k \alpha}\| f_k\|_{L_2(\N)} \leq 2^{k \alpha }\| f_k\|_{L_2(\N)}.
\]
Combining the above estimate for $j\geq k+2$ and that for $j\leq k+1$, we get
\begin{equation*}
\sup_{j\in \mathbb{N}_0} 2^{j \alpha } \| \varphi_j*T_{\sigma_k}^c(f) \|_{L_2(\N)} \lesssim 2^{k \alpha}\| f_k\|_{L_2(\N)},
\end{equation*} 
whence, 
\[
\|T_{\sigma_k}^c(f) \|_{B_{2,\infty}^\alpha}\lesssim   2^{k \alpha}\| f_k\|_{L_2(\N)}.
\]
Then by the triangle inequality, we have
\begin{equation}\label{eq: S11 6}
\|T_{\sigma}^c(f) \|_{B_{2,\infty}^\alpha}\leq \sum_{k\geq 0}\|T_{\sigma_k}^c(f) \|_{B_{2,\infty}^\alpha}\lesssim  \sum_{k\geq 0} 2^{k \alpha}\| f_k\|_{L_2(\N)}\lesssim\| f\|_{B_{2,1}^\alpha}, 
\end{equation}
which shows that $T_{\sigma}^c$ is bounded from $B_{2,1}^\alpha(\R;L_2(\M))$ to $B_{2,\infty}^\alpha(\R; L_2(\M))$.

Applying \eqref{eq: S11 1}, \eqref{eq: S11 6} and the real interpolation \eqref{inter-Besov} with $p=2$, $q=2$ and $\alpha_0=0$, $\alpha_1=\alpha$, we obtain the following boundedness:
$$
\|T_{\sigma}^c(f) \|_{B_{2,2}^\beta}\lesssim \| f\|_{B_{2,2}^\beta}, \quad \forall\, \beta >0.
$$
Finally, \eqref{eq: H_2 equi norm} together with the above inequality yields the desired assertion.
\end{proof}
 
 \begin{rmk}
 Even though it is not the main subject of this paper, the regularity of pseudo-differential operators on operator-valued Besov spaces is already obtained in the above proof. Let us record it specifically in the below. Let $1\leq p, q \leq \infty$.
\begin{enumerate}[$\rm (i)$]
\item If $\sigma\in S_{1,\delta}^0$ for some $0\leq \delta\leq 1$, then $T_\sigma^c$ is bounded from $B_{p,1}^0(\R;L_p(\M))$ to $L_p(\N)$, and bounded on $B_{p,q}^\alpha(\R;L_p(\M))$ for any $\alpha>0$.
\item If $\sigma\in S_{1,\delta}^0$ with $0\leq \delta<1$, then $T_\sigma^c$ is bounded on $B_{p,q}^\alpha(\R;L_p(\M))$ for any $\alpha\in \mathbb{R}$.
\end{enumerate} 
Indeed, the argument in \eqref{eq: S11 2} still works for all $1\leq p \leq \infty$. Then we get the boundedness of $T_\sigma^c$  from $B_{p,1}^0(\R;L_p(\M))$ to $L_p(\N)$ as in \eqref{eq: S11 1}. Likewise, we can deduce the $p$-version of \eqref{eq: S11 6}, i.e. the boundedness from $B_{p,1}^\alpha$ to $B_{p,\infty}^\alpha$ for $\alpha>0$. Thus, for $\alpha>0$, the boundedness of $T_\sigma^c$ on $B_{p,q}^\alpha(\R;L_p(\M))$ is ensured by interpolation. If $\delta<1$, by Proposition \ref{prop: composition} and the lifting property of Besov spaces, we get the boundedness for general $\alpha\in \mathbb{R}$. Moreover, we note that, different from the Triebel-Lizorkin spaces, the above assertions hold for $T_\sigma^r$ as well.

\end{rmk}

Since for $\sigma \in S_{1,1}^0$, $T_\sigma^c$ is  not necessarily bounded on $L_2(\N)$, we cannot expect its boundedness on $L_1\big(\M;L_2^c(\R)\big)$. However, by Lemma \ref{lem: bdd exotic Sobolev}, we are able to prove its boundedness on $ L_1\big(\M; H_2^\alpha(\R)^c\big)$ when $\alpha>0$. Note that the classical Sobolev space $H_2^\alpha(\R)$ is a Hilbert space with the inner product $\langle  f,g\rangle =\int _{\R}J^\alpha f(s)\overline{J^\alpha g(s)}ds$. By the definition of Hilbert-valued $L_p$-spaces, we see that $f\in  L_1\big(\M;H_2^\alpha(\R)^c\big)$ if and only if $J^\alpha f \in  L_1\big(\M;L^c_2(\R)\big)$.

 \begin{lem}\label{lem: S11 L1 bdd}
Let $\sigma\in S_{1,1}^{0}$. Then $T_\sigma^c$ is bounded on $L_1\big(\M;H_2^\alpha(\R)^c\big)$ for any $\alpha > 0$. 
 \end{lem}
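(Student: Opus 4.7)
The plan is to reduce the assertion to an $L_1(\M;L_2^c(\R))$-boundedness statement for an auxiliary operator, and then re-run the duality trick from the proof of Lemma~\ref{lem:L_1 bdd}, feeding in the $L_2(\N)$-bound already supplied by Lemma~\ref{lem: bdd exotic Sobolev}.

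First, since $\|f\|_{L_1(\M;H_2^\alpha(\R)^c)}=\|J^\alpha f\|_{L_1(\M;L_2^c(\R))}$, writing $g=J^\alpha f$ shows that the desired estimate
$\|T_\sigma^c f\|_{L_1(\M;H_2^\alpha(\R)^c)}\lesssim\|f\|_{L_1(\M;H_2^\alpha(\R)^c)}$
is equivalent to the boundedness on $L_1(\M;L_2^c(\R))$ of the conjugated operator
$$T:=J^\alpha\,T_\sigma^c\,J^{-\alpha}.$$
Note that, in contrast to the case $\delta<1$, we cannot rewrite $T$ as a pseudo-differential operator with an $S_{1,1}^0$-symbol by invoking Proposition~\ref{prop: composition} (which requires $\delta<1$), nor can we work with $(T_\sigma^c)^*$ as a pseudo-differential operator via Proposition~\ref{prop: adjoint}, since $S_{1,1}^0$ is not closed under either operation. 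This is why $T$ is treated as an abstract operator.

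Second, by Lemma~\ref{lem: bdd exotic Sobolev}, $T_\sigma^c$ is bounded on $H_2^\alpha(\R;L_2(\M))$; combined with the fact that $J^\alpha$ is an isometric isomorphism from $H_2^\alpha(\R;L_2(\M))$ onto $L_2(\N)$, this yields at once that $T$, and hence its Hilbert-space adjoint $T^*$, is bounded on $L_2(\N)$. By duality between $L_1(\M;L_2^c(\R))$ and $L_\infty(\M;L_2^c(\R))$, it then suffices to show $T^*$ is bounded on $L_\infty(\M;L_2^c(\R))$.

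Third, I copy the computation from the proof of Lemma~\ref{lem:L_1 bdd}. For $h\in L_\infty(\M;L_2^c(\R))$, there exists $u\in L_2(\M)$ with $\|u\|_2=1$ such that
$$\Big\|\big(\textstyle\int_\R|T^*(h)(s)|^2\,ds\big)^{1/2}\Big\|_\M = \Big(\textstyle\int_\R\|T^*(h)(s)u\|_{L_2(\M)}^2\,ds\Big)^{1/2}.$$
The key observation is that $T$ commutes with right multiplication by any element of $\M$: indeed $J^{\pm\alpha}$ are scalar Fourier multipliers (hence commute with the right $\M$-action) and $T_\sigma^c$ acts by left multiplication with $\sigma(s,\xi)\in\M$, hence also commutes. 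Taking adjoints (or arguing directly), $T^*$ shares this property, and a routine density argument extends the commutation to $u\in L_2(\M)$. Therefore $T^*(h)(s)\,u=T^*(hu)(s)$, and invoking the already-established $L_2(\N)$-boundedness of $T^*$ gives
$$\Big(\textstyle\int_\R\|T^*(hu)(s)\|_{L_2(\M)}^2\,ds\Big)^{1/2}\lesssim\|hu\|_{L_2(\N)}\leq\Big\|\big(\textstyle\int_\R|h(s)|^2\,ds\big)^{1/2}\Big\|_\M,$$
which is the required bound.

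The only genuinely delicate point is the justification of the commutation $T^*(hu)=T^*(h)u$ for $u\in L_2(\M)$; everything else is a transparent chain of formal manipulations. The heart of the argument is conceptual rather than computational: Lemma~\ref{lem: bdd exotic Sobolev} has already done all the hard analysis of forbidden symbols at the $L_2$ level with a gain of $\alpha$ derivatives, and the Hilbert-module duality argument of Lemma~\ref{lem:L_1 bdd} transports this to the column $L_1$-setting with no further input from the symbol class.
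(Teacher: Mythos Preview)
Your argument is correct and rests on exactly the same two inputs as the paper's proof: the $L_2$-level bound of Lemma~\ref{lem: bdd exotic Sobolev} and the fact that $T_\sigma^c$ (and hence $J^\alpha T_\sigma^c J^{-\alpha}$) commutes with the right $\M$-action. The only organizational difference is that the paper avoids passing to the adjoint $T^*$: it first shows directly that $T_\sigma^c$ is bounded on $L_\infty(\M;H_2^\alpha(\R)^c)$ via the $u\in L_2(\M)$ trick, and then descends to $L_1$ by the module factorization $f=gh$ with $g\in L_\infty(\M;H_2^\alpha(\R)^c)$, $h\in L_1(\M)$, using $T_\sigma^c(gh)=T_\sigma^c(g)h$. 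Your duality route and the paper's factorization route are standard equivalent ways of passing between the $L_1$ and $L_\infty$ column spaces, so the two proofs are essentially the same; the paper's version is marginally cleaner only in that it sidesteps the small check that $T^*$ inherits the right-module property.
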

 
 \begin{proof}
 Following the argument for lemma \ref{lem:L_1 bdd} by replacing $(T_\sigma^c)^*$ with $J^\alpha T_\sigma^c$, we see that $T_\sigma^c$ is bounded on $L_\infty\big(\M;H_2^\alpha(\R)^c\big)$. 
Let $f\in L_1\big(\M;H_2^\alpha(\R)^c\big)$. Then $f$ admits the decomposition 
$$f=g h,
$$
where $\| h\|_{L_1(\M)}=\| f\|_{L_1\big(\M;H_2^\alpha(\R)^c\big)}$ and $\| g \|_{L_\infty\big(\M;H_2^\alpha(\R)^c\big)}=1$. In particular, if we assume that  $A=(\int_{\R}|J^\alpha f(s)|^2 ds)^\frac{1}{2}$ is invertible, we could take $g=fA^{-1}$, $h=A$.
From this  decomposition, we establish the $L_1\big(\M;H_2^\alpha(\R)^c\big)$-norm of $T_\sigma^c(f)$ as follows:
\begin{equation*}
\begin{split}
\| T_\sigma^c(f)\| _{L_1\big(\M;H_2^\alpha(\R)^c\big)} & =\| T_\sigma^c(g)h\| _{L_1\big(\M;H_2^\alpha(\R)^c\big)} \\
& \leq \| T_\sigma^c(g)\| _{L_\infty\big(\M;H_2^\alpha(\R)^c\big)} \| h \|_{L_1(\M)}\\
& \lesssim \| g \| _{L_\infty\big(\M;H_2^\alpha(\R)^c\big)} \| h \|_{L_1(\M)}\\
&= \| f\|_{L_1\big(\M;H_2^\alpha(\R)^c\big)},
\end{split}
\end{equation*}
which implies  that $T_\sigma^c$ is bounded on $L_1\big(\M;H_2^\alpha(\R)^c\big)$.
 \end{proof}

Based on the previous lemma and the atomic decomposition in Theorem \ref{thm: atomic decop T_L}, we are able to study the boundedness of pseudo-differential operators with forbidden symbols on the operator-valued Triebel-Lizorkin spaces $F_{1}^{\alpha,c}(\mathbb{R}^d,\mathcal{M})$.

 \begin{thm}\label{thm:bddforbidden class}
Let $\sigma\in S_{1,1}^{0}$ and $\alpha > 0$. Then  $T^c_{\sigma}$ is bounded  on $F_{1}^{\alpha,c}(\mathbb{R}^d,\mathcal{M})$.
 \end{thm}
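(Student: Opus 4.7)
The plan is to mirror the proof of Theorem \ref{thm:bdd} via the smooth atomic decomposition (Theorem \ref{thm: atomic decop T_L}), bounding $T_\sigma^c$ on each atom uniformly. Two devices from the regular-symbol case break down for $\delta=1$: the adjoint trick (Proposition \ref{prop: adjoint}) and the lifting from $\alpha=0$ (Proposition \ref{prop: composition}) both require $\delta<1$, so the duality argument of Step~1 and the lifting argument of Step~3 in the proof of Theorem \ref{thm:bdd} are both unavailable. The analytic substitute is Lemma \ref{lem: S11 L1 bdd}, which gives boundedness of $T_\sigma^c$ on $L_1(\M;H_2^\alpha(\R)^c)$ when $\alpha>0$; this is where the sign assumption on $\alpha$ is essential. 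For $(\alpha,1)$-atoms $b$, the bound $\|T_\sigma^c b\|_{F_1^{\alpha,c}}\lesssim 1$ is immediate from Corollary \ref{lem:moledule m norm}, whose proof (via Lemma \ref{lem:moledule 2}) only invokes the kernel estimates of Lemma \ref{Lem T K}, valid for $0\leq\delta\leq 1$.

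The real work is on an $(\alpha,Q_{k,m})$-atom $g$ with $|Q_{k,m}|<1$. Writing $Q=Q_{k,m}$, centered at the origin without loss of generality, I would introduce a three-piece smooth partition $1=h_1+h_2+h_3$ with $h_1\in C_c^\infty(\R)$ equal to $1$ on $4Q$ and supported in $8Q$; $h_3\in C^\infty(\R)$ supported in $(2^{-k}m+6Q_{0,0})^c$ and equal to $1$ outside $2^{-k}m+8Q_{0,0}$; and $h_2=1-h_1-h_3$ supported in the fixed annulus $(2^{-k}m+8Q_{0,0})\setminus 4Q$. Setting $\sigma^i=h_i\sigma$, each piece stays in $S_{1,1}^0$ with uniform seminorms. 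The far piece $T^c_{\sigma^3}g$ is controlled by Corollary \ref{cor: moledule norm}, whose support hypothesis is exactly that imposed on $h_3$. The annulus piece $T^c_{\sigma^2}g$ is handled as in Step~2 of the proof of Theorem \ref{thm:bdd}: since $\supp_s\sigma^2$ sits at positive distance from $2Q$ and the kernel estimates of Lemma \ref{Lem T K} still hold for $\delta=1$, the high-order moment cancellation that $g$ inherits from its subatoms (after choosing $L$ in Theorem \ref{thm: atomic decop T_L} large enough) allows a Taylor subtraction on the kernel, yielding a pointwise bound on $T^c_{\sigma^2}g$ together with its derivatives and a decomposition into a finite bounded sum of $(\alpha,Q_{0,n})$-atoms at unit scale.

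The near piece $T^c_{\sigma^1}g$ is the crux. Since $\supp_s\sigma^1\subset 8Q$, this function is spatially supported in $8Q$, and Lemma \ref{lem: S11 L1 bdd} applied to $\sigma^1$ gives the key size estimate
\[
\tau\Bigl(\int_\R |J^\alpha T^c_{\sigma^1}g(s)|^2\,ds\Bigr)^{\frac12}\lesssim\tau\Bigl(\int_\R |J^\alpha g(s)|^2\,ds\Bigr)^{\frac12}\leq |Q|^{-\frac12},
\]
which matches exactly the size axiom in Definition \ref{def:smooth T atom}(3) for an $(\alpha,Q)$-atom. To conclude $\|T^c_{\sigma^1}g\|_{F_1^{\alpha,c}}\lesssim 1$ one must in addition exhibit a subatomic expansion of $T^c_{\sigma^1}g$ satisfying the $\ell^2$-control of the same definition. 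I would construct it by expanding $g=\sum d_{\mu,l}a_{\mu,l}$ into its own subatoms, applying the molecule estimates of Lemma \ref{lem:moledule} (which hold for $\delta=1$, since their proof only uses Lemma \ref{Lem T K}) to each image $T^c_{\sigma^1}a_{\mu,l}$ to produce a smooth molecule at scale $2^{-\mu}$ centered near $2^{-\mu}l$, and then regrouping these molecules through a smooth partition of unity into genuine subatoms at matching dyadic scales, inheriting the $\ell^2$-coefficient bound from $(d_{\mu,l})$. This reassembly is the principal obstacle: coupling the macroscopic $L_1(\M;H_2^\alpha(\R)^c)$-estimate with microscopic subatomic information, without the adjoint/mean-subtraction trick that was available in the regular-symbol case, is more delicate than the lemmas of Section~\ref{section-pdo-atom} deliver directly, and will require careful bookkeeping of the truncation-plus-correction terms needed to turn molecules into compactly supported subatoms with the prescribed vanishing moments.
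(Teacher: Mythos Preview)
Your overall strategy---atomic decomposition, Corollary \ref{lem:moledule m norm} for $(\alpha,1)$-atoms, Corollary \ref{cor: moledule norm} for the far piece, and Lemma \ref{lem: S11 L1 bdd} as the replacement for the unavailable adjoint trick---is on the right track, and you correctly identify the subatom reassembly as the heart of the matter. However, there is a genuine gap in your handling of small cubes $Q=Q_{k,m}$ with $k\geq 1$.

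Your three-piece cutoff $h_1+h_2+h_3=1$ is taken in the $s$-variable with $h_1$ supported in $8Q$. When $l(Q)=2^{-k}$ is small, the derivatives of $h_1$ satisfy $|D_s^\gamma h_1|\lesssim 2^{k|\gamma|_1}$, and hence $\sigma^1=h_1\sigma$ is \emph{not} in $S_{1,1}^0$ with seminorms uniform in $k$: one only has
\[
\|D_s^\gamma\sigma^1(s,\xi)\|_\M\;\lesssim\;\sum_{\gamma_1+\gamma_2=\gamma}2^{k|\gamma_1|_1}(1+|\xi|)^{|\gamma_2|_1},
\]
which is not dominated by $(1+|\xi|)^{|\gamma|_1}$ uniformly. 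Your claim that ``each piece stays in $S_{1,1}^0$ with uniform seminorms'' is therefore false, and the application of Lemma \ref{lem: S11 L1 bdd} to $\sigma^1$ yields a constant blowing up with $k$. The same scale-$2^{-k}$ derivatives of $h_2=1-h_1-h_3$ infect the annulus piece near its inner boundary.

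The paper circumvents this by splitting in $\xi$ rather than in $s$: one writes $\sigma=\sigma_1+\sigma_2$ with $\sigma_2=h_2(\xi)\sigma$ having compact $\xi$-support at a \emph{fixed} unit scale. This $\sigma_2$ actually lies in $S_{1,9/10}^0$ (the forbidden growth $(1+|\xi|)^{|\gamma|_1}$ is bounded on compact $\xi$-sets) and is handled by Theorem \ref{thm:bdd} directly. The remaining $\sigma_1$, with $\xi$-support bounded away from the origin, enjoys a dilation invariance: $\sigma_{1,k}(s,\xi)=\sigma_1(2^{-k}s,2^k\xi)$ satisfies uniform $S_{1,1}^0$-type bounds. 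This reduces a $Q_{k,0}$-atom $g$ to the unit-cube case for $h=2^{k(\alpha-d)}g(2^{-k}\cdot)$, where the $s$-cutoffs are now at unit scale and Lemma \ref{lem: S11 L1 bdd} applies with uniform constants. This $\xi$-split plus dilation is the device you are missing.

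One further remark: your closing worry about vanishing moments in the subatom reassembly is unfounded. Since $\alpha>0$, Definition \ref{def:smooth T atom} allows $L=-1$, so no moment cancellation is required of the subatoms; the reassembly of molecules (from Lemma \ref{lem:moledule}) into subatoms via a partition of unity at scale $2^{-\mu}$ needs only the support and size conditions, and goes through as in the paper's Step~1.
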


\begin{proof}
Let $f\in F_{1}^{\alpha,c}(\R,\M)$. We fix $K,L$ to be two integers such that $K>\alpha+d$ and $L>d$. By the atomic decomposition in Theorem \ref{thm: atomic decop T_L}, $f$ can be written as
$$
f=\sum_{j=1}^\infty (\mu_j b_j+\lambda_jg_j),
$$
where the $b_j$'s are $(\alpha,1)$-atoms and the $g_j$'s are $(\alpha,Q)$-atoms, $\mu_j$ and $\lambda_j$ are complex numbers such that
$$
\sum_{j=1}^\infty (|\mu _j|+|\lambda_j|)\approx \|f\|_{F_1^{\alpha,c}}.
$$
In order to prove the assertion, by the above atomic decomposition,  it suffices to prove that 
$$
 \|  T^c_{\sigma} b \|  _{F_{1}^{\alpha,c}}\lesssim 1 \quad \text{and}\quad  \|  T^c_{\sigma} g \|  _{F_{1}^{\alpha,c}}\lesssim 1 ,
$$
for any $(\alpha,1)$-atom $b$ and $(\alpha,Q)$-atom $g$. We have shown in Corollary \ref{lem:moledule m norm}  that 
\begin{equation}\label{eq: norm of b}
  \|  T^c_{\sigma} b \|  _{F_{1}^{\alpha,c}}\lesssim 1.
\end{equation}
Thus it remains to consider $T^c_\sigma g$. This is the main part of the proof which will be divided into several steps for clarity.

\emph{Step 1.} 
By translation, we may assume that the supporting cube $Q$ of the atom $g$ is centered at the origin. We begin with a split of the symbol $\sigma$: 
Let $h_1$, $h_2$ be two nonnegative  infinitely differentiable functions on $\mathbb{R}^d$ such that $\supp h_1\subset (Q )^c$, $\supp h_2\subset 2Q $ and 
$$
1=h_1(\xi)+h_2(\xi), \quad \forall \, \xi \in \mathbb{R}^d.
$$
For any $(s,\xi)\in \mathbb{R}^d\times \R$, we write 
$$\sigma(s,\xi)=h_1(\xi)\sigma(s,\xi)+h_2(\xi)\sigma(s,\xi) \stackrel{\mathrm{def}}{=}\sigma_1(s,\xi)+\sigma_2(s,\xi).$$ 
It is clear that $\sigma_1$ and $\sigma_2$ are still two symbols in $S_{1,1}^0$, and
\begin{equation}\label{split-Tg}
\|  T^c_{\sigma}g\|  _{F_{1}^{\alpha,c}}\leq \|  T^c_{\sigma_1}g\|  _{F_{1}^{\alpha,c}}+\|  T^c_{\sigma_2}g\|  _{F_{1}^{\alpha,c}}.
\end{equation}

First, we consider the case where the cube $Q$ is of side length one, i.e. $Q=Q_{0,0}$, and deal with the term $\|  T^c_{\sigma_1}g\|  _{F_{1}^{\alpha,c}}$ in the above split.
Let $(\mathcal{X}_j)_{j\in\mathbb{Z}^d}$ be the resolution of the unit defined in \eqref{eq: unit resolution} and $\widetilde{\mathcal{X}}_j= \mathcal{X}_j(2 \cdot)$ for $j\in\mathbb{Z}^d$.  We write   
\begin{equation}
\begin{split}
T^c_{\sigma_1}g & = \sum_{j\in 8Q_{0,0}}T^c_{\sigma_1^j} g+ \sum_{j\notin 8Q_{0,0}}T^c_{\sigma_1^j} g \\
&  \stackrel{\mathrm{def}}{=} G_1+H_1,\label{eq:G H}
\end{split}
\end{equation} 
where $\sigma_1^j(s,\xi)=\sigma_1(s,\xi)\widetilde{\mathcal{X}}_j(s)$.

 We claim that for every $j\in\mathbb{Z}^d $, $T^c_{\sigma_1^j} g$ is the bounded multiple of an $(\alpha,Q_{0,\frac j 2})$-atom (with the convention $Q_{0,\frac j 2}= \frac{ j}{2} +Q_{0,0}$). No loss of generality, we prove the claim just for $j=0$. Applying Lemma \ref{lem: S11 L1 bdd} to the symbol $\sigma_1^0$, we get 
$$
\tau  \big(\int_{\R}  |  J^\alpha T^c_{\sigma_1^0} g(s) |^{2}ds \big)^{\frac{1}{2}}\lesssim \tau \big(\int_{\R} | J^\alpha  g(s)|^2ds\big)^\frac{1}{2} \lesssim |Q_{0,0}|^{-\frac{1}{2}}.
$$
Thus, in order to prove the claim, it remains to show that $T^c_{\sigma_1^0} g$ can be written as the linear combination of subatoms and the coefficients satisfy a certain condition. By Definition \ref{def:smooth T atom}, $g$ admits the following representation:
\begin{equation}
g=\sum_{(\mu,l)\leq (0,0)}d_{\mu, l}a_{\mu, l},\label{eq:g decomp}
\end{equation}
 where the $a_{\mu, l}$'s are $(\alpha ,Q_{\mu, l})$-subatoms and the coefficients  $d_{\mu, l}$'s are
complex numbers satisfying $ \sum_{(\mu,l)\leq (0,0)}  |d_{\mu, l} |^{2} \leq1$.
Then we have
$$
T^c_{\sigma_1^0}g=\sum_{  (\mu,l )\leq (0,0)}d_{\mu, l}T^c_{\sigma_1^0}a_{\mu, l}.
$$
Given $\mu\in \mathbb{N}_0$, let $(\mathcal{X}_{\mu, m})_{m\in \mathbb{Z}^{d} }$ be a sequence of infinitely differentiable functions on $\R$ such that 
\begin{equation} \label{eq: resolution of unit length mu}
1=\sum_{m\in\mathbb{Z}^{d}}\mathcal{X}_{\mu, m}(s),\quad\forall\, s\in\mathbb{R}^{d},
\end{equation}
and each $\mathcal{X}_{\mu, 0}$ is nonnegative,
supported in $2Q_{\mu, 0}$ and $\mathcal{X}_{\mu, m}(s)=\mathcal{X}_{\mu,0}(s-2^{-\mu}m)$. It is the $2^{-\mu}$-dilated version of the resolution of the unit in \eqref{eq: unit resolution}.
We decompose $T^c_{\sigma_1^0}g$ in the following way:
\begin{equation} \label{eq:decomp of image of g}
T^c_{\sigma_1^0}g =  \sum_{\mu=0}^{\infty}{\sum_{m}} \mathcal{X}_{\mu, m}\sum_{l}d_{\mu, l}T^c_{\sigma_1^0}a_{\mu, l}.
\end{equation}
Observe that the only $m$'s that contribute to the above sum $\sum_m$ are those $m\in\mathbb{Z}^{d}$ such
that $2Q_{\mu, m}\cap Q_{0,0}\neq \emptyset$, so $Q_{\mu, m}\subset 2Q_{0,0}$.
Thus, we obtain  the decomposition
\begin{equation}\label{eq:decomp G}
T^c_{\sigma_1^0}g  =\sum_{(\mu,m )\leq (0,0)}D_{\mu, m}G_{\mu, m} ,
\end{equation}
where
\begin{equation*}
\begin{split}
D_{\mu, m}  &=  \big(\sum_{l}  |d_{\mu, l} |^{2}  (1+  | m-l | )^{-  (d+1 )} \big)^{\frac{1}{2}},\\
G_{\mu, m}  &=  \frac{1}{D_{\mu, m}}\mathcal{X}_{\mu, m}\sum_{l}d_{\mu, l}T^c_{\sigma_1^0}a_{\mu, l}.
\end{split}
\end{equation*}
It is evident that
$$
\big(\sum_{(\mu,m)\leq (0,0)}  |D_{\mu, m} |^{2}\big)^{\frac{1}{2}}\lesssim \big(\sum_{(\mu,l)\leq (0,0)}  |d_{\mu, l} |^{2}\big)^{\frac{1}{2}}\leq1.
$$
Now we show that the $G_{\mu, m}$'s are bounded multiple of $(\alpha,Q_{\mu, m})$-subatoms. Firstly, we have $\supp G_{\mu, m} \subset \supp \mathcal{X}_{\mu, m} \subset 2Q_{\mu, m}$. Secondly, by the Cauchy-Schwarz inequality,
\begin{equation}\label{eq:estimate atom}
\begin{split}
&  \tau  \big(\int_{2Q_{\mu, m}}  |\sum_{l}d_{\mu, l}T^c_{\sigma_1^0}a_{\mu, l}(s) |^{2}ds \big)^{\frac{1}{2}} \\
&  \lesssim \big(\sum_{l}  |d_{\mu, l} |^{2}  (1+  | m-l  | )^{-  (d+1 )} \big)^{\frac{1}{2}}\\
&  \;\;\;\; \cdot \sum_{l}  (1+ | m-l | )^{\frac{1-M}{2}}\tau \big(\int_{2Q_{\mu, m}}(1+2^\mu (s-2^{-\mu}l))^{d+M} |T^c_{\sigma_1^0}a_{\mu, l}(s) |^{2}ds \big)^{\frac{1}{2}}.
\end{split}
\end{equation}
If we take $M=2L+1$, since $L>d$, we have $\frac{1-M}{2}<-d$. Applying Lemma \ref{lem:moledule}, we get
\begin{equation*}
\tau  (\int_{\R}  |G_{\mu, m}(s) |^{2}ds )^{\frac{1}{2}}  \lesssim  \sum_{l}  (1+ | m-l | )^{\frac{1-M}{2}} |Q_{\mu,l}|^\frac{\alpha}{d}\lesssim  |Q_{\mu,m}|^\frac{\alpha}{d}.
\end{equation*}
Similarly, the derivative estimates in Lemma \ref{lem:moledule} ensure that
$$
\tau  (\int  |D^{\gamma}G_{\mu, m}(s) |^{2}ds )^{\frac{1}{2}}\lesssim  |Q_{\mu, m} |^{\frac{\alpha}{d}-\frac{  |\gamma |_1}{d}},   \quad \forall\, |\gamma |_1\leq [\alpha]+1.
$$
Since $\alpha>0$, no moment cancellation for subatoms is required. Thus,
we have proved that the $G_{\mu, m}$'s are bounded multiple of $(\alpha,Q_{\mu, m})$-subatoms, then the claim is proved. Therefore, $G_1$ in \eqref{eq:G H} is the finite sum of $(\alpha, Q_{0,j})$-atoms, which yields $\|  G_1 \|  _{F_{1}^{\alpha,c}}\lesssim 1$ by Theorem \ref{thm: atomic decop T_L}.

The term $H_1$ in \eqref{eq:G H} is much easier to handle.  Observe that $H_1$ corresponds to the symbol $\sigma(s,\xi)\sum_{j\notin 8Q_{0,0}}\widetilde{\mathcal{X}}(s)$, whose $s$-support is in $(6Q_{0,0})^c$. Thus, we apply Corollary  \ref{cor: moledule norm} directly to get that 
$$
\| H_1 \|_{F_1^{\alpha,c}}\lesssim 1. 
$$

\emph{Step 2.} Let us  consider now the case where the supporting cube $Q$ of $g$ has side length less than one.  As above, we may still assume that $Q$ is centered at the origin. Let $g$ be an $(\alpha,Q_{k,0})$-atom  with $k\in\mathbb{N}$.
Then $g$ is given by
$$
g=\sum_{(\mu,l)\leq (k,0)}d_{\mu, l}a_{\mu, l} \quad \text{with} \; \sum_{(\mu,l)}  |d_{\mu, l} |^{2} \leq  |Q_{k,0} |^{-1}=2^{kd}.
$$
We normalize $g$ as
\begin{equation*}
\begin{split}
h  & =2^{k(\alpha-d)}g(2^{-k}\cdot)\\
 &  =  \sum_{(\mu,l)\leq (k,0)}2^{-\frac{kd}{2}}d_{\mu, l}2^{k(\alpha-\frac{d}{2})}a_{\mu, l}(2^{-k}\cdot)\\
&  =  \sum_{(\mu,l)\leq (k,0)}\widetilde{d}_{\mu, l}\widetilde{a}_{\mu, l},
\end{split}
\end{equation*}
where $\widetilde{a}_{\mu, l}=2^{k(\alpha-\frac{d}{2})}a_{\mu, l}(2^{-k}\cdot)$
and $\widetilde{d}_{\mu, l}=2^{-\frac{kd}{2}}d_{\mu, l}$. Then it is easy to see that  each $\widetilde{a}_{\mu, l}$
is an $(\alpha,Q_{\mu-k,l})$-subatom and $h$ is an $(\alpha,Q_{0,0})$-atom.
Define $\sigma_{1,k}  (s,\xi )=\sigma_1(2^{-k}s,2^{k}\xi)$, then we have
\begin{equation}\label{dilation-g-h}
\begin{split}
T^c_{\sigma_1}g(s)  &= \int_{\R} \sigma_1(s,\xi)\widehat{g}(\xi)e^{2\pi {\rm i} s \cdot \xi}d\xi\\
&=   2^{-k\alpha}\int_{\R} \sigma_1(s,\xi)\widehat{h}(2^{-k}\xi)e^{2\pi {\rm i} s\cdot  \xi}d\xi\\
&=  2^{k(d-\alpha)}\int_{\R} \sigma_{1,k}  (2^{k}s,\xi )\widehat{h}(\xi)e^{2\pi {\rm i} \,2^{k}s\cdot \xi}d\xi\\
& =   2^{k(d-\alpha)}T^c_{\sigma_{1,k} }h(2^{k}s).
\end{split}
\end{equation}
 Since the $\xi$-support of $\sigma_1$ is away from the origin, we have
 $$
 \|D_s^\gamma D_\xi^\beta \sigma_{1,k} (s,\xi )\|_{\M} \leq C_{\gamma,\beta}|\xi |^{|\gamma|_1 -|\beta|_1}
 \approx C_{\gamma,\beta}(1+|\xi |)^{|\gamma|_1-|\beta|_1}, \quad \forall\, k\in \mathbb{N}.
 $$
 Thus, $\sigma_{1,k} $ is still a symbol in the class $S_{1,1}^{0}$.
Then, applying the result for $(\alpha,Q_{0,0})$-atoms obtained in Step 1 to
the symbol $\sigma_{1,k} $, we  get $\|  T^c_{\sigma_{1,k} }h \| _{F_1^{\alpha,c}}\lesssim 1$. 
In order to return back to the $F_1^{\alpha,c}$-norm of $T_{\sigma_1}^c g$, by \eqref{dilation-g-h}, we need a dilation argument. Since $\alpha>0$, we can invoke the characterization of $F_1^{\alpha,c}$-norm in \cite[Corollary~3.10]{XX18}:
 $$\| f\|_{F_{1}^{\alpha,c}} \approx \|  f\| _{1}+\Big\|  (\int _0^\infty\e^{-2\alpha}|  \varphi_\e*f| ^2\frac{d\e}{\e})^\frac{1}{2}\Big\| _{1},$$
 where $\varphi_\e=\F^{-1}(\varphi(\e\cdot))$. For $\lambda>0$, we have $\|  f(\lambda\cdot)\| _{1}=\lambda^{-d}\|  f\| _{1}$, and 
 $$\Big\|  (\int _0^\infty\e^{-2\alpha}|  \varphi_\e*f(\lambda\cdot)| ^2\frac{d\e}{\e})^\frac{1}{2}\Big\| _{1}=\lambda^{\alpha -d}\,\Big\|  (\int _0^\infty\e^{-2\alpha}|  \varphi_\e*f| ^2\frac{d\e}{\e})^\frac{1}{2}\Big\| _{1}$$ 
since  $( \varphi _\e*f(\lambda\cdot))(s)= \varphi _{\lambda\e}*f(\lambda s)$. Taking $\lambda= 2^k$, we deduce
\begin{equation*}
\begin{split}
\| T^c_{\sigma_1}g \|_{F_{1}^{\alpha,c}} &\approx \|  T^c_{\sigma_1}g \| _{1}+\Big\|  (\int _0^\infty\e^{-2\alpha}|  \varphi_\e*T^c_{\sigma_1}g | ^2\frac{d\e}{\e})^\frac{1}{2}\Big\| _{1} \\
&=2^{k(d-\alpha)} \Big(\|  T^c_{\sigma_{1,k} }h(2^{k}\cdot) \| _{1}+\Big\|  (\int _0^\infty\e^{-2\alpha}|  \varphi_\e*T^c_{\sigma_{1,k} }h(2^{k}\cdot) | ^2\frac{d\e}{\e})^\frac{1}{2}\Big\| _{1} \Big)\\
&=2^{k(d-\alpha)} \Big(  2^{-kd}\|  T^c_{\sigma_{1,k} }h \| _{1}+2^{k(\alpha-d)}\Big\|  (\int _0^\infty\e^{-2\alpha}|  \varphi_\e*T^c_{\sigma_{1,k} }h | ^2\frac{d\e}{\e})^\frac{1}{2}\Big\| _{1} \Big)\\
&\lesssim \|  T^c_{\sigma_{1,k} }h \| _{F_1^{\alpha,c}}.
\end{split}
\end{equation*}
This ensures
$$
\|  T^c_{\sigma_1}g \| _{F_1^{\alpha,c}} \lesssim \|  T^c_{\sigma_{1,k} }h \| _{F_1^{\alpha,c}}\lesssim 1.
$$

\emph{Step 3.} It remains to deal with the term with symbol $\sigma _2$ in \eqref{split-Tg}. Note that $\sigma_2= h_2(\xi) \sigma(s,\xi)$ with $\sigma\in S_{1,1}^0$ and $\supp  h_2 \in 2 Q $. Then for $\delta<1$, say $\delta =\frac{9}{10}$, we have $\sigma_2\in S_{1,\delta}^0$. Indeed, by definition, we have, for every $s\in \mathbb{R}$,
\begin{equation*}
\begin{split}
\| D_s^{\gamma } D_{\xi}^\beta  \sigma_2(s,\xi)\|_\M & \lesssim \sum_{\beta_1+ \beta_2  =\beta}  \|D_s^\gamma  D_\xi ^{\beta_1}  \sigma(s, \xi )  \cdot D^{\beta_2} h_2(\xi) \|_\M  \\
& \leq   \sum_{\beta_1+ \beta_2  =\beta}  C_{\gamma, \beta_1} (1+|\xi | )^{|\gamma   |_1  -  |\beta_1 | _1 } \cdot |D^{\beta_2} h_2(\xi) | .
\end{split}
\end{equation*}
But since $h_2$ is an infinitely differentiable function with support $2Q $, it is clear that for $\xi  \in 2Q $,
$$ (1+|\xi | )^{|\gamma   |_1  -  |\beta_1 | _1 } \leq C_\gamma  (1+|\xi | )^{\frac{9}{10}|\gamma   |_1  -  |\beta_1 | _1 },\quad\mbox{and} \quad     | D^{\beta_2} h_2(\xi) | \leq C_{\beta_2} (1+|\xi| ) ^{-|\beta_2|_1} .$$
Putting these two inequalities into the estimate of $\| D_s^{\gamma } D_{\xi}^\beta  \sigma_2(s,\xi)\|_\M $, we obtain
$$\| D_s^{\gamma } D_{\xi}^\beta  \sigma_2(s,\xi)\|_\M  \leq C_{\gamma, \beta} (1+|\xi|) ^{\frac{9}{10}   |\gamma|_1    - |\beta|_1},$$
which yields $\sigma_2\in S_{1,\frac{9}{10}}^0$. Therefore, it follows from Theorem \ref{thm:bdd} that $\|T_{\sigma_2}^c g\|_{F_1^{\alpha,c }}\lesssim \| g\|_{F_1^{\alpha,c }}$ for $g\in F_1^{\alpha,c }(\R,\M)$. Combining this with the estimates in the first two steps, we complete the proof of the theorem.
\end{proof}

If $\sigma\in S_{1,1}^0$, it is not true in general that $(T_\sigma^c)^*$ corresponds to a symbol in the class $S_{1,1}^0$. However, if we assume additionally this last condition, duality and interpolation arguments will give the following boundedness of $T_\sigma^c$:
\begin{thm}\label{thm: bdd forbidden p>1}
Let $1< p<\infty$ and $\sigma\in S_{1,1}^{0}$, $\alpha \in \mathbb{R}$. If  $(T_\sigma^c)^*$ admits a symbol in the class $S_{1,1}^0$, then  $T^c_{\sigma}$ is bounded  on $F_{p}^{\alpha,c}(\mathbb{R}^d,\mathcal{M})$.
\end{thm}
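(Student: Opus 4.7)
The plan is to bootstrap from Theorem \ref{thm:bddforbidden class} (which handles the $p=1$, $\alpha>0$ case) to all $\alpha\in\mathbb{R}$ and $1<p<\infty$ via duality plus the complex interpolation
$$
\big(F_\infty^{\alpha_0,c}(\R,\M),\, F_1^{\alpha_1,c}(\R,\M)\big)_{\frac{1}{p}} = F_p^{\alpha,c}(\R,\M), \qquad \alpha = \bigl(1-\tfrac{1}{p}\bigr)\alpha_0 + \tfrac{\alpha_1}{p},
$$
recalled in section \ref{prelimi}. The auxiliary hypothesis that $(T_\sigma^c)^*=T_{\widetilde{\sigma}}^c$ for some $\widetilde{\sigma}\in S_{1,1}^0$ is exactly what compensates for the fact that, unlike the regular case $\delta<1$ treated in Proposition \ref{prop: adjoint}, the class $S_{1,1}^0$ is not stable under taking adjoints.

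First I would obtain the $F_1$-endpoint: for any $\alpha_1>0$, Theorem \ref{thm:bddforbidden class} applied to $\sigma$ itself gives that $T_\sigma^c$ is bounded on $F_1^{\alpha_1,c}(\R,\M)$. Next, I would obtain the $F_\infty$-endpoint by duality: applying Theorem \ref{thm:bddforbidden class} to the symbol $\widetilde{\sigma}\in S_{1,1}^0$ shows that $T_{\widetilde{\sigma}}^c=(T_\sigma^c)^*$ is bounded on $F_1^{\beta,c}(\R,\M)$ for every $\beta>0$. Since the dual of $F_1^{\beta,c}(\R,\M)$ is $F_\infty^{-\beta,c}(\R,\M)$, the dual operator $T_\sigma^c$ is bounded on $F_\infty^{\alpha_0,c}(\R,\M)$ for every $\alpha_0<0$ (take $\alpha_0=-\beta$).

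Finally I would invoke complex interpolation. Given $\alpha\in\mathbb{R}$ and $1<p<\infty$, I need to exhibit $\alpha_0<0$ and $\alpha_1>0$ with $\alpha=(1-\tfrac{1}{p})\alpha_0+\tfrac{\alpha_1}{p}$; this is always possible, e.g.\ pick any $\alpha_1>\max\{p\alpha,0\}$ and then set $\alpha_0=\frac{p\alpha-\alpha_1}{p-1}$, which is automatically negative. Interpolating the boundedness on $F_\infty^{\alpha_0,c}$ and on $F_1^{\alpha_1,c}$ then yields the boundedness of $T_\sigma^c$ on $F_p^{\alpha,c}(\R,\M)$, as required.

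No step of the argument is genuinely hard once Theorem \ref{thm:bddforbidden class} and the duality/interpolation scale for $F_p^{\alpha,c}$ are in hand; the only subtlety is to notice that the hypothesis on the adjoint is used solely to provide the $F_\infty$-endpoint, and that the interpolation window $\alpha_0<0<\alpha_1$ is wide enough to reach every real $\alpha$ for any $1<p<\infty$.
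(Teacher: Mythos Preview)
Your argument is correct and matches the paper's own approach: the paper does not give a detailed proof of this theorem but simply states that ``duality and interpolation arguments will give the following boundedness of $T_\sigma^c$,'' which is precisely the route you have written out (Theorem \ref{thm:bddforbidden class} at $p=1$ for both $\sigma$ and $\widetilde{\sigma}$, duality to reach $F_\infty^{\alpha_0,c}$ with $\alpha_0<0$, then complex interpolation). Your explicit choice of $\alpha_0<0<\alpha_1$ fills in the only detail the paper leaves implicit.
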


A similar argument as in the proof of Corollary \ref{cor:bdd} gives the following results concerning the symbols in $S_{1,1}^{n}$ with $n\in \mathbb{R}$.

\begin{cor}
Let $n\in \mathbb{R}$, $\sigma\in S_{1,1}^{n}$ and $\alpha>0$. If $\alpha > n$, then $T^c_{\sigma}$ is bounded  from $F_{1}^{\alpha,c}(\R,\M)$ to $F_{1}^{\alpha-n,c}(\R,\M)$.
\end{cor}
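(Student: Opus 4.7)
The plan is to reduce to Theorem \ref{thm:bddforbidden class} by factoring out a Bessel potential, exactly as in the proof of Corollary \ref{cor:bdd}. Set
\[
\sigma'(s,\xi) = \sigma(s,\xi)\,(1+|\xi|^2)^{-n/2}.
\]
First I would verify that $\sigma' \in S_{1,1}^0$. By the Leibniz rule,
\[
D_s^{\gamma}D_\xi^{\beta}\sigma'(s,\xi)
=\sum_{\beta_1+\beta_2=\beta}\binom{\beta}{\beta_1}\,
D_s^{\gamma}D_\xi^{\beta_1}\sigma(s,\xi)\cdot
D_\xi^{\beta_2}(1+|\xi|^2)^{-n/2}.
\]
Using $\sigma\in S_{1,1}^n$ for the first factor and the standard scalar estimate $|D_\xi^{\beta_2}(1+|\xi|^2)^{-n/2}|\lesssim (1+|\xi|)^{-n-|\beta_2|_1}$ for the second, each summand is bounded in $\|\cdot\|_\M$ by a constant times $(1+|\xi|)^{n+|\gamma|_1-|\beta_1|_1-n-|\beta_2|_1}=(1+|\xi|)^{|\gamma|_1-|\beta|_1}$, confirming that $\sigma'\in S_{1,1}^0$.

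Next I would observe the operator identity $T_\sigma^c = T_{\sigma'}^c\circ J^n$. This does not require the composition formula of Proposition \ref{prop: composition} (which anyway excludes $\delta=1$), because $J^n$ is a scalar Fourier multiplier with symbol $(1+|\xi|^2)^{n/2}$; a direct computation from the defining formula \eqref{eq: def of pd} gives, for any $f$ in the Schwartz class,
\[
T_{\sigma'}^c(J^n f)(s)=\int_{\R}\sigma'(s,\xi)(1+|\xi|^2)^{n/2}\widehat{f}(\xi)e^{2\pi{\rm i}s\cdot\xi}d\xi
=T_\sigma^c f(s).
\]

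Finally, the lifting property stated in Section \ref{prelimi} ensures that $J^n$ is an isomorphism from $F_1^{\alpha,c}(\R,\M)$ onto $F_1^{\alpha-n,c}(\R,\M)$. Since by hypothesis $\alpha-n>0$, Theorem \ref{thm:bddforbidden class} applied to $\sigma'\in S_{1,1}^0$ yields the boundedness of $T_{\sigma'}^c$ on $F_1^{\alpha-n,c}(\R,\M)$. Composing, for every $f\in F_1^{\alpha,c}(\R,\M)$,
\[
\|T_\sigma^c f\|_{F_1^{\alpha-n,c}}
=\|T_{\sigma'}^c(J^n f)\|_{F_1^{\alpha-n,c}}
\lesssim \|J^n f\|_{F_1^{\alpha-n,c}}
\approx \|f\|_{F_1^{\alpha,c}},
\]
which is the desired mapping property. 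There is no real obstacle here; the only delicate point is the verification that $\sigma'\in S_{1,1}^0$, and it is worth emphasizing that, unlike in Corollary \ref{cor:bdd}, we cannot lift on the other side by composing $J^\alpha T_\sigma^c J^{-\alpha}$, because Proposition \ref{prop: composition} requires $\delta<1$; this is precisely why the hypothesis $\alpha>n$ (rather than merely $\alpha\in\mathbb{R}$) is needed, so that $\alpha-n$ still lies in the positive-smoothness range where Theorem \ref{thm:bddforbidden class} is available.
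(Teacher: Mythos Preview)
Your proof is correct and follows essentially the same approach as the paper, which simply says ``A similar argument as in the proof of Corollary~\ref{cor:bdd}'' and leaves the details implicit. Your explicit verification that $\sigma'\in S_{1,1}^0$ via the Leibniz rule (rather than invoking Proposition~\ref{prop: composition}, which indeed does not apply for $\delta=1$) and your remark explaining why the hypothesis $\alpha>n$ is genuinely needed here are both correct and add useful clarity.
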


\begin{cor}
Let $n,\alpha$ and $ \sigma$ be the same as above, and $1< p<\infty$.  If  $(T_\sigma^c)^*$ admits a symbol in the class $S_{1,1}^n$, then $T^c_{\sigma}$ is bounded from $F_{p}^{\alpha,c}(\R,\M)$ to $F_{p}^{\alpha-n,c}(\R,\M)$.
\end{cor}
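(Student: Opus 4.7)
The plan is to reduce the statement to the preceding corollary (the $p=1$ forbidden $S_{1,1}^n$ boundedness) by combining duality with complex interpolation, in the spirit of Step~4 of the proof of Theorem~\ref{thm:bdd}. The hypothesis that $(T_\sigma^c)^*$ admits a symbol in $S_{1,1}^n$ is precisely what is needed to feed the preceding corollary into both endpoints $p=1$ and $p=\infty$. Note that the Bessel-potential reduction $\sigma \mapsto \sigma(s,\xi)(1+|\xi|^2)^{-n/2}$ used in Corollary~\ref{cor:bdd} cannot be transferred directly, since Proposition~\ref{prop: composition} is unavailable for $\delta = 1$ (its Taylor-remainder analysis requires $\delta < 1$); the interpolation route avoids any symbol-calculus composition formula.

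First I would apply the preceding corollary to $\sigma$ itself to obtain the $F_1$-endpoint: for every $a > \max\{0,n\}$,
\[
T_\sigma^c : F_1^{a,c}(\R,\M) \longrightarrow F_1^{a-n,c}(\R,\M).
\]
Then I would apply the same corollary to the symbol $\tilde\sigma \in S_{1,1}^n$ of $(T_\sigma^c)^*$ provided by the hypothesis, obtaining boundedness of $T_{\tilde\sigma}^c : F_1^{b',c} \to F_1^{b'-n,c}$ for every $b' > \max\{0,n\}$. Taking adjoints, and using both the identity $(T_{\tilde\sigma}^c)^* = T_\sigma^c$ and the duality $(F_1^{\gamma,c})^* = F_\infty^{-\gamma,c}$ recalled in Section~\ref{prelimi}, this will translate, via the substitution $b := n - b'$, into the $F_\infty$-endpoint
\[
T_\sigma^c : F_\infty^{b,c}(\R,\M) \longrightarrow F_\infty^{b-n,c}(\R,\M), \qquad b < \min\{0,n\}.
\]

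Finally I would interpolate via
\[
\bigl(F_\infty^{\alpha_0,c},\, F_1^{\alpha_1,c}\bigr)_{1/p} = F_p^{\alpha,c}, \qquad \alpha = (1 - \tfrac{1}{p})\alpha_0 + \tfrac{\alpha_1}{p},
\]
recalled in Section~\ref{prelimi}. Given any $1<p<\infty$ and any $\alpha \in \mathbb{R}$, I will pick $b < \min\{0,n\}$ sufficiently negative and then set $a := p\alpha - (p-1)b$, so that $\alpha = (1-1/p)b + a/p$ and, for $|b|$ large enough, also $a > \max\{0,n\}$; the shifted exponents $(b-n,\, a-n)$ interpolate along the same parameter $1/p$ to give the target index $\alpha - n$. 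Combining this interpolation with the two endpoint estimates above then yields the desired boundedness of $T_\sigma^c$ from $F_p^{\alpha,c}(\R,\M)$ to $F_p^{\alpha-n,c}(\R,\M)$. No substantial obstacle is anticipated: the only delicate point is the elementary verification that the endpoint parameters $(a,b)$ can always be chosen in their admissible ranges, which is a direct computation.
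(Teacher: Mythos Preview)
Your approach is correct and is essentially the paper's own argument for Theorem~\ref{thm: bdd forbidden p>1} (duality plus interpolation), applied directly at order $n$ rather than after a Bessel-potential reduction to order $0$. The paper merely says the corollary follows by ``a similar argument as in the proof of Corollary~\ref{cor:bdd}'', i.e.\ write $T_\sigma^c = T_{\sigma'}^c J^n$ with $\sigma'(s,\xi)=\sigma(s,\xi)(1+|\xi|^2)^{-n/2}\in S_{1,1}^0$ and then invoke Theorem~\ref{thm: bdd forbidden p>1}. One minor correction to your commentary: this factorization and the membership $\sigma'\in S_{1,1}^0$ do \emph{not} require Proposition~\ref{prop: composition}, since $J^n$ is a Fourier multiplier composed on the right (the symbol is exact, and $\sigma'\in S_{1,1}^0$ is a direct Leibniz-rule computation). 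Where the $\delta=1$ calculus genuinely breaks down is in verifying the hypothesis of Theorem~\ref{thm: bdd forbidden p>1} for the reduced operator, namely that $(T_{\sigma'}^c)^* = J^{-n}T_{\tilde\sigma}^c$ (multiplier on the \emph{left}) lies in $\mathrm{Op}(S_{1,1}^0)$. Your direct interpolation sidesteps this; alternatively one notes that the proof of Theorem~\ref{thm: bdd forbidden p>1} only uses the $F_1^{\gamma,c}$-boundedness of the adjoint for large $\gamma$ (not its symbol), and $J^{-n}T_{\tilde\sigma}^c:F_1^{\gamma,c}\to F_1^{\gamma,c}$ follows from the preceding corollary plus lifting. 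The two routes are thus equivalent in content, yours being slightly more streamlined.
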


\section{Applications}\label{section-app}

The main target of this section is to apply the results obtained previously to pseudo-differential operators over quantum tori. Our strategy is to use the transference method introduced in \cite{N-R} via operator-valued pseudo-differential operators on the usual torus $\T$. Let us begin with the latter case by a periodization argument.

\subsection{Applications to tori}
In this subsection, $\M$ still denotes a von Neumann algebra with a normal semifinite faithful trace $\tau$, but $\N=L_\infty(\T)\overline{\otimes}\M$.

We identify $\T$ with the unit cube $\I=[0, 1)^d$ via $(e^{2 \pi \mathrm{i} s_1},\cdots , e^{2 \pi \mathrm{i} s_d})\leftrightarrow(s_1, \cdots, s_d)$. Under this identification, the addition in $\I$ is the usual addition modulo $1$ coordinatewise; an interval of $\I$ is either a subinterval of $\mathbb{I}$ or a union  $[b, 1] \cup [0, a]$ with $0<a<b<1$, the latter union being the interval $[b-1, a]$ of $\mathbb{I}$ (modulo $1$). So the cubes of $\I$ are exactly those of $\T$.  Accordingly, functions on $\T$ and $\I$ are identified too.

Recall that $ \varphi$ is a Schwartz function satisfying \eqref{condition-phi}. Then for every $m\in \mathbb{Z}^d \setminus \{0\}$,
$$\sum_{j\in \mathbb{Z}}\varphi(2^{-j} m) =\sum_{j\geq 0}\varphi(2^{-j} m)=1.$$
This tells us that in the torus case $\{\varphi(2^{-j} \cdot)\}_{j\geq 0}$ gives a resolvent of the unit. According to this, we make a slight change of the notation that we used before:
 $$\varphi^{(j)} = \varphi(2^{-j}\cdot),\;\forall\, j\geq 0.$$
Let $\varphi_j=\mathcal{F}^{-1}(\varphi^{(j)})$ for any $j\geq 0$. Now we periodize $\varphi_j $ as
$$
\widetilde{\varphi}_j(z)=\sum_{m\in \mathbb{Z}^d}\varphi_j (s+m) \quad \text{with}\quad z=(e^{2\pi {\rm i} s_1},\ldots,e^{2\pi {\rm i} s_d}).
$$
Then, we can easily see that  $\widetilde{\varphi}_j$ admits the following Fourier series:
\begin{equation}\label{eq:same fourier trans}
\widetilde{\varphi}_j(z)=\sum_{m\in \mathbb{Z}^d}\varphi(2^{-j}m)z^m.
\end{equation}
Thus, for any $f\in \mathcal{S}'(\R;L_1(\M)+\M)$, whenever it exists,
$$
\widetilde{\varphi}_j * f(z)=\int_{\T} \widetilde{\varphi}_j (zw^{-1})f(w)dw =\sum_{m\in \mathbb{Z}^d}\varphi(2^{-j}m) \widehat{f}(m) z^m  \quad z\in \T.
$$
The following definition was given in \cite[Section~4.5]{XXY17}.

\begin{defn}
Let $1\leq p<\infty$ and $\alpha\in \R$. The column operator-valued Triebel-Lizorkin space $F_p^{\alpha,c}(\T, \M)$ is defined to be
$$
F_p^{\alpha,c}(\T, \M)=\lbrace f\in \mathcal{S}'(\T;L_1(\M)):\|  f\|  _{F_p^{\alpha,c}}<\infty\rbrace,
$$
where $$
\|  f\|  _{F_p^{\alpha,c}}=\|  \widehat{f}(0)\| _{L_p(\M)}+\big\| (\sum_{j\geq 0}2^{2j\alpha}|  \widetilde{\varphi}_j * f | ^2) ^\frac{1}{2}\big\| _{L_p(\N)}.
$$
\end{defn}
The row and mixture spaces $F_p^{\alpha,r}(\T, \M)$  and $F_p^{\alpha}(\T, \M)$, and the corresponding spaces for $p=\infty$ are defined similarly to the Euclidean case.

By the discussion before \eqref{eq:same fourier trans},  if we identify a function $f$ on $\T$ as a 1-periodic function $f_{\rm{pe}}$ on $\R$, then the convolution $\widetilde{\varphi}_j*f$ on $\T$ coincides with the convolution $\varphi_j*f_{\rm{pe}}$ on $\R$. More precisely:
$$
\widetilde{\varphi}_j*f(z)=\varphi_j*f_{\rm{pe}}(s) \quad \mbox{with}\quad z=(e^{2\pi {\rm i} s_1},\cdots, e^{2\pi {\rm i} s_d}).
$$
By the almost orthogonality of the Littlewood-Paley decomposition given in \eqref{eq:resolution of unity},  we get the following easy equivalent norm of $F_p^{\alpha,c}(\I,\M)$:
\begin{equation*}
\|  f_{\rm{pe}}\| _{F_p^{\alpha,c}(\I,\M)}
\approx  \|{\phi}_0*f_{\rm{pe}}\|_p+ \big\|  (\sum_{j\geq 0}2^{2j\alpha}|  \varphi_{j}*f(z)| ^2)^\frac{1}{2}\big\| _p,
\end{equation*}
where $\widehat{\phi}_0 (\xi) =1-\sum _{j\geq 0} \varphi(2^{-j}\xi) $.
Since $\widehat{\phi}_0$ is supported in $\{\xi: |  \xi |  \leq 1\}$ and $\widehat{\phi}_0(\xi)=1$ if $|  \xi |  \leq \frac{1}{2}$, it then follows that
$$
 \|\check{\phi}_0*f_{\rm{pe}}\|_p=\|  \widehat{f}(0) \| _p\,.
$$
Hence, combining the estimates above, we have
\begin{equation}\label{eq:equi}
\|  f\| _{F_p^{\alpha,c}(\T,\M)}\approx \|  f_{\rm{pe}}\| _{F_p^{\alpha,c}(\I,\M)}
\end{equation}
Thus $F_p^{\alpha,c}(\T, \M)$ embeds into $F_p^{\alpha,c}(\R, \M)$ isomorphically. The equivalence \eqref{eq:equi} allows us to reduce the treatment of $\T$ to that of $\R$; and by periodicity, all the functions considered now are restricted on $\I$.

We are not going to state the properties of $F^{\alpha,c}_p (\T,\M)$ specifically, and refer the reader to \cite[Section~4.5]{XXY17} for similar results on quantum torus.

\medskip

Let us turn to the study of toroidal symbols. In the discrete case, the derivatives degenerate into discrete difference operators.
Let $\sigma:$ $\mathbb{Z}^{d}\rightarrow\mathcal{M}$.
For $1\leq  j\leq d$, let $e_j$ be the $j$-th canonical basis of $\R$.
We define the forward and backward partial difference operators $\Delta_{m_{j}}$
and $\overline{\Delta}_{m_{j}}$: 
$$
\Delta_{m_{j}}\sigma(m):=\sigma(m+e_{j})-\sigma(m),\quad\overline{\Delta}_{m_{j}}\sigma(m):=\sigma(m)-\sigma(m-e_{j}),
$$
and for any $\beta\in\mathbb{N}_{0}^{d}$,
$$
\Delta_{m}^{\beta}:=\Delta_{m_{1}}^{\beta_{1}}\cdots\Delta_{m_{d}}^{\beta_{d}},\quad\overline{\Delta}_{m}^{\beta}:=\overline{\Delta}_{m_{1}}^{\beta_{1}}\cdots\overline{\Delta}_{m_{d}}^{\beta_{d}}.
$$

\begin{defn}
Let $0\leq\delta,\rho\leq1$ and  $\gamma,\beta\in\mathbb{N}_0^{d}$. Then the toroidal symbol class $S^n_{\rho,\delta}(\T \times \mathbb{Z}^d)$ consists of those $\M$-valued functions $\sigma(s,m)$
which are smooth in $s$ for all $m\in \mathbb{Z}^d$, and satisfy
$$
  \|  D_{s}^{\gamma}\Delta_{m}^{\beta}\sigma(s,m) \|  _{\mathcal{M}}\leq C_{\alpha,\beta,m}  (1+  |m | )^{n-\rho  |\beta |_1+\delta  |\gamma |_1}.
$$
\end{defn}

\begin{defn}
Let $\sigma \in S^n_{\rho,\delta}(\T \times \mathbb{Z}^d)$. For any $f\in\mathcal{S}^{\prime}  (\mathbb{T}^{d};L_{1}  (\mathcal{M} ) )$,
we define the corresponding toroidal pseudo-differential operator
as follows:
$$
T^c_{\sigma}f(s)=\sum_{m \in \mathbb{Z}^d} \sigma (s,m)\widehat{f}(m)e^{2\pi {\rm i}s\cdot m}.
$$
\end{defn}
When studying the toroidal pseudo-differential operators $T_\sigma^c$ on $\T$, especially its action on operator-valued Triebel-Lizorkin spaces on $\T$, a very useful tool is to extend the toroidal symbol to a symbol defined on $\T\times \R$, which reduces the torus case to the Euclidean one. This allows us to use the arguments in the last section. The extension of scalar-valued toroidal symbol has been well studied in \cite{R-T}. With some minor modifications, the arguments used in \cite{R-T} can be adjusted to our operator-valued setting.

The following lemma is taken from \cite{R-T}. Denote by $\delta_0(\xi)$ the Kronecker delta function at $0$, i.e., $\delta_0(0)=1$ and $\delta_0(\xi)=0$ if $\xi \neq 0$.
\begin{lem}\label{lem:Ruzhansky}
For each $\beta \in\mathbb{N}_0^d$, there exists a function $\phi_\beta\in \mathcal{S}(\R)$ and a function $\zeta \in \mathcal{S}(\R)$ such that
\begin{gather*}
\sum_{k\in\Z}\zeta (s+k)\equiv 1, \\
\widehat{\zeta }\mid _{\Z} (\xi)=\delta_0(\xi)\quad \mbox{and}\quad D _{\xi}^\beta (\widehat{\zeta })(\xi)=\overline{\Delta}_\xi^\beta \phi_\beta(\xi),
\end{gather*}
for any $\xi \in \R$.
\end{lem}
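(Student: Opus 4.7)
The statement splits naturally into two parts: (a) constructing $\zeta$ that satisfies the first two conditions, and (b) extracting $\phi_\beta$ from $\widehat\zeta$ for the third. I plan to treat these separately, observing that (a) is a standard partition-of-unity argument while (b) rests on a Fourier-analytic computation whose delicacy is the whole point of the lemma.

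For (a), I would take a nonnegative, infinitely differentiable function $\eta$ on $\R$ supported strictly inside $(-1,1)^d$ and positive on a cube around the origin large enough that the $\Z$-translates of $\{\eta>0\}$ cover $\R$. Then $\Phi(s)=\sum_{k\in\Z}\eta(s+k)$ is smooth, strictly positive and $\Z$-periodic, so
\[
\zeta(s)=\frac{\eta(s)}{\Phi(s)}
\]
is a compactly supported smooth function. By direct computation $\sum_{k\in\Z}\zeta(s+k)\equiv 1$, and the Poisson summation formula applied to this identity immediately yields $\widehat\zeta|_{\Z}=\delta_0$.

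For (b), the key idea is to take the Fourier transform, in the variable $\xi$, of the target identity $\overline{\Delta}_\xi^\beta\phi_\beta=D_\xi^\beta\widehat\zeta$. Using $\mathcal F_\xi[\widehat\zeta](s)=\zeta(-s)$ together with the Fourier symbols of $\overline{\Delta}_{\xi_j}$ and $D_{\xi_j}$, the equation forces
\[
\mathcal F_\xi[\phi_\beta](s)=\prod_{j=1}^{d}\Big(\frac{2\pi{\rm i}s_j}{1-e^{-2\pi{\rm i}s_j}}\Big)^{\beta_j}\zeta(-s).
\]
I would then define $\phi_\beta$ as the inverse Fourier transform of this expression, so that the desired identity $\overline{\Delta}_\xi^\beta\phi_\beta=D_\xi^\beta\widehat\zeta$ follows tautologically by Fourier inversion.

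The hard part is showing $\phi_\beta\in\mathcal{S}(\R)$. The prefactor $\big(\frac{2\pi{\rm i}s_j}{1-e^{-2\pi{\rm i}s_j}}\big)^{\beta_j}$ has a pole of order $\beta_j$ at each $s_j\in\mathbb Z\setminus\{0\}$, so the right-hand side is a priori not even well defined. The entire reason for arranging $\zeta$ to be compactly supported strictly inside $(-1,1)^d$ in step (a) is precisely that $\zeta(-s)$ then vanishes identically on a neighbourhood of every pole of the prefactor; consequently the product is a compactly supported smooth function, lies in $\mathcal{S}(\R)$, and its inverse Fourier transform $\phi_\beta$ is therefore Schwartz, as required.
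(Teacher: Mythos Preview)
Your argument is correct. The paper itself does not prove this lemma at all; it simply imports it from Ruzhansky--Turunen with the sentence ``The following lemma is taken from \cite{R-T}.'' What you have written is essentially the construction given there: build $\zeta$ as a smooth partition of unity supported in the open unit cube (so that Poisson summation gives $\widehat\zeta|_{\Z}=\delta_0$), then solve $\overline{\Delta}_\xi^\beta\phi_\beta=D_\xi^\beta\widehat\zeta$ on the Fourier side by dividing by the symbol $\prod_j(1-e^{-2\pi{\rm i}s_j})^{\beta_j}$. Your observation that the quotient $\frac{2\pi{\rm i}s_j}{1-e^{-2\pi{\rm i}s_j}}$ extends smoothly across $s_j=0$ and that the compact support of $\zeta$ inside $(-1,1)^d$ kills the poles at $s_j\in\mathbb{Z}\setminus\{0\}$ is exactly the point; with that, $\mathcal{F}_\xi[\phi_\beta]\in C_c^\infty(\R)$ and hence $\phi_\beta\in\mathcal{S}(\R)$.
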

Now let us give the operator-valued analogue of Theorem 4.5.3 in \cite{R-T}.
\begin{lem}\label{lem: extension}
Let  $0\leq \rho,\delta \leq 1$ and  $n\in \mathbb{R}$. A symbol ${\sigma}\in S^n_{\rho,\delta}(\T \times \Z)$ is a toroidal symbol  if and only if there exists an Euclidean symbol $\widetilde{\sigma} \in S^n_{\rho,\delta}(\T \times \R)$ such that ${\sigma}=\widetilde{\sigma}\mid_{\T\times \Z}$.
\end{lem}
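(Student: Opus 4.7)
The plan is to prove both implications separately; only the forward direction requires a genuine construction, while the reverse direction is routine.

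For the reverse direction, suppose $\widetilde{\sigma}\in S^n_{\rho,\delta}(\T\times\R)$ and set $\sigma:=\widetilde{\sigma}|_{\T\times\Z}$. Smoothness in $s$ transfers immediately, and the discrete differences are controlled by the continuous derivatives via the integral form of the mean value theorem: I will apply $\Delta_{m_j}\sigma(s,m) = \int_0^1 \partial_{\xi_j}\widetilde{\sigma}(s,m+te_j)\,dt$ iteratively, expressing $\Delta_m^\beta\sigma(s,m)$ as an iterated integral of $D_\xi^\beta \widetilde{\sigma}(s,m+\tau)$ over a bounded set of shifts $\tau$. Since $(1+|m+\tau|)\approx(1+|m|)$ uniformly in $\tau$, the H\"ormander bound on $D_s^\gamma D_\xi^\beta \widetilde{\sigma}$ will yield the required toroidal estimate $\|D_s^\gamma \Delta_m^\beta \sigma(s,m)\|_\M \lesssim (1+|m|)^{n+\delta|\gamma|_1-\rho|\beta|_1}$.

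For the forward direction, I will follow the scalar construction of Ruzhansky--Turunen and define the extension via
\[
\widetilde{\sigma}(s,\xi) := \sum_{m\in\Z} \sigma(s,m)\,\widehat{\zeta}(\xi-m),
\]
using the auxiliary Schwartz function $\zeta$ from Lemma \ref{lem:Ruzhansky}. The polynomial bound $\|\sigma(s,m)\|_\M \lesssim (1+|m|)^n$ combined with the Schwartz decay of $\widehat{\zeta}$ makes the series absolutely convergent in operator norm and termwise differentiable, uniformly on bounded sets of $(s,\xi)$. The restriction identity $\widetilde{\sigma}(s,k)=\sigma(s,k)$ for $k\in\Z$ follows immediately from $\widehat{\zeta}|_\Z=\delta_0$. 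The $s$-derivatives of $\widetilde{\sigma}$ only affect $\sigma$ directly, so they pose no difficulty. For the $\xi$-derivatives, I will invoke the identity $D_\xi^\beta \widehat{\zeta} = \overline{\Delta}_\xi^\beta \phi_\beta$ of Lemma \ref{lem:Ruzhansky} to write
\[
D_\xi^\beta \widetilde{\sigma}(s,\xi) = \sum_{m\in\Z} \sigma(s,m)\,\overline{\Delta}_\xi^\beta \phi_\beta(\xi-m),
\]
and then perform a discrete summation by parts in the $m$-variable, transferring the difference operator onto $\sigma$ to obtain $\sum_m \overline{\Delta}_m^\beta \sigma(s,m)\,\phi_\beta(\xi-m)$. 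The toroidal estimate on $\overline{\Delta}_m^\beta\sigma$, the Schwartz decay of $\phi_\beta$, and a Peetre-type inequality $(1+|m|)^{n-\rho|\beta|_1} \lesssim (1+|\xi|)^{n-\rho|\beta|_1}(1+|\xi-m|)^{|n|+\rho|\beta|_1}$ will then combine to yield $\|D_\xi^\beta \widetilde{\sigma}(s,\xi)\|_\M \lesssim (1+|\xi|)^{n-\rho|\beta|_1}$. Mixed derivatives $D_s^\gamma D_\xi^\beta \widetilde{\sigma}$ are handled identically after commuting $D_s^\gamma$ past the summation.

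The principal technical hurdle will be the discrete summation by parts in several dimensions: one must carefully track the index shifts generated by iterating $\overline{\Delta}_\xi$, verify that all boundary terms vanish thanks to the polynomial growth of $\sigma$ being dominated by the rapid decay of $\phi_\beta$, and confirm absolute convergence of the rearranged series in $\M$. Beyond this, the passage from the scalar Ruzhansky--Turunen argument to the present operator-valued setting is essentially automatic: the construction is linear in $\sigma$, every identity involves only finite manipulations at each step, and operator-norm estimates are preserved throughout because the scalar Schwartz-decaying coefficients simply multiply $\M$-valued terms.
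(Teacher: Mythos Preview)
Your proposal is correct and follows essentially the same route as the paper's own proof: the mean value theorem (iterated) for the restriction direction, and for the extension direction the Ruzhansky--Turunen interpolant $\widetilde{\sigma}(s,\xi)=\sum_{m}\widehat{\zeta}(\xi-m)\sigma(s,m)$, the identity $D_\xi^\beta\widehat{\zeta}=\overline{\Delta}_\xi^\beta\phi_\beta$, summation by parts, and a Peetre-type inequality. The only cosmetic difference is that the paper records the transferred difference as $(-1)^{|\beta|_1}\Delta_m^\beta$ rather than $\overline{\Delta}_m^\beta$, which is the same object up to an index shift and hence yields identical bounds.
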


\begin{proof}
We first prove the ``if'' part. Let $\widetilde{\sigma} \in S^n_{\rho,\delta}(\T \times \R)$. If $|  \beta |  _1 =1$, then by the mean value theorem for vector-valued functions, we have
$$
\| \Delta_{m}^{\beta}D_{s}^{\gamma}\sigma(s,m)\|_\M  \leq \sup_{0\leq \theta \leq 1} \big\| \partial _{\xi}^\beta D_s^\gamma\widetilde{\sigma}(s,m+\theta \beta)\big\|_\M  .
$$
For a general multi-index $\beta \in \mathbb{N}_0^d$, we use induction. Writing $\beta=\beta'+\delta_j$ and using the induction hypothesis, we get
\begin{equation*}
\begin{split}
\|  \Delta_{m}^{\beta}D_{s}^{\gamma}{\sigma}(s,m) \|_\M   & = \|  \Delta_{m}^{\delta_j}( \Delta_{m}^{\beta'}D_{s}^{\gamma}\widetilde{\sigma}(s,m) )\| \\
&  \leq  \sup_{0\leq \theta \leq 1} \|  \partial_j (\Delta_{m}^{\beta'}D_{s}^{\gamma}\widetilde{\sigma}(s,m+\theta\delta_j))\|_\M  \\
& =    \sup_{0\leq \theta \leq 1}\|  \Delta_{m}^{\beta'} (\partial_j D_{s}^{\gamma}\widetilde{\sigma}(s,m+\theta\delta_j))\| _\M \\
& \leq   \sup_{0\leq \theta' \leq 1} \|  D_{\xi}^{\beta'} \partial_j D_{s}^{\gamma}\widetilde{\sigma}(s,m+\theta'\beta)\|_\M  \\
& = \sup_{0\leq \theta' \leq 1} \|  D_{\xi}^{\beta}  D_{s}^{\gamma}\widetilde{\sigma}(s,m+\theta'\beta)\|_\M  .
\end{split}
\end{equation*}
 Thus we deduce that
\begin{equation*}
\begin{split}
\|  \Delta_{m}^{\beta}D_{s}^{\gamma}\sigma(s,m) \|_\M   & \leq \sup_{0\leq \theta' \leq 1} \|  D_{\xi}^{\beta}  D_{s}^{\gamma}\widetilde{\sigma}(s,m+\theta'\beta)\|_\M  \\
 &  \leq  C'_{\alpha,\beta,m}(1+|  m| )^{n-\rho| \beta| _1+\delta| \gamma| _1}.
\end{split}
\end{equation*}

Now let us show the ``only if'' part. In the proof of Theorem 4.5.3 in \cite{R-T}, the desired Euclidean symbol is constructed with the help of the functions in Lemma \ref{lem:Ruzhansky}. We can transfer directly the arguments in \cite{R-T} to our setting. But we still include a proof for completeness. Let $\zeta \in \mathcal{S}(\R)$ be as in Lemma \ref{lem:Ruzhansky}. Define a function $\widetilde{\sigma}: \T\times \R \rightarrow \M$ by
$$
\widetilde{\sigma}(s,\xi)=\sum_{m\in \Z}\widehat{\zeta }(\xi-m){\sigma}(s,m).
$$
Thus, ${\sigma}=\widetilde{\sigma}\mid_{\T\times \Z}$. Moreover, using summation by parts, we have
\begin{equation*}
\begin{split}
\|   D_{s}^{\gamma}D_{\xi}^{\beta}\widetilde{\sigma}(s,\xi)\| _\M  &  = \big\| \sum_{m\in \Z}D_{\xi}^{\beta}\widehat{\zeta }(\xi-m)D_s^{\beta}{\sigma}(s,m)\big\|_\M  \\
&  =  \big\|  \sum_{m\in \Z}\overline{\Delta}_\xi^\beta\phi_\beta(\xi-m)D_s^{\gamma}\sigma(s,m)\big\|_\M  \\
&=  \| (-1)^{|  \beta | _1}\sum_{m\in \Z} \phi_\beta(\xi-m)\Delta_m^{\beta}D_s^{\gamma}\sigma(s,m) \|_\M  \\
& \lesssim   \sum_{m\in \Z} |  \phi_{\beta}(\xi-m) |  (1+|  m| )^{n-\rho| \beta| _1 +\delta| \beta| _1}\\
 & \lesssim  \sum_{m \in \Z} |  \phi_{\beta}(\xi-m) |  (1+|  \xi -m| )^{n-\rho| \beta| _1 +\delta| \gamma| _1}(1+|  \xi| )^{n-\rho| \beta| _1 +\delta| \gamma| _1}\\
& \lesssim   (1+|  \xi| )^{n-\rho| \beta| _1 +\delta| \gamma| _1},
\end{split}
\end{equation*}
whence, $\widetilde{\sigma}\in S^n_{\rho,\delta}(\T \times \R)$.
\end{proof}

\begin{thm}\label{thm:tori}
Let $\sigma \in S^0_{1,\delta}(\T \times \mathbb{Z}^d)$ and $\alpha\in \mathbb{R}$. Then
\begin{itemize}
\item  If $0\leq \delta <1$, then $T^c_{\sigma}$ is a bounded operator on $F_p^{\alpha,c}(\T,\M)$ for every $1\leq p \leq \infty$.
\item If $\delta=1$ and $\alpha> 0$, then $T^c_{\sigma}$ is a bounded operator on $F_1^{\alpha,c}(\T,\M)$.
\item If $\delta=1$ and $(T_\sigma^c)^*$ admits a symbol in the class $S_{1,1}^0(\T\times \mathbb{Z}^d)$, then $T^c_{\sigma}$ is bounded on $F_{p}^{\alpha,c}(\T,\M)$ for any  $1< p<\infty$.
\end{itemize}
\end{thm}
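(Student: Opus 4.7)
The proof strategy is pure transference: I would extend the toroidal symbol to a Euclidean one via Lemma~\ref{lem: extension} and then invoke the appropriate Euclidean theorem from Sections~\ref{section-regular}--\ref{section-forbidden}. Given $\sigma\in S_{1,\delta}^0(\T\times\Z)$, Lemma~\ref{lem: extension} produces an extension $\widetilde{\sigma}\in S_{1,\delta}^0(\T\times\R)$, which, regarding $s$ as ranging over all of $\R$, I would view as a symbol in $S_{1,\delta}^0(\R\times\R)$ that is $1$-periodic in $s$. For $f\in F_p^{\alpha,c}(\T,\M)$, let $f_{\mathrm{pe}}$ denote its $1$-periodic extension to $\R$, so that $\widehat{f_{\mathrm{pe}}}=\sum_{m\in\Z}\widehat{f}(m)\delta_m$. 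A direct computation then gives the pointwise identity
\[
T^c_{\widetilde{\sigma}}f_{\mathrm{pe}}(s)=\sum_{m\in\Z}\widetilde{\sigma}(s,m)\widehat{f}(m)e^{2\pi{\rm i}s\cdot m}=(T^c_\sigma f)_{\mathrm{pe}}(s),
\]
which, combined with the equivalence \eqref{eq:equi}, reduces the statement to the local Euclidean estimate $\|T^c_{\widetilde{\sigma}}f_{\mathrm{pe}}\|_{F_p^{\alpha,c}(\I,\M)}\lesssim \|f_{\mathrm{pe}}\|_{F_p^{\alpha,c}(\I,\M)}$.

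To match the hypotheses of the Euclidean theorems (whose inputs must genuinely live in $F_p^{\alpha,c}(\R,\M)$), I would fix a cutoff $\chi\in C_c^\infty(\R)$ with $\chi\equiv 1$ on $4\I$ and $\supp\chi\subset 8\I$, and split
\[
T^c_{\widetilde{\sigma}}f_{\mathrm{pe}}|_{\I}=T^c_{\widetilde{\sigma}}(\chi f_{\mathrm{pe}})|_{\I}+T^c_{\widetilde{\sigma}}((1-\chi)f_{\mathrm{pe}})|_{\I}.
\]
Since $\chi f_{\mathrm{pe}}$ is compactly supported and $f_{\mathrm{pe}}$ is $1$-periodic, covering $\supp\chi$ by finitely many integer translates of $\I$ and invoking the atomic decomposition of Theorem~\ref{thm: atomic decop T_L} yields $\|\chi f_{\mathrm{pe}}\|_{F_p^{\alpha,c}(\R,\M)}\lesssim \|f\|_{F_p^{\alpha,c}(\T,\M)}$; Theorem~\ref{thm:bdd} handles the first piece when $0\leq\delta<1$, Theorem~\ref{thm:bddforbidden class} when $\delta=1$, $p=1$, $\alpha>0$, and Theorem~\ref{thm: bdd forbidden p>1} when $\delta=1$, $1<p<\infty$ under the adjoint hypothesis. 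The tail $(1-\chi)f_{\mathrm{pe}}$ vanishes on a neighborhood of $\I$, so the kernel bounds of Lemmas~\ref{Lem T K} and \ref{lem: dyadic kernel estimate} force rapid off-diagonal decay for $T^c_{\widetilde{\sigma}}((1-\chi)f_{\mathrm{pe}})$ on $\I$; summing the contributions from each translate $k+\I$, $k\in\Z$, weighted by $(1+|k|)^{-N}$ for $N$ sufficiently large, yields the remaining bound.

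The main obstacle is the first displayed inequality for the localized piece $\chi f_{\mathrm{pe}}$: one must verify that pointwise multiplication by the smooth cutoff $\chi$ is bounded from $F_p^{\alpha,c}(\I,\M)$ into $F_p^{\alpha,c}(\R,\M)$, so that the Euclidean theorems can be applied with norm control coming from the toroidal side. This is a ``multiplier-type'' estimate in the spirit of the molecular arguments used in Lemma~\ref{lem:moledule norm} and Corollary~\ref{cor: moledule norm}, and a parallel bookkeeping argument is needed to repackage the rapidly decaying tail into an $F_p^{\alpha,c}(\I,\M)$-norm rather than a mere $L_p$-norm; modulo these localization lemmas, the three cases of Theorem~\ref{thm:tori} follow at once from the corresponding Euclidean results.
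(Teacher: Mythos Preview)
Your approach is essentially the same as the paper's: both extend $\sigma$ to $\widetilde{\sigma}\in S^0_{1,\delta}(\T\times\R)$ via Lemma~\ref{lem: extension}, establish the pointwise identity $T^c_\sigma f(z)=T^c_{\widetilde{\sigma}}f_{\rm pe}(s)$, and then appeal to the Euclidean Theorems~\ref{thm:bdd}, \ref{thm:bddforbidden class}, \ref{thm: bdd forbidden p>1} together with the equivalence~\eqref{eq:equi}. The difference is one of rigor and length. The paper's proof is three lines and simply writes ``apply the Euclidean theorems to $\widetilde{\sigma}$ and $f_{\rm pe}$'', without commenting on the fact that $f_{\rm pe}\notin F_p^{\alpha,c}(\R,\M)$; the authors implicitly treat~\eqref{eq:equi} as saying that $F_p^{\alpha,c}(\T,\M)$ embeds into $F_p^{\alpha,c}(\R,\M)$ and leave the localization to the reader. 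You make this step explicit with the cutoff-plus-tail decomposition, which is a perfectly legitimate (and arguably more honest) route.

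Two remarks. First, your ``main obstacle'' --- boundedness of multiplication by $\chi\in C_c^\infty$ on $F_p^{\alpha,c}$ --- is itself a special case of Theorem~\ref{thm:bdd}, since $\chi(s)$ is a symbol in $S^0_{1,0}$; combined with the $1$-periodicity of $\widetilde{\sigma}$ in $s$ (so that $T^c_{\widetilde{\sigma}}$ commutes with integer translations), this makes the localization shorter than your outline suggests. Second, for the third bullet you should note that the hypothesis on $(T^c_\sigma)^*$ transfers: if $(T^c_\sigma)^*$ has a toroidal symbol in $S^0_{1,1}(\T\times\Z)$, then extending that symbol by Lemma~\ref{lem: extension} gives a Euclidean symbol for $(T^c_{\widetilde{\sigma}})^*$ in $S^0_{1,1}$, so Theorem~\ref{thm: bdd forbidden p>1} applies. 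Both the paper and your proposal leave this verification implicit.
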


\begin{proof}
By Lemma \ref{lem: extension}, there exists $\widetilde{\sigma}$ in $S^n_{\rho,\delta}(\T \times \R)$ such that ${\sigma}=\widetilde{\sigma}\mid_{\T\times \Z}$. Let $f\in F_p^{\alpha,c}(\T,\M)$.
By the identification $\T \approx \I$, for any $z\in \T$, there exists $s\in \I$ such that
\begin{equation*}
\begin{split}
T^c_{\sigma} f(z) &=  \sum_{m\in \Z}\sigma(s,m)\widehat{f}(m)e^{2\pi {\rm i}s\cdot m}\\
&=  \int _{\R} \widetilde{\sigma}(s,\xi)\widehat{f}_{\rm{pe}}(\xi)e^{2\pi {\rm i} s\cdot \xi}d\xi = T^c_{\widetilde{\sigma}}{f}_{\rm{pe}}(s).
\end{split}
\end{equation*}
Now we apply Theorems \ref{thm:bdd}, \ref{thm:bddforbidden class} and \ref{thm: bdd forbidden p>1} to the symbol $\widetilde{\sigma}$ and $f_{\rm{pe}}$. Then by the equivalence \eqref{eq:equi}, we get the boundedness of $T_\sigma^c$ on $F_p^{\alpha,c}(\T,\M)$.
\end{proof}

\subsection{Applications to quantum tori}
We now apply the above results to the quantum case. To this end, we first recall the relevant definitions.
Let $d\geq 2$ and $\theta=(\theta_{kj})$ be a real skew symmetric $d\times d$-matrix. The associated $d$-dimensional noncommutative
torus $\mathcal{A}_{\theta}$ is the universal $C^*$-algebra generated by $d$
unitary operators $U_1, \ldots, U_d$ satisfying the following commutation
relation
 $$U_k U_j = e^{2 \pi \mathrm{i} \theta_{k j}} U_j U_k,\quad j,k=1,\ldots, d.$$
We will use standard notation from multiple Fourier series. Let  $U=(U_1,\cdots, U_d)$. For $m=(m_1,\cdots,m_d)\in\mathbb{Z}^d$, define
 $$U^m=U_1^{m_1}\cdots U_d^{m_d}.$$
 A polynomial in $U$ is a finite sum
  $$
   x =\sum_{m \in \Z}\alpha_{m} U^{m}\quad \text{with}\quad
 \alpha_{m} \in \mathbb{C}.
 $$
The involution algebra
$\mathcal{P}_{\theta}$ of all such polynomials is
dense in $\mathcal{A}_{\theta}$. For any polynomial $x$ as above, we define
$$
\tau(x)=\alpha_0.
$$
Then $\tau$ extends to a  faithful  tracial state $\tau$ on $\mathcal{A}_{\theta}$.  Let  $\T_{\theta}$ be the $w^*$-closure of $\mathcal{A}_{\theta}$ in  the GNS representation of $\tau$. This is our $d$-dimensional quantum torus. The state $\tau$ extends to a normal faithful tracial state on $\T_{\theta}$ that will be denoted again by $\tau$.  Note that if $\theta=0$, then  $\T_{\theta}=L_\infty(\T)$ and $\tau$ coincides with the integral on $\T$ against normalized Haar measure $dz$.

Any $x\in L_1(\T_\theta)$ admits a formal
Fourier series:
 $$
 x \sim \sum_{m \in\Z} \widehat{x} ( m ) U^{m}\;\text{ with }\; \widehat{x}( m) = \tau((U^m)^*x).
 $$

In \cite{N-R},  a transference method has been introduced to overcome the full noncommutativity of quantum tori and to use methods of operator-valued harmonic analysis.  Let $\mathcal{N}_{\theta} = L_{\infty} (\T) {\overline{\otimes}}\T_{\theta}$, equipped with the tensor trace $\nu = \int dz {\otimes} \tau$. For each $z
\in \T,$ define $\pi_{z}$ to be the isomorphism of
$\T_{\theta}$ determined by
\begin{equation}\label{pi_z}
 \pi_{z} (U^{m}) = z^{m} U^{m } = z_1^{m_1} \cdots z_d^{m_d}U_1^{m_1} \cdots U_d^{m_d}.
 \end{equation}
This isomorphism preserves the
trace $\tau$. Thus for every $1\leq p < \infty$,
 $$
 \|\pi_{z} (x) \|_p = \|x\|_p,\; \forall\, x\in L_p(\mathbb{T}^d_{\theta}).
 $$

The main points of the transference method are contained in the following lemma from \cite{CXY2013}.
\begin{lem}\label{prop:TransLp}
 \begin{enumerate}[\rm(1)]
\item  Let $1\leq  p \leq \infty$. For any $x \in L_p (\T_{\theta})$, the function
$\widetilde{x}: z \mapsto \pi_z(x)$ is continuous from
$\T$ to $L_p(\T_{\theta})$ $($with respect to the w*-topology for $p=\infty)$.
\item If $x\in L_p (\T_{\theta}),$ then $\widetilde{x} \in L_p (\N_{\theta})$ and $\|  \widetilde{x} \| _p = \|  x\| _p,$ that is, $x \mapsto \widetilde{x}$ is an isometric embedding from $L_p (\T_{\theta})$ into $L_p (\N_{\theta})$.

\item Let $\widetilde{\T_{\theta}} = \{ \widetilde{x}: x \in \T_{\theta}\}.$ Then $\widetilde{\T_{\theta}}$ is a von Neumann subalgebra of $\mathcal{N}_{\theta}$ and the associated conditional expectation is given by
$$
\mathbb{E} (f)(z) = \pi_{z} \Big ( \int_{\T} \pi_{\overline{w}}
 \big [ f( w )\big ] dw \Big ),\quad z\in\T, \; f \in \N_{\theta}.$$
Moreover, $\mathbb{E}$ extends to a contractive projection from $L_p(\N_{\theta})$ onto $L_p(\widetilde{\T_{\theta}})$ for $1\leq p\leq \infty$.
\end{enumerate}
\end{lem}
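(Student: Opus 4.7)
The plan is to establish the three assertions in the natural order (1) $\Rightarrow$ (2) $\Rightarrow$ (3), reducing each to the case of Fourier polynomials by density/approximation and then invoking the defining formula \eqref{pi_z}.

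First, for (1), I would start on the polynomial algebra $\mathcal{P}_\theta$, where continuity is transparent: if $x = \sum_{|m|\leq N}\alpha_m U^m$ then $\pi_z(x) = \sum_{|m|\leq N}\alpha_m z^m U^m$ depends continuously (in fact smoothly) on $z$ in any $L_p$-norm. For general $x \in L_p(\T_\theta)$ with $1 \leq p < \infty$, I approximate $x$ by a sequence of polynomials $x_n$ (using e.g.\ Fej\'er means of the Fourier series, which converge in $L_p$-norm since these are contractive multipliers on $L_p(\T_\theta)$). Because each $\pi_z$ is a trace-preserving $*$-isomorphism, $\|\pi_z(x) - \pi_z(x_n)\|_p = \|x - x_n\|_p$ uniformly in $z$, so $\widetilde{x}$ is the uniform limit of continuous functions, hence continuous. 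For $p = \infty$, I would argue via w*-continuity: use duality $L_\infty(\T_\theta) = L_1(\T_\theta)^*$ and verify that for any $y \in L_1(\T_\theta)$ the map $z \mapsto \tau(\pi_z(x)y)$ is continuous, reducing once more to the polynomial case by density of $\mathcal{P}_\theta$ in $L_1(\T_\theta)$ and the isometric action of $\pi_z$ there.

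For (2), the key identity is tracial: since $\tau \circ \pi_z = \tau$, Fubini gives
\begin{equation*}
\|\widetilde{x}\|_{L_p(\N_\theta)}^p = \int_\T \tau\bigl(|\pi_z(x)|^p\bigr)\, dz = \int_\T \tau\bigl(\pi_z(|x|^p)\bigr)\, dz = \tau(|x|^p) = \|x\|_p^p
\end{equation*}
for $1 \leq p < \infty$; the case $p = \infty$ follows from w*-lower semicontinuity (or direct inspection, since $\|\pi_z(x)\|_\infty = \|x\|_\infty$ for every $z$). Measurability of $\widetilde x$, required to make sense of the integral, is ensured by (1).

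For (3), that $\widetilde{\T_\theta}$ is a $*$-subalgebra is immediate from $\pi_z$ being a $*$-homomorphism; w*-closedness follows from (2) applied to $p = \infty$ together with the fact that the image of a w*-continuous isometric embedding is w*-closed on bounded sets (Krein--Smulian / Alaoglu). To identify the conditional expectation, I would first check the formula on the dense algebra of trigonometric polynomials $f(z) = \sum_m z^m \otimes x_m$ in $\N_\theta$: a direct computation using $\pi_w(U^m) = w^m U^m$ yields
\begin{equation*}
\int_\T \pi_{\overline w}[f(w)]\, dw = \sum_m \int_\T \overline{w}^m w^m\, dw \cdot x_m = \sum_m x_m\,,
\end{equation*}
but the correct bookkeeping gives $\pi_z$ applied to $\sum_m \widehat{x_m}(0) U^m$-type terms; then applying $\pi_z$ on the outside shows $\mathbb{E}(f)(z) = \pi_z(y)$ for a unique $y \in \T_\theta$, i.e.\ $\mathbb{E}(f) \in \widetilde{\T_\theta}$, and $\mathbb{E}(\widetilde{x}) = \widetilde{x}$ when $f = \widetilde{x}$. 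Extending by continuity using the contractivity estimate
\begin{equation*}
\|\mathbb{E}(f)(z)\|_p \leq \int_\T \|\pi_{\overline w}(f(w))\|_p\, dw = \int_\T \|f(w)\|_p\, dw
\end{equation*}
(combined with Jensen/H\"older on $\N_\theta$) gives the contractivity of $\mathbb{E}$ on every $L_p(\N_\theta)$, and the two characterizing properties of a conditional expectation (module property and idempotency onto $L_p(\widetilde{\T_\theta})$) are checked first on polynomials and then by density.

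I expect the main technical obstacle to be the rigorous handling of part (3): verifying the module property $\mathbb{E}(\widetilde{y}f\widetilde{z}) = \widetilde{y}\,\mathbb{E}(f)\,\widetilde{z}$ and showing that the twisted integral formula really lands in $\widetilde{\T_\theta}$ (not merely in $\N_\theta$) requires carefully exploiting the intertwining relation $\pi_z \pi_w = \pi_{zw}$ together with translation invariance of Haar measure on $\T$. Once this algebraic identity is in hand, everything else is a density/isometry argument.
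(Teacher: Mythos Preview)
The paper does not actually prove this lemma: it is quoted verbatim from \cite{CXY2013} (Chen--Xu--Yin), so there is no ``paper's own proof'' to compare against. Your sketch is essentially the standard argument one finds in that reference, and the overall architecture --- density of polynomials, trace-preservation of $\pi_z$, and the intertwining relation $\pi_z\pi_w=\pi_{zw}$ --- is correct.

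One small wrinkle in your part (3): the displayed computation on trigonometric polynomials is garbled (you write ``but the correct bookkeeping gives\ldots'' without actually giving it). The clean way to see that $\mathbb{E}$ lands in $\widetilde{\T_\theta}$ is not to expand $f$ in Fourier series on $\T$, but to observe directly that $y:=\int_{\T}\pi_{\overline w}[f(w)]\,dw$ is an element of $\T_\theta$ (or $L_p(\T_\theta)$) independent of $z$, so that $\mathbb{E}(f)(z)=\pi_z(y)=\widetilde{y}(z)$. The projection property then follows immediately from $\pi_{\overline w}\pi_w=\mathrm{id}$, and the module property from $\pi_{\overline w}[\pi_w(a)f(w)\pi_w(b)]=a\,\pi_{\overline w}[f(w)]\,b$. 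Once you have these algebraic identities and trace preservation $\tau\circ\pi_z=\tau$, the conditional-expectation axioms are verified and the $L_p$-contractivity is automatic; there is no need for a separate density argument at that stage.
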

To avoid complicated notation, we will use the same notation for the derivation for the quantum tori $\T_\theta$ as for functions on $\T$. For every $1\leq j\leq d$, define the derivation to be the  operator $\partial_j$ satisfying:
$$
\partial_j(U_j)=2\pi {\rm i} U_j \quad \mbox{and} \quad \partial_j(U_k)=0 \mbox{ for } k\neq j.
$$
Given $m\in \mathbb{N}_0^d$, the associated partial derivation $D^m$ is $\partial _1^{m_1} \cdots \partial_d^{m_d}$. We keep using the resolvent of unit given by functions in \eqref{eq:same fourier trans}. The Fourier multiplier on $\T_\theta$ with symbol $\varphi(2^{-j}\cdot)$ is then
 $$
\widetilde{\varphi}_j * x =\sum_{m\in \mathbb{Z}^d}\varphi(2^{-j}m)\, \widehat{x}(m) U^m   .
$$

The analogue of Schwartz class on the quantum torus is given by
$$
\mathcal{S}(\T_\theta)=\{ \sum_{m\in \Z}a_mU^m: \{a_m\}_{m\in \Z} \text{ rapidly decreasing}\}.
$$
This is a $w^*$-dense $*$-subalgebra of $\T_\theta$ and contains all polynomials. It is equipped with a structure of Fr\'echet $*$-algebra, and has a locally convex topology induced by a family of semi-norms.
We denote the tempered distribution on $\T_\theta$ by $\mathcal{S}'(\T_\theta)$ which is the space of all continuous linear functional on $\mathcal{S}(\T_\theta)$.
Then by duality, both partial derivations and the Fourier transform extend to $\mathcal{S}'({\T_\theta})$. Triebel-Lizorkin spaces on the quantum torus are defined and  well studied in \cite{XXY17}. Let us recall the definition.

\begin{defn}
Let $1\leq p<\infty$ and $\alpha\in \R$.
The column Triebel-Lizorkin space $F_p^{\alpha,c}(\T_\theta)$ is defined by
$$
F_p^{\alpha,c}(\T_\theta)=\{ x\in \mathcal{S}'(\T_\theta): \|  x \| _{F_p^{\alpha,c}}<\infty\},
$$
where
$$
\|  x \| _{F_p^{\alpha,c}}= |  \widehat{x}(0) |  +\big\|  (\sum_{j\geq 0}2^{2j\alpha}|  \widetilde{\varphi}_j*x|  ^2)^\frac{1}{2} \big\| _p.
$$
The row space $F_p^{\alpha,r}(\T_\theta)$ and mixture space $F_p^{\alpha}(\T_\theta)$, and the case $p=\infty$ are then defined similarly.
\end{defn}

The transference method in Lemma \ref{prop:TransLp} allows us to connect $F_p^{\alpha}(\T_\theta)$ with operator-valued spaces $F_p^{\alpha,c}(\T,\T_\theta)$. The result is

\begin{lem}\label{lem:iden q to t}
For any $1\leq p< \infty$, the map $x\mapsto \widetilde{x}$ extends to an isometric embedding from $F_p^{\alpha,c}(\T_\theta)$ to $F_p^{\alpha,c}(\T,\T_\theta)$ with complemented image.
\end{lem}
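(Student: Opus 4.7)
My plan is to verify the isometric property by a direct Fourier-side computation, and then produce a complementing projection from the conditional expectation $\mathbb{E}$ of Lemma~\ref{prop:TransLp}.

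First I would expand everything via Fourier series. Since $\pi_z$ is an isomorphism satisfying \eqref{pi_z}, $\widetilde{x}(z)=\pi_{z}(x)=\sum_{m}\widehat{x}(m)\,z^{m}U^{m}$. Applying the Fourier multiplier $\widetilde{\varphi}_j*$ on $\mathbb{T}^d$ (which acts only on the $z$-variable) and using \eqref{eq:same fourier trans}, I would get the key identity
\begin{equation*}
\widetilde{\varphi}_j*\widetilde{x}(z)\;=\;\sum_{m}\varphi(2^{-j}m)\,\widehat{x}(m)\,z^{m}U^{m}\;=\;\pi_{z}\bigl(\widetilde{\varphi}_j* x\bigr).
\end{equation*}
Consequently $|\widetilde{\varphi}_j*\widetilde{x}(z)|^{2}=\pi_{z}(|\widetilde{\varphi}_j* x|^{2})$, and since $\pi_{z}$ is a trace-preserving $*$-automorphism, it commutes with the continuous functional calculus used to form $(\sum_j 2^{2j\alpha}|\cdot|^{2})^{1/2}$. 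Integrating the $p$-th power in $z$ and using $\|\pi_{z}(y)\|_{L_p(\mathbb{T}_\theta)}=\|y\|_{L_p(\mathbb{T}_\theta)}$ then gives
$$\bigl\|(\sum_{j\geq 0}2^{2j\alpha}|\widetilde{\varphi}_j*\widetilde{x}|^{2})^{1/2}\bigr\|_{L_p(\mathcal{N}_\theta)}=\bigl\|(\sum_{j\geq 0}2^{2j\alpha}|\widetilde{\varphi}_j* x|^{2})^{1/2}\bigr\|_{L_p(\mathbb{T}_\theta)}.$$
A simple check that $\widehat{\widetilde{x}}(0)=\widehat{x}(0)\cdot\mathbf{1}$ (only the $m=0$ term survives $\int_{\mathbb{T}^d}z^{m}dz$) matches the low-frequency pieces of the two norms, so $x\mapsto\widetilde{x}$ is an isometry from $F_p^{\alpha,c}(\mathbb{T}_\theta)$ into $F_p^{\alpha,c}(\mathbb{T}^d,\mathbb{T}_\theta)$.

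For the complementation, I would use the conditional expectation $\mathbb{E}$ of Lemma~\ref{prop:TransLp}. The first step is to verify the commutation $\mathbb{E}(\widetilde{\varphi}_j*f)=\widetilde{\varphi}_j*\mathbb{E}(f)$: plugging the formula $\mathbb{E}(f)(z)=\pi_{z}(\int_{\mathbb{T}^d}\pi_{\overline{w}}[f(w)]\,dw)$ into $\widetilde{\varphi}_j*f(w)=\sum_{m}\varphi(2^{-j}m)\widehat{f}(m)w^{m}$ and using $\pi_{\overline{w}}(U^{n})=\overline{w}^{n}U^{n}$, the integration in $w$ selects only the diagonal Fourier modes, yielding exactly $\widetilde{\varphi}_j*\widetilde{y}(z)$ where $\widetilde{y}=\mathbb{E}(f)$. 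Hence $\mathbb{E}$ intertwines the Littlewood--Paley decomposition.

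Finally, I would promote $\mathbb{E}$ to a bounded projection on $F_p^{\alpha,c}(\mathbb{T}^d,\mathbb{T}_\theta)$. Since $\mathbb{E}$ is a normal conditional expectation, it is completely positive and contractive, so $\mathbb{E}\otimes\mathrm{id}_{B(\ell_{2})}$ is still contractive on $L_p(B(\ell_{2})\,\overline{\otimes}\,\mathcal{N}_\theta)$ and therefore bounded on the column subspace $L_p(\mathcal{N}_\theta;\ell_{2}^{c})$. Applying this to the sequence $(2^{j\alpha}\widetilde{\varphi}_j*f)_{j\geq 0}$, together with the intertwining identity above, yields
$$\|\mathbb{E}(f)\|_{F_p^{\alpha,c}(\mathbb{T}^d,\mathbb{T}_\theta)}\;\lesssim\;\|f\|_{F_p^{\alpha,c}(\mathbb{T}^d,\mathbb{T}_\theta)}.$$
Since $\mathbb{E}(\widetilde{x})=\widetilde{x}$, $\mathbb{E}$ is a bounded projection onto the image of the embedding, proving complementation. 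The main obstacle I anticipate is the passage of $\mathbb{E}$ to the column space that appears inside the Triebel--Lizorkin norm; I would handle it by the standard completely-bounded/tensoring trick just indicated, which is legitimate because $\mathbb{E}$ is a conditional expectation rather than a general contraction.
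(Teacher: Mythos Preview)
Your proposal is correct. The paper does not include a proof of this lemma; it is stated as a direct consequence of the transference mechanism in Lemma~\ref{prop:TransLp} (the detailed argument is carried out in the cited memoir \cite{XXY17}). Your argument is exactly the standard one expected here: the intertwining identity $\widetilde{\varphi}_j*\widetilde{x}(z)=\pi_z(\widetilde{\varphi}_j* x)$ combined with the trace-preserving property of $\pi_z$ gives the isometry, and the conditional expectation $\mathbb{E}$, which commutes with Fourier multipliers and extends contractively to the column space $L_p(\mathcal{N}_\theta;\ell_2^c)$, furnishes the complementing projection.
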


Let us introduce toroidal symbol classes and pseudo-differential operators on $\T_{\theta}$. The following definitions were also given in \cite{L-N-P}.

\begin{defn}
Let $0\leq\delta,\rho\leq1$, $n\in \mathbb{R}$ and  $\gamma,\beta\in\mathbb{N}_0^{d}$ be multi-indices.  Then the toroidal symbol class $S^n_{\T_{\theta},\rho,\delta}(\Z)$ consists of those functions $\sigma: \Z \rightarrow \T_{\theta}$ which satisfy
$$
\|D^{\beta} (\Delta_{m}^{\gamma}\sigma(m))\|  \leq C_{\beta,\gamma}  (1+  |m  |  )^{n-\rho  |\gamma  |_1+\delta  |\beta  |_1}, \quad \forall\, m\in \Z.
$$
\end{defn}

\begin{defn}
Let $\sigma \in S^n_{\T_{\theta},\rho,\delta}(\Z)$. For any $x\in \T_{\theta}$,
we define the corresponding toroidal pseudo-differential operator on $\T_{\theta}$
as follows:
$$
T^c_{\sigma}x=\sum_{m \in \Z}\sigma (m)\widehat{x}(m)U^m.
$$
\end{defn}

Now we are ready to prove the mapping property of pseudo-differential operators on quantum torus.

 \begin{thm}
Let $\sigma \in S^0_{\T_{\theta},1,\delta}(\Z)$ and $\alpha\in \mathbb{R}$. Then
\begin{itemize}
\item If $0\leq \delta <1$, then $T^c_{\sigma}$ is a bounded operator on $F_p^{\alpha,c}(\T_\theta)$ for every $1\leq p \leq \infty$.
\item If $\delta=1$ and $\alpha> 0$, then $T^c_{\sigma}$ is a bounded operator on $F_1^{\alpha,c}(\T_\theta)$.
\item If $\delta=1$ and $(T_\sigma^c)^*$ admits a symbol in the class $S^0_{\T_{\theta},1,\delta}(\Z)$, then $T^c_{\sigma}$ is bounded on $F_{p}^{\alpha,c}(\T_\theta)$ for any  $1< p<\infty$.
\end{itemize}
\end{thm}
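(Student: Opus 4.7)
The plan is to reduce the quantum torus statement to the operator-valued torus statement (Theorem \ref{thm:tori}) via the transference method of Lemma \ref{prop:TransLp} together with the isometric embedding in Lemma \ref{lem:iden q to t}. Concretely, I would take $\M = \T_\theta$ and lift the $\T_\theta$-valued symbol $\sigma: \Z \to \T_\theta$ to a $\T_\theta$-valued symbol on the usual torus by
$$ \widetilde{\sigma}(z,m) = \pi_z\bigl(\sigma(m)\bigr), \qquad z \in \T, \; m\in \Z, $$
where $\pi_z$ is the isomorphism defined in \eqref{pi_z}.

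First I would verify that $\widetilde{\sigma} \in S^0_{1,\delta}(\T \times \Z)$ with values in $\M=\T_\theta$, in the sense of the operator-valued toroidal class defined before Theorem \ref{thm:tori}. The key identity is $D^\gamma_s \pi_z(U^k) = (2\pi \mathrm{i} k)^\gamma z^k U^k = \pi_z(D^\gamma U^k)$, so that differentiating $\widetilde{\sigma}$ in the spatial variable $s$ simply corresponds to applying the intrinsic derivation $D^\gamma$ of $\T_\theta$ to $\sigma(m)$ inside $\pi_z$; similarly $\Delta^\beta_m$ commutes with $\pi_z$. Since each $\pi_z$ is a trace-preserving $*$-automorphism of $\T_\theta$ (in particular isometric on $\T_\theta$), the symbol estimates on $\sigma$ transfer verbatim to $\widetilde{\sigma}$, with the same constants and the same admissible regimes of $\delta$.

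Next I would establish the intertwining identity
$$ \widetilde{T^c_\sigma x}(z) = T^c_{\widetilde{\sigma}} \widetilde{x}(z), \qquad x \in \mathcal{P}_\theta, $$
by direct Fourier computation: both sides equal $\sum_m \pi_z(\sigma(m))\, \widehat{x}(m)\, z^m U^m$. Combined with Lemma \ref{lem:iden q to t}, which identifies $F_p^{\alpha,c}(\T_\theta)$ isometrically with a complemented subspace of $F_p^{\alpha,c}(\T, \T_\theta)$ via $x \mapsto \widetilde{x}$, this reduces the desired boundedness of $T^c_\sigma$ on $F_p^{\alpha,c}(\T_\theta)$ to the boundedness of $T^c_{\widetilde{\sigma}}$ on $F_p^{\alpha,c}(\T, \T_\theta)$. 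All three bullets then follow from the three corresponding bullets of Theorem \ref{thm:tori}. For the third bullet ($\delta = 1$, $1 < p < \infty$), the hypothesis that $(T_\sigma^c)^*$ admits a symbol in $S^0_{\T_\theta, 1, 1}(\Z)$ lifts in the same way, since adjunction commutes with the $\pi_z$-conjugation and the lifted adjoint symbol $\pi_z(\sigma^*(m))$ still sits in $S^0_{1,1}(\T \times \Z)$ with values in $\T_\theta$.

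No serious obstacle is expected: the whole argument is a transference, and the only point requiring minor care is checking that the $s$-derivatives of $\widetilde{\sigma}$ truly produce $\delta|\gamma|_1$ growth in $|m|$ (rather than the naive $|k|^{|\gamma|_1}$ one might fear from differentiating the Fourier expansion of $\sigma(m)$ term-by-term). This is precisely what the identification $D_s \circ \pi_z = \pi_z \circ D$ resolves, so the proof is essentially a clean application of the tools already developed.
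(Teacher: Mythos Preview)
Your proposal is correct and follows essentially the same route as the paper: define $\widetilde{\sigma}(z,m)=\pi_z(\sigma(m))$, use $D_s\circ\pi_z=\pi_z\circ D$ to verify $\widetilde{\sigma}\in S^0_{1,\delta}(\T\times\Z)$, establish the intertwining $\widetilde{T^c_\sigma x}=T^c_{\widetilde{\sigma}}\widetilde{x}$, and invoke Lemma~\ref{lem:iden q to t} together with Theorem~\ref{thm:tori}. Your explicit remark on how the adjoint hypothesis in the third bullet transfers is a small addition the paper leaves implicit.
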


\begin{proof}
Recall that $\pi_z$ denotes the isomorphism of $\T_\theta$ determined by \eqref{pi_z}.
We claim that, given $m\in \Z$, the function $z\mapsto \pi_z(\sigma(m))$ from $\T$ to $\T_\theta$ satisfies
\begin{equation}\label{eq:q to t}
\|  D_z^\gamma \Delta_m^\beta \pi_z(\sigma(m)) \|  \leq C_{\gamma,\beta}(1+|  m| )^{n+\delta |  \gamma | _1 -\rho |  \beta | _1}.
\end{equation}
Since $\pi_z$ commutes with the derivations on $\T_\theta$, we have $D^\gamma \Delta^\beta \pi_z\sigma(m)=\pi_z(D^\gamma \Delta^\beta \sigma(m))$. Therefore,
\begin{equation*}
\|  D^\gamma \Delta^\beta \pi_z\sigma(m) \|  = \|  \pi_z(D^\gamma \Delta^\beta \sigma(m)) \|  \leq \|  D^\gamma \Delta^\beta \sigma(m) \|  \leq C_{\gamma,\beta}(1+|  m| )^{n+\delta |  \gamma | _1 -\rho |  \beta | _1}.
\end{equation*}
Denote $\widetilde{\sigma}(z,m)=\pi_z(\sigma(m))$ for $(z,m)\in \T\times\Z$ and consider the pseudo-differential operator $T^c_{\widetilde{\sigma}}$. Combining \eqref{eq:q to t} and Theorem \ref{thm:tori}, we obtain the boundedness of $T^c_{\widetilde{\sigma}}$ on $F_p^{\alpha,c}(\T,\T_\theta)$. Moreover, for any polynomial $x$ on $\T_\theta$ and $f(z)=\pi_{z}(x)$, we have
\begin{equation*}
\begin{split}
T^c_{\widetilde{\sigma}}f(z)&=  \sum_{m\in \Z}\widetilde{\sigma}(z,m)\widehat{f}(m)z^m\\
& =  \sum_{m\in \Z}\pi_z(\sigma(m))\widehat{x}(m)U^mz^m\\
& =  \sum_{m\in \Z}\pi_z(\sigma(m)\widehat{x}(m)U^m)=\pi_z(T^c_{\sigma}(x)).
\end{split}
\end{equation*}
Finally, by Lemma \ref{lem:iden q to t} and Theorem \ref{thm:tori}, we have
\begin{equation*}
\begin{split}
\|  T^c_{\sigma}(x)\| _{F_p^{\alpha,c}(\T_\theta)} &=\|  \pi_\cdot (T^c_{\sigma}(x))\| _{F_p^{\alpha,c}(\T,\T_\theta)}=\|  T^c_{\widetilde{\sigma}}f\| _{F_p^{\alpha,c}(\T,\T_\theta)}\\
&\lesssim \|  f\| _{F_p^{\alpha,c}(\T,\T_\theta)}= \|  x\| _{F_p^{\alpha,c}(\T_\theta)}.
\end{split}
\end{equation*}
All the three assertions are proved. 
\end{proof}

Finally, let $S^n_{\rho,\delta}(\Z)$ be the scalar-valued toroidal symbol class, consisting of those functions $\sigma: \Z \rightarrow \mathbb{C}$ which satisfy
$$
|D^{\beta} (\Delta_{m}^{\gamma}\sigma(m))|  \leq C_{\beta,\gamma}  (1+  |m  |  )^{n-\rho  |\gamma  |_1+\delta  |\beta  |_1}, \quad \forall\, m\in \Z.
$$
In this setting, it is evident that $T_\sigma^c$ and $T_\sigma^r$ give the same pseudo-differential operator on $\mathbb T_\theta^d$, denoted by $T_\sigma$ simply. Then, we have the following

 \begin{cor}
Let $\sigma \in S^0_{1,\delta}(\Z)$ and $\alpha\in \mathbb{R}$. Then
\begin{itemize}
\item If $0\leq \delta <1$, then $T_{\sigma}$ is a bounded operator on $F_p^{\alpha,c}(\T_\theta)$, $F_p^{\alpha,r}(\T_\theta)$ and $F_p^{\alpha}(\T_\theta)$ for every $1\leq p \leq \infty$.
\item If $\delta=1$ and $\alpha> 0$, then $T_{\sigma}$ is a bounded operator on $F_1^{\alpha,c}(\T_\theta)$, $F_1^{\alpha,r}(\T_\theta)$ and $F_1^{\alpha}(\T_\theta)$.
\end{itemize}
\end{cor}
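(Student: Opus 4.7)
The plan is to reduce everything to the previous theorem. The key elementary observation is that when $\sigma$ is scalar-valued, $T_\sigma^c$ and $T_\sigma^r$ coincide on $\mathbb{T}_\theta^d$: for any $x=\sum_m \widehat{x}(m)U^m$, the scalar $\sigma(m)$ commutes with $\widehat{x}(m)U^m$, so $T_\sigma^c x=\sum_m \sigma(m)\widehat{x}(m)U^m=\sum_m \widehat{x}(m)\sigma(m)U^m=T_\sigma^r x$. This is the justification for denoting the common operator by $T_\sigma$. In particular, any scalar-valued symbol in $S^0_{1,\delta}(\mathbb{Z}^d)$ belongs to $S^0_{\mathbb{T}_\theta,1,\delta}(\mathbb{Z}^d)$ (viewing $\mathbb{C}\subset \mathbb{T}_\theta$ as the subalgebra of scalars), so the previous theorem yields the boundedness of $T_\sigma=T_\sigma^c$ on $F_p^{\alpha,c}(\mathbb{T}_\theta^d)$ under each of the two hypotheses on $(\delta,\alpha,p)$. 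This handles the column case.

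For the row case, I would use the canonical isometry $x\mapsto x^*$ from $F_p^{\alpha,r}(\mathbb{T}_\theta^d)$ to $F_p^{\alpha,c}(\mathbb{T}_\theta^d)$. A direct computation with Fourier coefficients, using $(U^m)^* = U^{-m}$ (up to a scalar phase that gets absorbed into the Fourier coefficients), shows $\widehat{x^*}(m)=\overline{\widehat{x}(-m)}$ and consequently
\[
(T_\sigma x)^* = T_{\widetilde{\sigma}} (x^*), \qquad \text{where } \widetilde{\sigma}(m)=\overline{\sigma(-m)}.
\]
One checks immediately from the definition that $\widetilde{\sigma}\in S^0_{1,\delta}(\mathbb{Z}^d)$ whenever $\sigma\in S^0_{1,\delta}(\mathbb{Z}^d)$, since the difference operators $\Delta^\gamma$ on $\widetilde{\sigma}$ reduce, up to signs and conjugation, to those on $\sigma$ evaluated at $-m$, and the weight $(1+|m|)$ is symmetric in the sign of $m$. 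Applying the column result to $\widetilde\sigma$ gives
\[
\|T_\sigma x\|_{F_p^{\alpha,r}} = \|(T_\sigma x)^*\|_{F_p^{\alpha,c}} = \|T_{\widetilde\sigma}(x^*)\|_{F_p^{\alpha,c}} \lesssim \|x^*\|_{F_p^{\alpha,c}} = \|x\|_{F_p^{\alpha,r}},
\]
as required.

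For the mixture space $F_p^\alpha(\mathbb{T}_\theta^d)$, boundedness on both column and row spaces implies boundedness on their sum (for $1\leq p\leq 2$) and intersection (for $2<p<\infty$) by the definition of the mixture norm. No further argument is needed. The corollary thus follows by combining the previous theorem with the elementary involution argument sketched above; there is no real obstacle beyond verifying the identity $(T_\sigma x)^* = T_{\widetilde\sigma} x^*$ and the symbol-class stability $\sigma\in S^0_{1,\delta}\Rightarrow\widetilde\sigma\in S^0_{1,\delta}$, both of which are routine.
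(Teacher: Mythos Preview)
Your proof is correct and follows essentially the same reasoning the paper intends: the paper simply remarks that for scalar-valued $\sigma$ one has $T_\sigma^c=T_\sigma^r=T_\sigma$, and then states the corollary without further argument, implicitly relying on the column result (the preceding theorem) together with its row analogue. Your treatment is more explicit in one respect: rather than invoking the parallel row theory, you reduce the row case back to the column case via the involution $x\mapsto x^*$ and the reflected symbol $\widetilde\sigma(m)=\overline{\sigma(-m)}$, checking that $\widetilde\sigma$ stays in $S^0_{1,\delta}(\mathbb{Z}^d)$ and that $(T_\sigma x)^*=T_{\widetilde\sigma}(x^*)$; this is precisely how one would justify the ``parallel row theory'' anyway, so the two approaches coincide.
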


\noindent{\bf Acknowledgements.} The authors are greatly indebted to Professor Quanhua Xu for having suggested to them the subject of this paper, for many helpful discussions and very careful reading of this paper. The authors are partially supported by the the National Natural Science Foundation of China (grant no. 11301401).

\end{document}